\documentclass[12pt]{amsart}

\usepackage[utf8]{inputenc}
\usepackage[T1,T2A]{fontenc}
\usepackage[english]{babel}

\textwidth=16.0cm \oddsidemargin=0.2cm \evensidemargin=0.2cm
\topmargin=-0.5cm \textheight=22.5cm

\usepackage{todonotes}

\usepackage{amssymb}
\usepackage{amsmath}
\usepackage{amsthm}
\usepackage{multirow}
\usepackage{longtable}
\usepackage{enumitem}
\usepackage{array}
\usepackage[hidelinks]{hyperref}
\usepackage[all]{xy}
\usepackage{caption}
\usepackage{float}

\newcommand{\CC}{\mathbb{C}}
\newcommand{\ZZ}{\mathbb{Z}}
\newcommand{\QQ}{\mathbb{Q}}

\newcommand{\CCC}{\mathfrak{C}}
\newcommand{\g}{\mathfrak{g}}
\renewcommand{\phi}{\varphi}
\newcommand{\eps}{\varepsilon}

\newcommand{\Sym}{\mathrm{S}}
\newcommand{\Ima}{\operatorname{Im}}
\newcommand{\rk}{\operatorname{rk}}
\newcommand{\Hom}{\operatorname{Hom}}
\newcommand{\tr}{\operatorname{tr}}
\newcommand{\cone}{\operatorname{cone}}

\newcommand{\GL}{\operatorname{GL}}
\newcommand{\Sp}{\operatorname{Sp}}
\newcommand{\SL}{\operatorname{SL}}
\newcommand{\SO}{\operatorname{SO}}
\newcommand{\Spin}{\operatorname{Spin}}
\newcommand{\Gtwo}{\operatorname{G}_2}
\newcommand{\E}{\operatorname{E}}

\newcommand{\gl}{\mathfrak{gl}}

\newcommand{\pairing}[2]{\langle#1,\,#2\rangle}
\newcommand{\dotprod}[2]{(#1,\,#2)}
\newcommand{\symplprod}[2]{\{#1,\,#2\}}
\newcommand{\floortwo}[1]{{\lfloor{#1}\rfloor}_2}

\newcommand{\transpose}[1]{{#1}^{\intercal}}

\newtheorem{theorem}{Theorem}[section]
\newtheorem*{theorem*}{Theorem}
\newtheorem{lemma}[theorem]{Lemma}
\newtheorem{corollary}[theorem]{Corollary}

\theoremstyle{remark}
\newtheorem{remark}[theorem]{Remark}

\theoremstyle{definition}
\newtheorem{example}{Example}[section]
\newtheorem*{example*}{Example}

\newcounter{num}[table]
\newcommand{\newcase}{\refstepcounter{num}\arabic{num}}

\renewcommand{\tabcolsep}{3pt}

\begin{document}

\date{}
\title[Orbits of spherical representations and Pyasetskii duality]{Orbits of spherical representations and Pyasetskii duality}

\author{Daniil Shunin}
\address{
Lomonosov Moscow State University, Moscow, Russia
\newline\indent HSE University, Moscow, Russia}
\email{sshunindaniil@gmail.com}
\thanks{This article is an output of a research project implemented as part of the Basic Research Program at HSE University}

\keywords{spherical representation, spherical module, Pyasetskii duality, orbit, conormal bundle}

\begin{abstract}
Dual representations $V$ and $V^*$ of a complex connected algebraic group $G$ simultaneously have either infinitely or finitely many orbits. Whenever the latter holds, the orbits in $V$ and $V^*$ are in a bijective correspondence called \textit{Pyasetskii duality}~\cite{Py}. We obtain a complete description of this duality in the case of spherical representations.
\end{abstract}

\maketitle


\section{Introduction}

Let $G$ be a connected linear algebraic group defined over the field $\CC$. It was shown in~\cite{Py} that a finite-dimensional representation $V$ of $G$ and its dual $V^*$ both contain either finitely or infinitely many orbits. In the first case there is a bijection between the orbits in mutually dual representations called \textit{Pyasetskii duality}.

From now on, $G$ is assumed to have finitely many orbits in $V$. The reduced zero fiber of the moment map $\mu: V\oplus V^* \to\g^*$ is called the \textit{commuting variety}, that is,
\begin{equation*}
\CCC = \mu^{-1}(0)_{red} = \{(v, v^*)\mid v^*\in(\g v)^\perp\} = \{(v, v^*)\mid v\in(\g v^*)^\perp\}.
\end{equation*}
It is an unmixed $G$-stable closed subvariety of dimension $\dim V$. Note that $\CCC\cap(O\times V^*)$ is the total space $N^*O$ of the conormal bundle of an orbit $O\subset V$. Then $\CCC$ is the union $\cup N^*O$ over all orbits $O$ in $V$ (by symmetry, in $V^*$). The following theorem explains the role of the irreducible components of the commuting variety:

\begin{theorem}[{\cite{Py}}]
Suppose $G$ has finitely many orbits in $V$ and $\CCC=\cup_i \CCC_i$ is the irreducible decomposition. Then

1. $\dim \CCC_i = \dim V$ for each i;

2. the projection of $\CCC$ onto the first (resp. second) factor sets up a one-to-one correspondence between the irreducible components of $\CCC$ and the $G$-orbits in $V$ (resp. $V^*$). Namely, each $\CCC_i$ arises as the closure of $N^*O$ for a unique orbit $O$ in $V$ (by symmetry, $Q$ in $V^*$).
\end{theorem}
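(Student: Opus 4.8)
The plan is to reduce everything to the two facts already recalled: $\CCC$ is closed, and $\CCC=\bigcup_O N^*O$, where $O$ runs over the \emph{finitely many} $G$-orbits in $V$ and $N^*O=\CCC\cap(O\times V^*)$ is the total space of the conormal bundle of $O$. First I would record the elementary properties of a single piece $N^*O$. Since $G$ is connected, $O$ is irreducible and smooth, so $N^*O$ is a vector bundle over $O$ of rank $\dim V-\dim O$; hence its total space is irreducible, of dimension $\dim O+(\dim V-\dim O)=\dim V$, and this dimension is the \emph{same} for every orbit. Moreover $N^*O=\CCC\cap(O\times V^*)$ is the intersection of the closed set $\CCC$ with the locally closed set $O\times V^*$, so $N^*O$ is locally closed in $V\oplus V^*$, i.e. open and dense in its closure $\overline{N^*O}$. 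Taking closures in the finite union $\CCC=\bigcup_O N^*O$ and using $\overline{\CCC}=\CCC$ together with $\overline{N^*O}\subseteq\CCC$ gives $\CCC=\bigcup_O\overline{N^*O}$, a finite union of irreducible closed subsets each of dimension exactly $\dim V$.

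Next I would extract parts~1 and~2. From $\CCC=\bigcup_O\overline{N^*O}$ with each $\overline{N^*O}$ irreducible and closed, the irreducible components of $\CCC$ are precisely the maximal members of the family $\{\overline{N^*O}\}_O$ under inclusion. But a proper closed subset of an irreducible variety has strictly smaller dimension, and all the $\overline{N^*O}$ have the common dimension $\dim V$; therefore none is properly contained in another, so every $\overline{N^*O}$ is an irreducible component and every component arises this way. In particular $\dim\CCC_i=\dim V$ for each $i$, which is part~1. To finish part~2 it remains to see that $O\mapsto\overline{N^*O}$ is injective: if $\overline{N^*O}=\overline{N^*O'}$, then the two nonempty open subsets $N^*O$ and $N^*O'$ of this irreducible variety must meet, but applying the first projection $\pi_1\colon\CCC\to V$, which maps $N^*O$ onto $O$ and $N^*O'$ onto $O'$, forces $O\cap O'\neq\varnothing$, hence $O=O'$. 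So $O\mapsto\overline{N^*O}$ is a bijection from the orbits in $V$ onto the irreducible components of $\CCC$.

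It then remains to phrase this bijection through the projection and to invoke symmetry. The image $\pi_1(\overline{N^*O})$ is a $G$-stable constructible subset of $\overline{O}$ containing $O$, so its closure is $\overline{O}$ and $O$ is the unique open $G$-orbit in $\overline{\pi_1(\overline{N^*O})}$; thus a component $\CCC_i$ is the closure of $N^*O$ for the orbit $O$ recovered as the open orbit in $\overline{\pi_1(\CCC_i)}$, and this orbit is unique. The statement for the second factor is obtained by the $V\leftrightarrow V^{*}$ symmetry: the two displayed descriptions of $\CCC$ show that the swap of factors identifies $\CCC$ with the commuting variety of the pair $(V^{*},V)$, so the same argument applied with $V$ replaced by $V^{*}$ yields the bijection between the components $\CCC_i$ and the orbits $Q$ in $V^{*}$.

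Granting the input recalled from the setup, the argument is essentially formal; the point to handle with care is the uniform dimension count $\dim N^*O=\dim V$ valid for \emph{every} orbit, because it is exactly this equidimensionality that prevents one $\overline{N^*O}$ from containing another and hence makes the identification with the set of irreducible components work. If one wanted a self-contained proof, the genuine obstacle would instead be the reverse inclusion behind $\CCC=\bigcup_O N^*O$ and the bound $\dim\CCC\le\dim V$, where the finiteness of the orbit set is indispensable.
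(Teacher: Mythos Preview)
The paper does not supply its own proof of this theorem; it is stated with a citation to~\cite{Py} and used as background. So there is nothing in the paper to compare your argument against line by line.

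That said, your argument is correct and is essentially the standard proof (and close to Pyasetskii's original one): each $N^*O$ is an irreducible vector bundle of total dimension $\dim V$, the finite union $\CCC=\bigcup_O\overline{N^*O}$ consists of equidimensional irreducible closed subsets, hence none is redundant, and injectivity follows by projecting an intersection of dense opens. The paper's preamble already grants you the inputs you use ($\CCC$ closed of dimension $\dim V$, and $\CCC=\bigcup_O N^*O$), so your remark that these are the only nontrivial ingredients is apt.
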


The orbits $O\subset V$ and $Q\subset V^*$ defined by the condition $\overline{N^*O}=\overline{N^*Q}=\CCC_i$ are called (\textit{Pyasetskii-}) \textit{dual} and denoted by $O^\vee=Q$. Observe that both $\{0\}\times V^*$ and $V\times\{0\}$ occur as irreducible components of $\CCC$, so the zero orbit is always dual to the open one.

While this orbit correspondence may be quite complicated in general, a complete answer for a special case was obtained in~\cite{Pan}.

\begin{theorem}[\cite{Pan}]
\label{thm: Pan}
Let $\mathfrak{l}$ be a simple $\ZZ$-graded Lie algebra with only three nonzero parts $\mathfrak{l}=\mathfrak{l}_{-1}\oplus\mathfrak{l}_0\oplus\mathfrak{l}_1$. Denote by $G=L_0$ a connected group corresponding to $\mathfrak{l}_0$ and $V=\mathfrak{l}_1$. Then

1. the representation $G : V$ is irreducible;

2. there are finitely many orbits $O_0,\dots,O_r$ in $V$, and $O_i\subset\overline{O}_j$ if and only if $i\leq j$;

3. there are finitely many orbits $Q_0,\dots,Q_r$ in $V^*$, and $Q_i\subset\overline{Q}_j$ if and only if $i\leq j$;

4. $(O_i)^\vee=Q_{r-i}$ in the Pyasetskii duality.
\end{theorem}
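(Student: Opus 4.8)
\emph{Strategy.} Parts 1--3 are the classical structure theory of a simple Lie algebra carrying a $\ZZ$-grading with abelian nilradical; part 4 is the point, and I would reduce it to a statement about ranks of commuting pairs $(v,w)\in\mathfrak{l}_1\times\mathfrak{l}_{-1}$ and then read it off from the numerology of Pyasetskii duality. For part 1: if $0\neq W\subsetneq\mathfrak{l}_1$ were an $\mathfrak{l}_0$-submodule then, using $[\mathfrak{l}_{\pm1},\mathfrak{l}_{\pm1}]=0$ (since $\mathfrak{l}_{\pm2}=0$), the subspace $[[\mathfrak{l}_{-1},W],\mathfrak{l}_{-1}]\oplus[\mathfrak{l}_{-1},W]\oplus W$ would be a proper nonzero graded ideal of $\mathfrak{l}$ (its degree-one part is $W\subsetneq\mathfrak{l}_1$), contradicting simplicity; hence $\mathfrak{l}_1$ is $\mathfrak{l}_0$-irreducible. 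For parts 2--3: every nonzero $v\in\mathfrak{l}_1$ is nilpotent in $\mathfrak{l}$ and, by a graded version of the Jacobson--Morozov theorem, sits in an $\mathfrak{sl}_2$-triple $\{v,h,f\}$ with $h\in\mathfrak{l}_0$, $f\in\mathfrak{l}_{-1}$; finiteness of the $L_0$-orbits in $\mathfrak{l}_1$ then follows from finiteness of the nilpotent $G$-orbits in $\mathfrak{l}$ together with finiteness for parabolic subalgebras with abelian nilradical. I would then invoke the classical normal form for such gradings: there is an integer $r$ and a system of $r$ mutually commuting (``strongly orthogonal'') triples $\{e_m,h_m,f_m\}$, $1\leq m\leq r$, with $e_m\in\mathfrak{l}_1$, $f_m\in\mathfrak{l}_{-1}$, $h_m\in\mathfrak{l}_0$, such that the orbits in $\mathfrak{l}_1$ are exactly $O_i=L_0\cdot(e_1+\dots+e_i)$, $i=0,\dots,r$, with $O_i\subset\overline O_j\iff i\leq j$ (scale the summands $e_m$, $m>i$, to $0$ inside $\overline{O_j}$); the same holds on $\mathfrak{l}_{-1}\cong V^*$ with $Q_j=L_0\cdot(f_1+\dots+f_j)$.

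\emph{Reduction of part 4.} Identify $V^*$ with $\mathfrak{l}_{-1}$ via the Killing form $\kappa$, which restricts to a perfect pairing $\mathfrak{l}_1\times\mathfrak{l}_{-1}\to\CC$ and to a nondegenerate form on $\mathfrak{l}_0$. For $v\in\mathfrak{l}_1$, $w\in\mathfrak{l}_{-1}$ and $x\in\mathfrak{l}_0$ one has $\kappa([x,v],w)=\kappa(x,[v,w])$, and as $[v,w]\in\mathfrak{l}_0$ with $\kappa|_{\mathfrak{l}_0}$ nondegenerate, $w\in(\g v)^\perp$ iff $[v,w]=0$; hence
\[
\CCC=\{(v,w)\in\mathfrak{l}_1\oplus\mathfrak{l}_{-1}\mid[v,w]=0\},\qquad N^*O_i=\{(v,w)\mid v\in O_i,\ [v,w]=0\},
\]
and the conormal bundle $N^*O_i\to O_i$ has fibre $\mathfrak{z}_{\mathfrak{l}_{-1}}(v)$ over $v$. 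By the theorem of~\cite{Py}, $\overline{N^*O_i}$ is an irreducible component of $\CCC$, hence equals $\overline{N^*Q_{j(i)}}$ for a unique $j(i)$, and $\overline{Q_{j(i)}}$ is the closure of the image of $\overline{N^*O_i}$ under the projection to $\mathfrak{l}_{-1}$. A generic point of $N^*O_i$ is $(v_0,w_0)$ with $v_0\in O_i$ fixed and $w_0$ a generic element of the linear space $\mathfrak{z}_{\mathfrak{l}_{-1}}(v_0)$; since $Q_0\subset\dots\subset Q_r$ are the rank strata and $\{\,\rk\geq k\,\}$ is open, $w_0$ has a well-defined rank and $j(i)$ is exactly that rank --- the generic rank of an element of $\mathfrak{z}_{\mathfrak{l}_{-1}}(v_0)$.

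\emph{Conclusion.} Take $v_0=e_1+\dots+e_i$; then $w=f_{i+1}+\dots+f_r$ lies in $\mathfrak{z}_{\mathfrak{l}_{-1}}(v_0)$ because triples with disjoint indices commute, and $w\in Q_{r-i}$ by the normal form on $\mathfrak{l}_{-1}$, so the linear space $\mathfrak{z}_{\mathfrak{l}_{-1}}(v_0)$ meets the open set $\{\,\rk\geq r-i\,\}$ and its generic element has rank $\geq r-i$; thus $j(i)\geq r-i$ for every $i$. Finally $i\mapsto j(i)$ is a bijection of $\{0,1,\dots,r\}$ onto itself, as Pyasetskii duality matches the $r+1$ orbits in $V$ bijectively with the $r+1$ orbits in $V^*$; summing $j(i)\geq r-i$ over $i$ and using $\sum_i j(i)=\sum_i i=\sum_i(r-i)$ forces $j(i)=r-i$ for all $i$, i.e. $(O_i)^\vee=Q_{r-i}$. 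The main obstacle here is not part 4 but making parts 2 and 3 fully rigorous --- producing the strongly orthogonal system adapted to the grading and verifying the closure order --- since the well-definedness of the generic rank and the lower bound $j(i)\geq r-i$ rest on it; I expect this to need either a case-by-case analysis of the few such gradings (rectangular, symmetric and antisymmetric matrices, a nondegenerate quadratic space, and two exceptional modules) or the general theory of $\mathfrak{sl}_2$-triples in $\ZZ$-graded simple Lie algebras.
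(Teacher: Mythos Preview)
The paper does not give a proof of this theorem: it is quoted from \cite{Pan} and used as input in \S\,\ref{s: proof_for_irreducible}. So there is no in-paper argument to compare against directly.

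That said, your proposal for part~4 is correct and rather neat. The identification of $\CCC$ with $\{(v,w)\in\mathfrak{l}_1\times\mathfrak{l}_{-1}\mid[v,w]=0\}$ via the Killing form is standard, and your observation that $j(i)$ is the generic rank on the linear space $\mathfrak z_{\mathfrak{l}_{-1}}(v_0)$ is exactly the content of Lemma~\ref{lemma: reformulation}. The pigeonhole step is the nice part: once you exhibit $f_{i+1}+\dots+f_r\in\mathfrak z_{\mathfrak{l}_{-1}}(v_0)\cap Q_{r-i}$, openness of $\{\rk\geq r-i\}$ gives $j(i)\geq r-i$, and then bijectivity of $i\mapsto j(i)$ together with $\sum j(i)=\sum i=\sum(r-i)$ forces equality. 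This avoids having to compute the centralizer $\mathfrak z_{\mathfrak{l}_{-1}}(v_0)$ precisely, which is what a direct attack (and, essentially, Panyushev's original argument via $\mathfrak{sl}_2$-weight decompositions) would do.

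You have correctly located the real cost: everything rests on the normal form in parts~2--3, i.e.\ on the existence of a maximal system of strongly orthogonal $\mathfrak{sl}_2$-triples adapted to the grading, with orbit representatives $e_1+\dots+e_i$ and linear closure order. This is classical (Harish-Chandra's strongly orthogonal roots for hermitian symmetric pairs, or the $\theta$-group theory of Vinberg/Kac), but it is not free; your sketch of part~1 is fine, and your honest remark that parts~2--3 would require either the general graded Jacobson--Morozov machinery or the short case-by-case check is accurate.
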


The actions arising in this way are called \textit{Abelian}, they form a special case of a more general class of spherical representations. A finite-dimensional representation $V$ of a connected reductive group $G$ is called \textit{spherical} (\textit{multiplicity-free}) if the multiplicity of any simple $G$-module in the coordinate algebra $\CC[V]$ is at most one. As finiteness of the number of orbits is one of numerous significant properties of such representations, they possess Pyasetskii duality.

A complete classification of all spherical modules was obtained in~\cite{Kac,BeRa,Lea}. An important step of this classification is to reduce the class of representations in question. The terminology introduced below follows Knop, see~\cite[\S\,5]{Kn_MFS} (groups are \textit{not} assumed to be reductive).

Consider two representations $\rho_1: G_1\to\GL(V_1)$ and $\rho_2: G_2\to\GL(V_2)$. We say that the pairs $(G_1,V_1)$ and $(G_2,V_2)$ are \textit{geometrically equivalent} if there is an isomorphism $\phi: V_1\to V_2$ identifying groups $\rho_1(G_1)$ and $\rho_2(G_2)$. Notice that this implies geometrical equivalence of $(G_1,V_1^*)$ and $(G_2,V_2^*)$. Next, a representation $(G,V)$ is \textit{decomposable} if it is geometrically equivalent to a representation of the form $(G_1\times G_2,V_1\oplus V_2)$ with non-zero representations $V_1$ and $V_2$ of the corresponding groups $G_1$ and $G_2$. Otherwise, it is called \textit{indecomposable}. Finally, a representation $\rho: G\to\GL(V)$ is called \textit{saturated} if the dimension of the center of $\rho(G)$ equals the number of irreducible summands of $V$.
\begin{remark}
\label{rm: equivalent_orbits}
In the case where $G$ is reductive, representations $(G,V)$ and $(G,V^*)$ are geometrically equivalent. Identifying the spaces of mutually dual representations one may consider the Pyasetskii duality as a pairing between $G$-orbits in $V$. We retain the notation $O^\vee\subset V$ for the orbit dual to $O\subset V$.
\end{remark}
The study of an arbitrary spherical representation may be reduced to that of indecomposable and saturated ones (listed in~\cite[\S 5]{Kn_MFS}). Further we show that the problem of describing Pyasetskii duality complies with this reduction.

Our aim is to determine pairs of dual orbits in any spherical representation. Clearly, it suffices to handle only one representation in each class of geometrical equivalence. Next, every spherical representation of a connected reductive group $G'$ is obtained by restriction from a sum of indecomposable saturated representations of a bigger group $G$ normalizing $G'$ (cf.~\cite{Lea}). The following lemma shows that the construction of Pyasetskii for $(G,V)$ and $(G',V)$ leads to the same duality correspondence.

\begin{lemma}
\label{lemma: same_orbits}
Let $V$ be a representation of a connected group $G$, and $G'$ be its normal subgroup with finitely many orbits in $V$. Then $G$ and $G'$ have the same orbits.
\end{lemma}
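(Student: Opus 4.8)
The plan is to exploit normality of $G'$. Since $gG'g^{-1}=G'$ for every $g\in G$, we have $g\cdot(G'v)=(gG'g^{-1})\cdot(gv)=G'(gv)$, so the $G$-action on $V$ permutes the $G'$-orbits, of which there are finitely many by hypothesis. It then suffices to prove that this permutation action is trivial: once every $G'$-orbit is $G$-stable, an arbitrary $G$-orbit $Gv$ contains the $G$-stable set $G'v$ and, being the smallest $G$-stable subset containing $v$, must coincide with $G'v$.

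To show the action on the set of $G'$-orbits is trivial, fix a $G'$-orbit $Y=G'v$ and let $H=\{g\in G:\ gY=Y\}$ be its stabilizer, a subgroup of $G$ containing $G'$. Since $G$ carries $G'$-orbits to $G'$-orbits and there are only finitely many of them, the $G$-orbit of $Y$ is finite, so $H$ has finite index in $G$. The key step is to check that $H$ is closed. Using that orbits of algebraic group actions are locally closed, $Y$ is open in its closure $\overline{Y}$ and the boundary $\partial Y=\overline{Y}\setminus Y$ is closed in $V$; since each $g\in G$ acts on $V$ as a linear automorphism, the condition $gY=Y$ is equivalent to the pair of conditions $g\overline{Y}=\overline{Y}$ and $g\,\partial Y=\partial Y$. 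Hence $H=\mathrm{Stab}_G(\overline{Y})\cap\mathrm{Stab}_G(\partial Y)$, and the stabilizer in $G$ of any closed subset $Z\subseteq V$ is a closed subgroup, because $\{g:\ gZ\subseteq Z\}=\bigcap_{z\in Z}\{g:\ gz\in Z\}$ is an intersection of closed subsets of $G$ and $\mathrm{Stab}_G(Z)=\{g:\ gZ\subseteq Z\}\cap\{g:\ g^{-1}Z\subseteq Z\}$.

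Finally, a closed subgroup of finite index in a connected group is the whole group: finitely many cosets of $H$ give a closed partition of $G$, so $H$ is open as well, and connectedness of $G$ forces $H=G$. Therefore $Y$ is $G$-stable for every $G'$-orbit $Y$, and the reduction in the first paragraph finishes the proof.

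The one point that needs genuine care — rather than being purely formal — is the closedness of the orbit stabilizer $H$; everything else is bookkeeping once one knows that $G$ permutes the finite set of $G'$-orbits and that orbits are locally closed.
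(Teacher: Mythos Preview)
Your proof is correct and shares the paper's core idea: normality of $G'$ makes $G$ permute the finitely many $G'$-orbits, and connectedness of $G$ forces this permutation to be trivial. The difference lies in how triviality is extracted. You argue globally: the stabilizer $H\subset G$ of a $G'$-orbit is a closed subgroup of finite index, hence open, hence all of $G$. The paper argues inside a single $G$-orbit $O$: the $G'$-orbits contained in $O$ are permuted transitively by $G$, hence pairwise isomorphic and in particular equidimensional; since there are finitely many and $O$ is irreducible, none can be a proper subvariety, so there is only one. Your route is a bit longer but makes the topology explicit (the care you take with closedness of $H$ via $\overline{Y}$ and $\partial Y$ is the honest work here), whereas the paper's dimension argument is terser but leaves the reader to unpack why equidimensional locally closed pieces of an irreducible variety cannot be proper. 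Both are standard and equally valid.
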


We provide the next simple lemma which readily implies that the problem is reduced to describing the orbit duality only for indecomposable saturated representations.
\begin{lemma}
\label{lemma: duality_and_product}
Let $(G_1,V_1)$ and $(G_2,V_2)$ be two representations of connected groups with finite number of orbits. Then $(O_1\times O_2)^\vee = O_1^\vee\times O_2^\vee$ for any $G_i$-obits $O_i\in V_i$, $i=1,2$.
\end{lemma}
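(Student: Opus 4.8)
The plan is to show that the commuting variety of the product representation $(G_1\times G_2,\,V_1\oplus V_2)$ is, up to a permutation of direct summands, the product of the commuting varieties $\CCC'$ of $(G_1,V_1)$ and $\CCC''$ of $(G_2,V_2)$, and then to read off the duality from Pyasetskii's theorem applied to all three commuting varieties. Note first that $G:=G_1\times G_2$ is connected and has finitely many orbits in $V:=V_1\oplus V_2$ (each of them is a product $O_1\times O_2$ of a $G_1$-orbit and a $G_2$-orbit), so the Pyasetskii construction applies.

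The key point is a local computation: for $v=(v_1,v_2)\in V$ the tangent space to the $G$-orbit is $\g v=\g_1 v_1\oplus\g_2 v_2$, since $\g_i$ acts trivially on the other summand. Hence $(\g v)^\perp=(\g_1 v_1)^\perp\oplus(\g_2 v_2)^\perp$ inside $V^*=V_1^*\oplus V_2^*$, and the canonical isomorphism $V_1\oplus V_2\oplus V_1^*\oplus V_2^*\cong(V_1\oplus V_1^*)\oplus(V_2\oplus V_2^*)$ carries $\CCC$ onto $\CCC'\times\CCC''$. The same isomorphism carries $N^*(O_1\times O_2)$ onto $N^*O_1\times N^*O_2$, and therefore $\overline{N^*(O_1\times O_2)}$ onto $\overline{N^*O_1}\times\overline{N^*O_2}$, because closure commutes with direct products of varieties over $\CC$.

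Next I would invoke that over an algebraically closed field the irreducible components of a product $X\times Y$ are exactly the products $X_a\times Y_b$ of irreducible components of the two factors (a product of irreducible varieties being irreducible). Combined with Pyasetskii's theorem, which identifies the components of $\CCC'$ and $\CCC''$ with the closures $\overline{N^*O_1}$ and $\overline{N^*O_2}$ over the orbits of $G_1$ and $G_2$, this shows that the irreducible components of $\CCC=\CCC'\times\CCC''$ are precisely the varieties $\overline{N^*O_1}\times\overline{N^*O_2}=\overline{N^*(O_1\times O_2)}$.

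To conclude, project the component $\overline{N^*O_1}\times\overline{N^*O_2}$ onto the $V^*$-factor: by Pyasetskii's theorem for $\CCC'$ and $\CCC''$ it equals $\overline{N^*O_1^\vee}\times\overline{N^*O_2^\vee}=\overline{N^*(O_1^\vee\times O_2^\vee)}$. Since this same component is $\overline{N^*(O_1\times O_2)}$, the defining property of Pyasetskii duality yields $(O_1\times O_2)^\vee=O_1^\vee\times O_2^\vee$. The only ingredient that is not pure bookkeeping with the perpendicular decomposition is the behaviour of irreducible components under direct products, and that is the step I would expect to need an explicit reference or a short separate argument.
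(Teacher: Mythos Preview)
Your proof is correct and rests on the same key observation as the paper's, namely that $N^*(O_1\times O_2)=N^*O_1\times N^*O_2$ after the obvious reshuffling of factors. The paper, however, finishes in one line without invoking the decomposition of $\CCC$ or the structure of irreducible components of a product: since $\overline{N^*O_i}=\overline{N^*O_i^\vee}$ by definition of duality, the sets $N^*O_i$ and $N^*O_i^\vee$ have dense intersection, and a product of dense intersections is again a dense intersection; hence $N^*(O_1\times O_2)$ and $N^*(O_1^\vee\times O_2^\vee)$ have the same closure. Your route through $\CCC=\CCC'\times\CCC''$ and the component decomposition of a product is perfectly valid, but the step you flagged as needing a reference is exactly the one the paper sidesteps by working directly with closures and density.
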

In what follows, we use ``orbit diagram'' as a synonym for ``Hasse diagram for the poset of orbit closures''. Below is the main result of the paper.

\begin{theorem}
\label{thm: main_theorem}
Orbit diagram and dual orbits for every (up to geometric equivalence) indecomposable saturated \textup{(a)} irreducible and \textup{(b)} reducible spherical representation are listed in \textup{Tables}~\textup{\ref{table: irreducible_modules}} and~\textup{\ref{table: reducible_modules}}, respectively.
\end{theorem}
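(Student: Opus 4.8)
The plan is to carry out the verification representation by representation, working through the finite list of indecomposable saturated spherical modules from~\cite[\S\,5]{Kn_MFS}. For each such $(G,V)$ one must (i) enumerate the $G$-orbits in $V$ together with their closure order, and (ii) identify the Pyasetskii-dual orbit of each. Thanks to Lemma~\ref{lemma: same_orbits} and Lemma~\ref{lemma: duality_and_product}, together with Remark~\ref{rm: equivalent_orbits}, it is harmless to replace $G$ by any convenient group with the same image in $\GL(V)$, to work modulo geometric equivalence, and to ignore decomposable cases; so the bookkeeping reduces exactly to the entries of Tables~\ref{table: irreducible_modules} and~\ref{table: reducible_modules}. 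For the Abelian cases the orbit structure, its linear closure order, and the duality $O_i^\vee = Q_{r-i}$ are already given by Theorem~\ref{thm: Pan}, so those rows require only matching the grading data to the representation and reading off $r$.

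For the remaining (non-Abelian) representations I would compute orbits by the standard toolkit: slice representations and Luna stratification, explicit normal forms for vectors (using the classical invariant theory of the groups involved --- determinants, Pfaffians, ranks of matrices, contractions of tensors, etc.), and dimension counts $\dim O = \dim G - \dim G_v$ via stabilizers. The closure order is then obtained from degeneration arguments (limits of one-parameter subgroups) and, where available, from the known structure of the finitely many orbits of the relevant classical or exceptional group actions. For the duality itself the key device is the conormal description: $O^\vee$ is the unique orbit $Q\subset V^*$ with $\overline{N^*O} = \overline{N^*Q}$, equivalently the unique orbit meeting $(\g v)^\perp$ in a dense subset for generic $v\in O$. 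Concretely, for $v\in O$ one computes the fiber $N^*_v O = (\g v)^\perp\subset V^*$, finds a generic element $v^*$ of it, and identifies the orbit $G v^*$; a dimension check $\dim O + \dim O^\vee = \dim V$ (forced by part~1 of Pyasetskii's theorem) serves as a consistency test at every step. When $G$ is reductive one may further use $V\cong V^*$ to phrase everything inside $V$, as in the remark.

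A few structural shortcuts should cut the workload. Products of two pieces are handled by Lemma~\ref{lemma: duality_and_product}, so only ``genuinely indecomposable'' modules need hands-on analysis. The zero orbit is always dual to the open orbit, pinning down two rows for free. Whenever a representation is visibly of the graded (Abelian) type --- e.g.\ the $\GL$-actions on spaces of matrices, symmetric or skew forms coming from a short grading --- Theorem~\ref{thm: Pan} applies verbatim and reverses the (totally ordered) orbit chain. For self-dual $V$ one expects, and should check, that $\vee$ is an order-reversing involution on the orbit poset, which both constrains and confirms the table. The handful of exceptional-group representations (spin modules, $\operatorname{G}_2$ and $\operatorname{E}_6$, $\operatorname{E}_7$ cases) must be done individually using their known orbit lists.

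The main obstacle will be the non-Abelian, non-self-dual reducible representations --- sums like $V_1\oplus V_2$ that do not split as an external product of group actions --- where the orbit poset is not a chain, degenerations between orbits are subtle, and computing $(\g v)^\perp$ for a generic $v$ in each orbit and then recognizing the resulting $G$-orbit in $V^*$ requires genuinely new normal-form computations rather than citation. Controlling which component of $\CCC$ a given $\overline{N^*O}$ equals in these cases, and verifying that the claimed dual has complementary dimension and lies where asserted, is the delicate part; everywhere else the dimension identity $\dim O + \dim O^\vee = \dim V$ and the order-reversing property provide enough rigidity to make the remaining verifications essentially mechanical.
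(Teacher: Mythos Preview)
Your overall plan --- case-by-case enumeration of orbits plus conormal-fiber computations to identify duals --- is the same skeleton the paper uses, and for the Abelian rows and the explicit $(\g v)^\perp$ calculations in the reducible cases (B1)--(B4) you would end up doing essentially what \S\,\ref{s: proof_for_irreducible} and \S\,\ref{s: proof_for_reducible} do. But two of your guiding heuristics are wrong and would mislead you, and you are missing the structural shortcut that does most of the work in the reducible table.

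First, the ``consistency check'' $\dim O + \dim O^\vee = \dim V$ is \emph{not} forced by Pyasetskii's theorem and is false in general. Part~1 says each irreducible component $\CCC_i$ has dimension $\dim V$, but $\overline{N^*O}$ always has dimension $\dim V$ regardless of $O$, so this imposes no relation between $\dim O$ and $\dim O^\vee$. Already in row~\ref{tablecase: irr_spin9} of Table~\ref{table: irreducible_modules} one has $O_1^\vee=O_1$ with $\dim O_1=11$ while $\dim V=16$; the $\Sp_{2n}\times\GL_m$ rows contain several self-dual orbits that are neither zero nor open. Relatedly, your expectation that $\vee$ is order-reversing on the orbit poset is exactly what the paper warns against just after the statement of Theorem~\ref{thm: main_theorem}: outside the Abelian case it generally fails (e.g.\ $O_{10}<O_{20}$ with $O_{10}^\vee=O_{10}$, $O_{20}^\vee=O_{20}$ in row~\ref{tablecase: irr_Sp_GL2}). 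So neither of these can be used to constrain or confirm the tables.

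Second, for the reducible modules you are missing the key reduction: Lemma~\ref{lemma: duality_for_reducible} (via the diagonal $G\hookrightarrow G\times G$ and Lemma~\ref{lemma: duality_for_homomorphism}) shows that the open $G$-orbit in $O\times O'$ is dual to the open $G$-orbit in $O^\vee\times(O')^\vee$. This immediately pairs all ``apparent'' orbits using only the irreducible duality already known, so in cases (B6)--(B10) there is a single leftover orbit, necessarily self-dual, and in case (B5) an application of Lemma~\ref{lemma: duality_for_homomorphism} to $\Sp_{2n}\hookrightarrow\GL_{2n}$ finishes it. Only (B1)--(B4) then require the genuine conormal computation you sketch. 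Without this lemma you would be facing a much larger and partly ill-guided computation; with it (and with the false heuristics discarded), your plan coincides with the paper's.
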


We notice that the order reversing property of Pyasetskii duality, which takes place in the case of abelian actions, does not hold in general.

\bigbreak
\noindent
\label{expl: tables}
\textit{Explanation for the tables:}
Each group $\GL_n$, $\SO_n$, $\Sp_n$ acts on $\CC^n$ naturally; $\CC^\times$ acts on $V$ by scaling.

In each case of Table~\ref{table: irreducible_modules} we assume that every non-abelian factor of $G$ acts on the corresponding tensor factor of $V$. In rows~\ref{tablecase: irr_E6} and~\ref{tablecase: irr_G2} the first factor acts on $V$ via faithful representation of minimal dimension. In rows~\ref{tablecase: irr_spin7} and~\ref{tablecase: irr_spin9} the first factor acts on $V$ via the spin representation, whereas in row~\ref{tablecase: irr_spin10} it acts via a half-spin representation.

In Table~\ref{table: reducible_modules}, the pair $(G,V)$ is arranged in two levels, with $G$ on the upper level and $V$ on the lower one.  Each factor of $G$ acts diagonally on all components of V with which it is connected by an
edge. In the last row the symbol $\CC_\pm^8$ stands for the spaces of the two half-spin representations
of $\Spin_8$.

The last column of both tables contains the orbit diagrams along with the indication of dual orbits (by definition, $O\leq O'$ if $O\subset \overline{O'}$). We identify orbits in $V$ and in $V^*$ according to Remark~\ref{rm: equivalent_orbits}. Specifically, by replacing $G$ with its finite cover one may assume that, with $B\subset G$ and $T\subset B$ denoting a fixed Borel subgroup and a maximal torus, there is an involution $\theta$ on $G$ (\textit{Weyl involution}) such that $\theta(B)\cap B=T$ and $\theta(t)=t^{-1}$ for any $t\in T$. One can show that $V$ equipped with a twisted $G$-action $g\cdot v=\theta(g)v$ is isomorphic to the dual $G$-module $V^*$. The identification of orbits in $V$ and $V^*$ by this isomorphism is independent of the choice of $B$ and $T$.

In Table~\ref{table: reducible_modules}, $V$ is decomposed into a direct sum of two simple submodules $V_1$ and $V_2$, and the letters $O$ and $O'$ (with subscripts) denote orbits in $V_1$ and $V_2$, respectively. The notation for the remaining orbits is as follows: given $O\subset V_1$ and $O'\subset V_2$, the product $O\times O'$ either is a single $G$-orbit $Y$ or decomposes into a union $Y\cup Z$ of two disjoint $G$-invariant subvarieties, with $Y$ being an open orbit in $O\times O'$. Except for the case~\ref{tablecase: red_SpxCxC}, $Z$ is also a single orbit. In the case~\ref{tablecase: red_SpxCxC}, the set $Z$ splits into two orbits: an open orbit $Z^\circ$ and an orbit $Z_\sim$. The signs of orbits are equipped with subscripts. The subscripts at the signs $Y$ and $Z$ are coherent with the subscripts at the corresponding $O$ and $O'$.

\begin{remark}
\label{rm: orbit_graph}
In the first and second row of Table~\ref{table: reducible_modules}, the orbit diagram depends on the parity of $n$. Namely, the expression in parenthesis should be taken into account if $n$ is odd. Otherwise, $Z_k$ is defined to be empty, and the correct inclusions are: in the first row $O_k,Y_{k-1}\subset\overline{Y_k}$; in the second row $O_k\subset\overline{Y_k}$.
\end{remark}

The paper is organized as follows.
In \S\,\ref{s: properties_of_dual_orbits} we provide simple properties of dual orbits and prove Lemmas~\ref{lemma: same_orbits} and~\ref{lemma: duality_and_product}.
In \S\,\ref{s: orbits} we recall some notions of spherical theory and then obtain explicit orbit decompositions for representations which are geometrically equivalent to those in the tables.
In \S\,\ref{s: proof_for_irreducible} and \S\,\ref{s: proof_for_reducible} parts \textup{(a)} and \textup{(b)} of Theorem~\ref{thm: main_theorem} are proved.

The author is grateful to his scientific advisor D.\,A.~Timashev for the formulation of the problem, his constant attention, and numerous valuable comments and suggestions.

\subsection*{Notation and conventions}

The base field is that of complex numbers $\CC$. All varieties, groups, and subgroups are assumed to be algebraic; the corresponding Lie algebras are denoted by Gothic letters. All topological terms refer to the Zariski topology. All linear representations are assumed to be rational and finite-dimensional.

Below $X$ denotes a smooth subvariety of a vector space $V$.


$\Sym^2 V$ is the symmetric square of a vector space $V$;

$\wedge^2 V$ is the exterior square of a vector space $V$;

$U^\perp$ is the annihilator of $U$ in $V^*$, or, if $V$ is equipped with a (skew-)symmetric bilinear form, the space of the vectors orthogonal to $U$;

$x\perp y$ means that $x\in\{y\}^\perp$ in the preceding notation;

$\overline{X}$ is the closure of $X$ in $V$;

$T_x X\subset V$ is the tangent space of $X$ at a point $x$;

$N^*_x X = (T_x X)^\perp\subset V^*$ is the conormal space of $X$ at a point $x$;

$N^* X = \{(x,y)\mid x\in X, y\in N^*_x X\}\subset V\oplus V^*$ is the conormal bundle of $X$;

$V/G$ is the set of orbits in the space $V$ acted on by a group $G$;

$\cone(u_1,\dots,u_k)$ is the polyhedral cone in a real vector space generated by vectors $u_i$;

$\transpose{x}$ is the transpose matrix, or the adjoint operator $W^*\to U^*$ for $x: U\to W$;

{
\footnotesize

\begin{table}[H]
\caption{Irreducible representations}
\label{table: irreducible_modules}

\begin{tabular}{|c|c|c|c|c|}

\hline

No. & $G$ & $V$ & Note & Orbits and duality \\

\hline

\newcase\label{tablecase: irr_GL_GL}
&
$\GL_q \times \GL_p$
&
$\CC^q \otimes \CC^p$
& 
\renewcommand{\tabcolsep}{0pt}%
\begin{tabular}{c}
     $q\geq p\geq 1,$ \\
     $r = p$
\end{tabular}
\renewcommand{\tabcolsep}{3pt}%
&
\multirow{5}{*}{
\begin{tabular}{c}
$O_i^\vee = O_{r-i}$ \\
\unitlength 0.70ex
\linethickness{0.4pt}
\begin{picture}(26.00,4.00)
\put(2.00,1.5){\makebox(0,0)[cc]{$O_0$}}
\put(13.00,1.5){\makebox(0,0)[cc]{${\cdots}$}}
\put(24.00,1.5){\makebox(0,0)[cc]{$O_r$}}
\put(5.00,1.5){\vector(1,0){5.00}}
\put(16.00,1.5){\vector(1,0){5.00}}
\end{picture} 
\end{tabular}
}
\\

\cline{1-4}

\newcase\label{tablecase: irr_GL_Sym}
& $\GL_n$ & $\Sym^2\CC^n$ & $n\geq 1,r=n$ &
\\

\cline{1-4}

\newcase\label{tablecase: irr_GL_Ext}
& $\GL_n$ & $\mathrm{\wedge}^2\CC^n$ & 
\renewcommand{\tabcolsep}{0pt}%
\begin{tabular}{c}
     $n\geq 2,$ \\
     $r = \lfloor n/2\rfloor$
\end{tabular}
\renewcommand{\tabcolsep}{3pt}%
&
\\

\cline{1-4}

\newcase\label{tablecase: irr_E6}
& $\E_6\times\CC^\times$ & $\CC^{27}$ & $r=3$ &
\\

\hline

\newcase\label{tablecase: irr_SO}
& $\SO_n\times\CC^\times$ & $\CC^n$ & $n\geq 2$ &
\multirow{4}{*}{
\begin{tabular}{c}
$O_0^\vee=O_2, O_1^\vee = O_1$ \\
\unitlength 0.70ex
\linethickness{0.4pt}
\begin{picture}(26.00,4.00)
\put(2.00,1.5){\makebox(0,0)[cc]{$O_0$}}
\put(13.00,1.5){\makebox(0,0)[cc]{$O_1$}}
\put(24.00,1.5){\makebox(0,0)[cc]{$O_2$}}
\put(5.00,1.5){\vector(1,0){5.00}}
\put(16.00,1.5){\vector(1,0){5.00}}
\end{picture}
\end{tabular}
}
\\

\cline{1-4}

\newcase\label{tablecase: irr_G2}
& $\Gtwo\times\CC^\times$ & $\CC^7$ & $-$ &
\\

\cline{1-4}

\newcase\label{tablecase: irr_spin7}
& $\Spin_7\times\CC^\times$ & $\CC^{8}$ & $-$ &
\\

\cline{1-4}

\newcase\label{tablecase: irr_spin10}
& $\Spin_{10}\times\CC^\times$ & $\CC^{16}$ & $-$ &
\\

\hline

\newcase\label{tablecase: irr_spin9}
& $\Spin_9\times\CC^\times$ & $\CC^{16}$ & $-$ &
\begin{tabular}{c}
$O_0^\vee=O_3, O_{1}^\vee=O_{1}, O_{2}^\vee=O_{2}$ \\
\unitlength 0.70ex
\linethickness{0.4pt}
\unitlength 0.70ex
\linethickness{0.4pt}
\begin{picture}(37.00,4.00)(-18.50,-1.00)
\put(-16.50,0.00){\makebox(0,0)[cc]{$O_{0}$}}
\put(-5.50,0.00){\makebox(0,0)[cc]{$O_{1}$}}
\put(5.50,0.00){\makebox(0,0)[cc]{$O_{2}$}}
\put(16.50,0.00){\makebox(0,0)[cc]{$O_{3}$}}
\put(-13.50,0.00){\vector(1,0){5.00}}
\put(-2.50,0.00){\vector(1,0){5.00}}
\put(8.50,0.00){\vector(1,0){5.00}}
\end{picture}
\end{tabular}
\\

\hline

\newcase\label{tablecase: irr_Sp_first}
& $\Sp_{2n} \times\CC^\times$ & $\CC^{2n}$ & $n\geq 1$ &
\unitlength 0.70ex
\linethickness{0.4pt}
\begin{picture}(15.00,3.00)(-7.00,-1.00)
\put(-5.50,0.00){\makebox(0,0)[cc]{$O_0$}}
\put(5.50,0.00){\makebox(0,0)[cc]{$O_1$}}
\put(-2.50,0.00){\vector(1,0){5.00}}
\end{picture}
\\

\hline

\newcase\label{tablecase: irr_Sp_GL2}
& $\Sp_{2n}\times\GL_2$ & $\CC^{2n}\otimes\CC^2$ & $n\geq 2$ &
\begin{tabular}{c}
$O_{00}^\vee=O_{22}, O_{10}^\vee=O_{10}, O_{20}^\vee=O_{20}$ \\
\unitlength 0.70ex
\linethickness{0.4pt}
\begin{picture}(37.00,4.00)(-18.50,-1.00)
\put(-16.50,0.00){\makebox(0,0)[cc]{$O_{00}$}}
\put(-5.50,0.00){\makebox(0,0)[cc]{$O_{10}$}}
\put(5.50,0.00){\makebox(0,0)[cc]{$O_{20}$}}
\put(16.50,0.00){\makebox(0,0)[cc]{$O_{22}$}}
\put(-13.50,0.00){\vector(1,0){5.00}}
\put(-2.50,0.00){\vector(1,0){5.00}}
\put(8.50,0.00){\vector(1,0){5.00}}
\end{picture}
\end{tabular}
\\

\hline

\newcase\label{tablecase: irr_Sp_GL3}
& $\Sp_{2n}\times\GL_3$ & $\CC^{2n}\otimes\CC^3$ & $n\geq 3$ &
\begin{tabular}{c}
$O_{00}^\vee=O_{32}, O_{10}^\vee=O_{22}, O_{20}^\vee=O_{30}$ \\
\unitlength 0.70ex
\linethickness{0.4pt}
\begin{picture}(48.00,13.00)(-24.00,-6.00)
\put(-22.00,0.00){\makebox(0,0)[cc]{$O_{00}$}}
\put(-11.00,0.00){\makebox(0,0)[cc]{$O_{10}$}}
\put(0.00,0.00){\makebox(0,0)[cc]{$O_{20}$}}
\put(11.00,5.00){\makebox(0,0)[cc]{$O_{22}$}}
\put(11.00,-5.00){\makebox(0,0)[cc]{$O_{30}$}}
\put(22.00,0.00){\makebox(0,0)[cc]{$O_{32}$}}
\put(-19.00,0.00){\vector(1,0){5.00}}
\put(-8.00,0.00){\vector(1,0){5.00}}
\put(3.00,1.00){\vector(3,2){5.00}}
\put(3.00,-1.00){\vector(3,-2){5.00}}
\put(14.00,4.00){\vector(3,-2){5.00}}
\put(14.00,-4.00){\vector(3,2){5.00}}
\end{picture}   
\end{tabular}
\\

\hline

\newcase & $\Sp_4\times\GL_3$ & $\CC^4\otimes\CC^3$ & $-$ & 
\begin{tabular}{c}
$O_{00}^\vee=O_{32}, O_{10}^\vee=O_{22}, O_{20}^\vee=O_{20}$ \\
\unitlength 0.70ex
\linethickness{0.4pt}
\begin{picture}(48.00,4.00)(-24.00,-1.00)
\put(-22.00,0.00){\makebox(0,0)[cc]{$O_{00}$}}
\put(-11.00,0.00){\makebox(0,0)[cc]{$O_{10}$}}
\put(0.00,0.00){\makebox(0,0)[cc]{$O_{20}$}}
\put(11.00,0.00){\makebox(0,0)[cc]{$O_{22}$}}
\put(22.00,0.00){\makebox(0,0)[cc]{$O_{32}$}}
\put(-19.00,0.00){\vector(1,0){5.00}}
\put(-8.00,0.00){\vector(1,0){5.00}}
\put(3.00,0.00){\vector(1,0){5.00}}
\put(14.00,0.00){\vector(1,0){5.00}}

\end{picture}   
\end{tabular}
\\

\hline

\newcase\label{tablecase: irr_Sp_last}
& $\Sp_4\times\GL_n$ & $\CC^4\otimes\CC^n$ & $n\geq 4$ & 
\begin{tabular}{c}
$O_{00}^\vee=O_{44}$,
$O_{10}^\vee = O_{32},
O_{20}^\vee = O_{20},
O_{22}^\vee = O_{22}$ \\
\unitlength 0.70ex
\linethickness{0.4pt}
\begin{picture}(60.00,4.00)(-29.50,-1.00)
\put(-27.50,0.00){\makebox(0,0)[cc]{$O_{00}$}}
\put(-16.50,0.00){\makebox(0,0)[cc]{$O_{10}$}}
\put(-5.50,0.00){\makebox(0,0)[cc]{$O_{20}$}}
\put(5.50,0.00){\makebox(0,0)[cc]{$O_{22}$}}
\put(16.50,0.00){\makebox(0,0)[cc]{$O_{32}$}}
\put(27.50,0.00){\makebox(0,0)[cc]{$O_{44}$}}
\put(-24.50,0.00){\vector(1,0){5.00}}
\put(-13.50,0.00){\vector(1,0){5.00}}
\put(-2.50,0.00){\vector(1,0){5.00}}
\put(8.50,0.00){\vector(1,0){5.00}}
\put(19.50,0.00){\vector(1,0){5.00}}
\end{picture}
\end{tabular}
\\
\hline

\end{tabular}
\end{table}
}


{
\footnotesize

\begin{longtable}{|c|c|c|c|}
\caption{Reducible representations}
\label{table: reducible_modules}
\endfirsthead
\caption{Reducible representations (continued)}
\endhead

\hline

No. & $(G,V)$ & Note & Orbits and duality \\

\hline

\newcase\label{tablecase: red_GLxC} &
\begin{tabular}{c}
\unitlength 0.70ex
\linethickness{0.4pt}
\begin{picture}(16.00,12.00)(-9.00,-9.00)
\put(0.00,0.00){\makebox(0,0)[cc]{$\GL_n\times\CC^\times$}}
\put(-1.00,-8.00){\makebox(0,0)[cc]{$\wedge^2 \CC^n\oplus \CC^n$}}
\put(-5.00,-2.00){\line(0,-1){4.00}}
\put(-3.00,-2.00){\line(3,-2){6.00}}
\put(4.00,-2.00){\line(0,-1){4.00}}
\end{picture}
\end{tabular}
&
\renewcommand{\tabcolsep}{0pt}
\begin{tabular}{c}
$n\geq 4$,  \\
cf. \\
rem.~\ref{rm: orbit_graph} 
\end{tabular}
\renewcommand{\tabcolsep}{3pt}
& 
\begin{tabular}{c}
$\{0\}^\vee=Y_k, (O'_1)^\vee=O_{k}, O_i^\vee=Y_{k-i}$, \\
$Z_i^\vee=
\begin{cases}
    Z_{k-i}, & \text{if } n=2k,\\
    Z_{k-i+1}, & \text{if } n=2k+1.
\end{cases}
$\\
\unitlength 0.75ex
\linethickness{0.4pt}
\begin{picture}(75.00,16.00)(-38.00,-6.00)
\put(-35.50,0.00){\makebox(0,0)[cc]{$O'_1$}}
\put(-26.50,0.00){\makebox(0,0)[cc]{$Z_1$}}
\put(-17.50,-5.00){\makebox(0,0)[cc]{$Y_1$}}
\put(-9.00,0.00){\makebox(0,0)[cc]{$Z_2$}}
\put(0.00,-4.00){\makebox(0,0)[cc]{$\cdots$}}
\put(8.50,0.00){\makebox(0,0)[cc]{$Z_{k-1}$}}
\put(17.50,-5.00){\makebox(0,0)[cc]{$Y_{k-1}$}}
\put(26.50,0.00){\makebox(0,0)[cc]{$(Z_k$}}
\put(34.50,0.00){\makebox(0,0)[cc]{$)Y_k$}}

\put(-35.50,8.00){\makebox(0,0)[cc]{$\{0\}\;$}}
\put(-26.50,8.00){\makebox(0,0)[cc]{$O_1$}}
\put(-9.00,8.00){\makebox(0,0)[cc]{$O_2$}}
\put(0.00,8.00){\makebox(0,0)[cc]{$\cdots$}}
\put(8.50,8.00){\makebox(0,0)[cc]{$O_{k-1}$}}
\put(26.50,8.00){\makebox(0,0)[cc]{$O_{k}$}}

\put(-33.50,0.00){\vector(1,0){5.00}}
\put(-24.50,-1.00){\vector(3,-2){4.50}}
\put(-15.50,-4.00){\vector(3,2){4.50}}
\put(-7.00,-1.00){\vector(3,-2){3.50}}
\put(2.00,-3.50){\vector(3,2){3.50}}
\put(10.50,-2.00){\vector(3,-2){3.50}}
\put(19.50,-4.00){\vector(3,2){4.00}}
\put(29.50,0.00){\vector(1,0){3.00}}

\put(-33.50,8.00){\vector(1,0){5.00}}
\put(-24.50,8.00){\vector(1,0){13.00}}
\put(-6.00,8.00){\vector(1,0){2.50}}
\put(2.00,8.00){\vector(1,0){2.50}}
\put(12.00,8.00){\vector(1,0){11.50}}

\put(-36.00,6.00){\vector(0,-1){4.00}}
\put(-27.00,6.00){\vector(0,-1){4.00}}
\put(-9.5,6.00){\vector(0,-1){4.00}}
\put(7.00,6.00){\vector(0,-1){4.00}}
\put(26.00,6.00){\vector(0,-1){4.00}}

\end{picture}   
\end{tabular}
\\

\hline

\newcase\label{tablecase: red_GLxC*} &
\begin{tabular}{c}
\unitlength 0.70ex
\linethickness{0.4pt}
\begin{picture}(16.00,12.00)(-9.00,-9.00)
\put(0.00,0.00){\makebox(0,0)[cc]{
$\GL_n\times\CC^\times$}
}
\put(-1.00,-8.00){\makebox(0,0)[cc]{
$\wedge^2 \CC^{n}\oplus (\CC^{n})^*$
}}
\put(-5.00,-2.00){\line(0,-1){4.00}}
\put(-3.00,-2.00){\line(3,-2){6.00}}
\put(4.00,-2.00){\line(0,-1){4.00}}
\end{picture}
\end{tabular}
&
\renewcommand{\tabcolsep}{0pt}
\begin{tabular}{c}
$n\geq 4$,  \\
cf. \\
rem.~\ref{rm: orbit_graph} 
\end{tabular}
\renewcommand{\tabcolsep}{3pt}
& 
\begin{tabular}{c}
$\{0\}^\vee=Y_k, (O'_1)^\vee=O_{k}, O_i^\vee=Y_{k-i}$, \\
$Z_i^\vee=
\begin{cases}
    Z_{k-i}, & \text{if } n=2k,\\
    Z_{k-i+1}, & \text{if } n=2k+1.
\end{cases}
$\\
\unitlength 0.70ex
\linethickness{0.4pt}
\begin{picture}(54.00,22.00)(-27.00,-19.00)
\put(-25.00,0.00){\makebox(0,0)[cc]{$\{0\}$}}
\put(-12.50,0.00){\makebox(0,0)[cc]{$O_{1}$}}
\put(0.00,0.00){\makebox(0,0)[cc]{$\cdots$}}
\put(12.50,0.00){\makebox(0,0)[cc]{$O_{k-1}$}}
\put(25.00,0.00){\makebox(0,0)[cc]{$O_{k}$}}

\put(-25.00,-9.00){\makebox(0,0)[cc]{$O'_1$}}
\put(-12.50,-9.00){\makebox(0,0)[cc]{$Z_{1}$}}
\put(0.00,-9.00){\makebox(0,0)[cc]{$\cdots$}}
\put(12.50,-9.00){\makebox(0,0)[cc]{$Z_{k-1}($}}
\put(25.00,-9.00){\makebox(0,0)[cc]{$\,Z_{k})$}}

\put(-12.50,-18.00){\makebox(0,0)[cc]{$Y_{1}$}}
\put(0.00,-18.00){\makebox(0,0)[cc]{$\cdots$}}
\put(12.50,-18.00){\makebox(0,0)[cc]{$Y_{k-1}$}}
\put(25.00,-18.00){\makebox(0,0)[cc]{$Y_{k}$}}

\put(-20.50,0.00){\vector(1,0){4.00}}
\put(-8.00,0.00){\vector(1,0){4.00}}
\put(4.50,0.00){\vector(1,0){4.00}}
\put(17.00,0.00){\vector(1,0){4.00}}

\put(-20.50,-9.00){\vector(1,0){4.00}}
\put(-8.00,-9.00){\vector(1,0){4.00}}
\put(4.50,-9.00){\vector(1,0){4.00}}
\put(17.00,-9.00){\vector(1,0){4.00}}

\put(-8.00,-18.00){\vector(1,0){4.00}}
\put(4.50,-18.00){\vector(1,0){4.00}}
\put(17.00,-18.00){\vector(1,0){4.00}}

\put(-25.00,-2.50){\vector(0,-1){4.00}}
\put(-12.50,-2.50){\vector(0,-1){4.00}}
\put(-00.00,-2.50){\vector(0,-1){4.00}}
\put(12.50,-2.50){\vector(0,-1){4.00}}
\put(25.00,-2.50){\vector(0,-1){4.00}}

\put(-12.50,-11.50){\vector(0,-1){4.00}}
\put(-00.00,-11.50){\vector(0,-1){4.00}}
\put(12.50,-11.50){\vector(0,-1){4.00}}
\put(25.00,-11.50){\vector(0,-1){4.00}}

\end{picture}   
\end{tabular}
\\

\hline

\multirow{6}{*}{
\newcase\label{tablecase: red_GLxGL}
}
&
\multirow{6}{*}{

\begin{tabular}{c}
\unitlength 0.70ex
\linethickness{0.4pt}
\begin{picture}(24.00,12.00)(-12.00,-9.00)
\put(0.00,0.00){
\makebox(0,0)[cc]{$\GL_q\times\GL_p$}
}
\put(0.00,-8.00){\makebox(0,0)[cc]{$(\CC^{q}\otimes\CC^p)\oplus\CC^{p}$}}
\put(-5.50,-2.00){\line(-2,-3){2.50}}
\put(3.50,-2.00){\line(-2,-3){2.50}}
\put(5.50,-2.00){\line(2,-3){2.50}}
\end{picture}
\end{tabular}

}
&

$q \geq p \geq 2$

&
\begin{tabular}{c}

$\{0\}^\vee=Y_p, (O'_1)^\vee=O_{p}, O_i^\vee=Y_{p-i}$,
$Z_i^\vee=Z_{p-i}.$
\\
\unitlength 0.75ex
\linethickness{0.4pt}
\begin{picture}(75.00,16.00)(-42.00,-6.00)
\put(-35.50,0.00){\makebox(0,0)[cc]{$O'_1$}}
\put(-26.50,0.00){\makebox(0,0)[cc]{$Z_1$}}
\put(-17.50,-5.00){\makebox(0,0)[cc]{$Y_1$}}
\put(-9.00,0.00){\makebox(0,0)[cc]{$Z_2$}}
\put(0.00,-4.00){\makebox(0,0)[cc]{$\cdots$}}
\put(8.50,0.00){\makebox(0,0)[cc]{$Z_{p-1}$}}
\put(17.50,-5.00){\makebox(0,0)[cc]{$Y_{p-1}$}}
\put(26.50,0.00){\makebox(0,0)[cc]{$Y_p$}}

\put(-35.50,8.00){\makebox(0,0)[cc]{$\{0\}\;$}}
\put(-26.50,8.00){\makebox(0,0)[cc]{$O_1$}}
\put(-9.00,8.00){\makebox(0,0)[cc]{$O_2$}}
\put(0.00,8.00){\makebox(0,0)[cc]{$\cdots$}}
\put(8.50,8.00){\makebox(0,0)[cc]{$O_{p-1}$}}
\put(26.50,8.00){\makebox(0,0)[cc]{$O_{p}$}}

\put(-33.50,0.00){\vector(1,0){5.00}}
\put(-24.50,-1.00){\vector(3,-2){4.50}}
\put(-15.50,-4.00){\vector(3,2){4.50}}
\put(-7.00,-1.00){\vector(3,-2){3.50}}
\put(2.00,-3.50){\vector(3,2){3.50}}
\put(10.50,-2.00){\vector(3,-2){3.50}}
\put(19.50,-4.00){\vector(3,2){4.00}}

\put(-33.50,8.00){\vector(1,0){5.00}}
\put(-24.50,8.00){\vector(1,0){13.00}}
\put(-6.00,8.00){\vector(1,0){2.50}}
\put(2.00,8.00){\vector(1,0){2.50}}
\put(12.00,8.00){\vector(1,0){11.50}}

\put(-36.00,6.00){\vector(0,-1){4.00}}
\put(-27.00,6.00){\vector(0,-1){4.00}}
\put(-9.5,6.00){\vector(0,-1){4.00}}
\put(7.00,6.00){\vector(0,-1){4.00}}
\put(26.00,6.00){\vector(0,-1){4.00}}

\end{picture}   

\end{tabular}
\\

\cline{3-4}

&

&

$p > q \geq 2$

&

\begin{tabular}{c}
$\{0\}^\vee=Y_q, (O'_1)^\vee=O_{q}, O_i^\vee=Y_{q-i}$,
$Z_i^\vee=Z_{q-i+1}$.
\\
\unitlength 0.75ex
\linethickness{0.4pt}
\begin{picture}(75.00,16.00)(-38.00,-6.00)
\put(-35.50,0.00){\makebox(0,0)[cc]{$O'_1$}}
\put(-26.50,0.00){\makebox(0,0)[cc]{$Z_1$}}
\put(-17.50,-5.00){\makebox(0,0)[cc]{$Y_1$}}
\put(-9.00,0.00){\makebox(0,0)[cc]{$Z_2$}}
\put(0.00,-4.00){\makebox(0,0)[cc]{$\cdots$}}
\put(8.50,0.00){\makebox(0,0)[cc]{$Z_{q-1}$}}
\put(17.50,-5.00){\makebox(0,0)[cc]{$Y_{q-1}$}}
\put(26.50,0.00){\makebox(0,0)[cc]{$Z_q$}}
\put(34.50,0.00){\makebox(0,0)[cc]{$Y_q$}}

\put(-35.50,8.00){\makebox(0,0)[cc]{$\{0\}\;$}}
\put(-26.50,8.00){\makebox(0,0)[cc]{$O_1$}}
\put(-9.00,8.00){\makebox(0,0)[cc]{$O_2$}}
\put(0.00,8.00){\makebox(0,0)[cc]{$\cdots$}}
\put(8.50,8.00){\makebox(0,0)[cc]{$O_{q-1}$}}
\put(26.50,8.00){\makebox(0,0)[cc]{$O_{q}$}}

\put(-33.50,0.00){\vector(1,0){5.00}}
\put(-24.50,-1.00){\vector(3,-2){4.50}}
\put(-15.50,-4.00){\vector(3,2){4.50}}
\put(-7.00,-1.00){\vector(3,-2){3.50}}
\put(2.00,-3.50){\vector(3,2){3.50}}
\put(10.50,-2.00){\vector(3,-2){3.50}}
\put(19.50,-4.00){\vector(3,2){4.00}}
\put(29.50,0.00){\vector(1,0){3.00}}

\put(-33.50,8.00){\vector(1,0){5.00}}
\put(-24.50,8.00){\vector(1,0){13.00}}
\put(-6.00,8.00){\vector(1,0){2.50}}
\put(2.00,8.00){\vector(1,0){2.50}}
\put(12.00,8.00){\vector(1,0){11.50}}

\put(-36.00,6.00){\vector(0,-1){4.00}}
\put(-27.00,6.00){\vector(0,-1){4.00}}
\put(-9.5,6.00){\vector(0,-1){4.00}}
\put(7.00,6.00){\vector(0,-1){4.00}}
\put(26.00,6.00){\vector(0,-1){4.00}}

\end{picture}

\end{tabular}
\\

\hline

\multirow{7}{*}{
\newcase\label{tablecase: red_GLxGL*}
}
&
\multirow{7}{*}{
\begin{tabular}{c}
\unitlength 0.70ex
\linethickness{0.4pt}
\begin{picture}(24.00,12.00)(-12.00,-9.00)
\put(0.00,0.00){\makebox(0,0)[cc]{
$\GL_q\times\GL_p$}
}
\put(0.00,-8.00){\makebox(0,0)[cc]
{$(\CC^{q}\otimes\CC^p)\oplus(\CC^{p})^*$}
}
\put(-5.50,-2.00){\line(-2,-3){2.50}}
\put(3.50,-2.00){\line(-2,-3){2.50}}
\put(5.50,-2.00){\line(2,-3){2.50}}
\end{picture}
\end{tabular}
}
&

$q \geq p \geq 2$

& 
\begin{tabular}{c}
$\{0\}^\vee=Y_p, (O'_1)^\vee=O_{p}, O_i^\vee=Y_{p-i}$,
$Z_i^\vee=Z_{p-i}$.
\\
\unitlength 0.70ex
\linethickness{0.4pt}
\begin{picture}(54.00,22.00)(-27.00,-19.00)
\put(-25.00,0.00){\makebox(0,0)[cc]{$\{0\}$}}
\put(-12.50,0.00){\makebox(0,0)[cc]{$O_{1}$}}
\put(0.00,0.00){\makebox(0,0)[cc]{$\cdots$}}
\put(12.50,0.00){\makebox(0,0)[cc]{$O_{p-1}$}}
\put(25.00,0.00){\makebox(0,0)[cc]{$O_{p}$}}

\put(-25.00,-9.00){\makebox(0,0)[cc]{$O'_1$}}
\put(-12.50,-9.00){\makebox(0,0)[cc]{$Z_{1}$}}
\put(0.00,-9.00){\makebox(0,0)[cc]{$\cdots$}}
\put(12.50,-9.00){\makebox(0,0)[cc]{$Z_{p-1}$}}

\put(-12.50,-18.00){\makebox(0,0)[cc]{$Y_{1}$}}
\put(0.00,-18.00){\makebox(0,0)[cc]{$\cdots$}}
\put(12.50,-18.00){\makebox(0,0)[cc]{$Y_{p-1}$}}
\put(25.00,-18.00){\makebox(0,0)[cc]{$Y_{p}$}}

\put(-20.50,0.00){\vector(1,0){4.00}}
\put(-8.00,0.00){\vector(1,0){4.00}}
\put(4.50,0.00){\vector(1,0){4.00}}
\put(17.00,0.00){\vector(1,0){4.00}}

\put(-20.50,-9.00){\vector(1,0){4.00}}
\put(-8.00,-9.00){\vector(1,0){4.00}}
\put(4.50,-9.00){\vector(1,0){4.00}}

\put(-8.00,-18.00){\vector(1,0){4.00}}
\put(4.50,-18.00){\vector(1,0){4.00}}
\put(17.00,-18.00){\vector(1,0){4.00}}

\put(-25.00,-2.50){\vector(0,-1){4.00}}
\put(-12.50,-2.50){\vector(0,-1){4.00}}
\put(-00.00,-2.50){\vector(0,-1){4.00}}
\put(12.50,-2.50){\vector(0,-1){4.00}}
\put(25.00,-2.50){\vector(0,-1){13.00}}

\put(-12.50,-11.50){\vector(0,-1){4.00}}
\put(-00.00,-11.50){\vector(0,-1){4.00}}
\put(12.50,-11.50){\vector(0,-1){4.00}}

\end{picture}  

\end{tabular}
\\

\cline{3-4}

& &

$p > q \geq 2$

&

\begin{tabular}{c}
$\{0\}^\vee=Y_q, (O'_1)^\vee=O_{q}, O_i^\vee=Y_{q-i}$,
$Z_i^\vee=Z_{q-i+1}$.
\\
\unitlength 0.70ex
\linethickness{0.4pt}
\begin{picture}(54.00,22.00)(-27.00,-19.00)
\put(-25.00,0.00){\makebox(0,0)[cc]{$\{0\}$}}
\put(-12.50,0.00){\makebox(0,0)[cc]{$O_{1}$}}
\put(0.00,0.00){\makebox(0,0)[cc]{$\cdots$}}
\put(12.50,0.00){\makebox(0,0)[cc]{$O_{q-1}$}}
\put(25.00,0.00){\makebox(0,0)[cc]{$O_{q}$}}

\put(-25.00,-9.00){\makebox(0,0)[cc]{$O'_1$}}
\put(-12.50,-9.00){\makebox(0,0)[cc]{$Z_{1}$}}
\put(0.00,-9.00){\makebox(0,0)[cc]{$\cdots$}}
\put(12.50,-9.00){\makebox(0,0)[cc]{$Z_{q-1}$}}
\put(25.00,-9.00){\makebox(0,0)[cc]{$\,Z_{q}$}}

\put(-12.50,-18.00){\makebox(0,0)[cc]{$Y_{1}$}}
\put(0.00,-18.00){\makebox(0,0)[cc]{$\cdots$}}
\put(12.50,-18.00){\makebox(0,0)[cc]{$Y_{q-1}$}}
\put(25.00,-18.00){\makebox(0,0)[cc]{$Y_{q}$}}

\put(-20.50,0.00){\vector(1,0){4.00}}
\put(-8.00,0.00){\vector(1,0){4.00}}
\put(4.50,0.00){\vector(1,0){4.00}}
\put(17.00,0.00){\vector(1,0){4.00}}

\put(-20.50,-9.00){\vector(1,0){4.00}}
\put(-8.00,-9.00){\vector(1,0){4.00}}
\put(4.50,-9.00){\vector(1,0){4.00}}
\put(17.00,-9.00){\vector(1,0){4.00}}

\put(-8.00,-18.00){\vector(1,0){4.00}}
\put(4.50,-18.00){\vector(1,0){4.00}}
\put(17.00,-18.00){\vector(1,0){4.00}}

\put(-25.00,-2.50){\vector(0,-1){4.00}}
\put(-12.50,-2.50){\vector(0,-1){4.00}}
\put(-00.00,-2.50){\vector(0,-1){4.00}}
\put(12.50,-2.50){\vector(0,-1){4.00}}
\put(25.00,-2.50){\vector(0,-1){4.00}}

\put(-12.50,-11.50){\vector(0,-1){4.00}}
\put(-00.00,-11.50){\vector(0,-1){4.00}}
\put(12.50,-11.50){\vector(0,-1){4.00}}
\put(25.00,-11.50){\vector(0,-1){4.00}}

\end{picture}   
\end{tabular}
\\

\hline

\newcase\label{tablecase: red_SpxCxC}
&
\begin{tabular}{c}
\unitlength 0.70ex
\linethickness{0.4pt}
\begin{picture}(22.00,12.00)(-11.50,-9.00)
\put(0.00,0.00){\makebox(0,0)[cc]{$\CC^\times\!\times\!\Sp_{2n}\!\times\CC^\times$}}
\put(0.00,-8.00){\makebox(0,0)[cc]{$\CC^{2n}\!\oplus\CC^{2n}$}}
\put(-9.00,-2.00){\line(2,-3){2.50}}
\put(-2.00,-2.00){\line(-2,-3){2.50}}
\put(0.00,-2.00){\line(2,-3){2.50}}
\put(7.00,-2.00){\line(-2,-3){2.50}}
\end{picture}
\end{tabular}
& $n\geq 2$ & 
\begin{tabular}{c}
$\{0\}^\vee = Y, O_1^\vee=O'_1, Z_\sim^\vee=Z_\sim,(Z^\circ)^\vee=Z^\circ$.\\
\unitlength 0.70ex
\linethickness{0.4pt}
\begin{picture}(48.00,13.00)(-24.00,-6.00)

\put(-22.00,0.00){\makebox(0,0)[cc]{$\{0\}$}}
\put(-11.00,5.00){\makebox(0,0)[cc]{$O_1$}}
\put(-11.00,-5.00){\makebox(0,0)[cc]{$O'_1$}}
\put(0.00,0.00){\makebox(0,0)[cc]{$Z_\sim$}}
\put(11.00,0.00){\makebox(0,0)[cc]{$Z^\circ$}}
\put(22.00,0.00){\makebox(0,0)[cc]{$Y$}}

\put(-19.00,1.00){\vector(3,2){5.00}}
\put(-19.00,-1.00){\vector(3,-2){5.00}}
\put(-8.00,4.00){\vector(3,-2){5.00}}
\put(-8.00,-4.00){\vector(3,2){5.00}}
\put(3.00,0.00){\vector(1,0){5.00}}
\put(14.00,0.00){\vector(1,0){5.00}}

\end{picture}   
\end{tabular}
\\

\hline

\newcase\label{tablecase: red_SpxGL}
&
\begin{tabular}{c}
\unitlength 0.70ex
\linethickness{0.4pt}
\begin{picture}(24.00,12.00)(-12.00,-9.00)
\put(0.00,0.00){\makebox(0,0)[cc]{$(\Sp_{2n}\!\times\CC^\times)\!\times\!\GL_2$}}
\put(0.00,-8.00){\makebox(0,0)[cc]{$(\CC^{2n}\!\otimes\!\CC^2)\,\oplus\,\CC^{2}$}}
\put(-3.50,-2.00){\line(0,-1){4.00}}
\put(9.50,-2.00){\line(0,-1){4.00}}
\end{picture}
\end{tabular}
& $n\geq 2$ & 

\begin{tabular}{c}
$\{0\}^\vee=Y_{22}$, $(O'_1)^\vee=O_{22}, O_{j0}^\vee=Y_{j0},Z_{10}^\vee=Z_{10}$.\\
\unitlength 0.70ex
\linethickness{0.4pt}
\begin{picture}(48.00,17.00)(-24.00,-6.00)

\put(-22.00,0.00){\makebox(0,0)[cc]{$O'_1$}}
\put(-11.00,0.00){\makebox(0,0)[cc]{$Z_{10}$}}
\put(0.00,-5.00){\makebox(0,0)[cc]{$Y_{10}$}}
\put(11.00,0.00){\makebox(0,0)[cc]{$Y_{20}$}}
\put(22.00,0.00){\makebox(0,0)[cc]{$Y_{22}$}}

\put(-22.00,8.00){\makebox(0,0)[cc]{$\{0\}$}}
\put(-11.00,8.00){\makebox(0,0)[cc]{$O_{10}$}}
\put(11.00,8.00){\makebox(0,0)[cc]{$O_{20}$}}
\put(22.00,8.00){\makebox(0,0)[cc]{$O_{22}$}}

\put(-19.00,0.00){\vector(1,0){5.00}}
\put(-8.00,-0.50){\vector(3,-2){5.00}}
\put(3.00,-4.00){\vector(3,2){5.00}}
\put(14.00,0.00){\vector(1,0){5.00}}

\put(-19.00,8.00){\vector(1,0){5.00}}
\put(-8.00,8.00){\vector(1,0){16.00}}
\put(14.00,8.00){\vector(1,0){5.00}}

\put(-22.00,6.00){\vector(0,-1){4.00}}
\put(-11.00,6.00){\vector(0,-1){4.00}}
\put(11.00,6.00){\vector(0,-1){4.00}}
\put(22.00,6.00){\vector(0,-1){4.00}}

\end{picture}   
\end{tabular}
\\

\hline

\newcase\label{tablecase: red_GLxSLxGL} &
\begin{tabular}{c}
\unitlength 0.70ex
\linethickness{0.4pt}
\begin{picture}(41.50,12.00)(-20.00,-9.00)
\put(0.00,0.00){\makebox(0,0)[cc]
{
$\GL_n\!\times\!\SL_2\!\times\!\GL_m$}
}
\put(0.00,-8.00){\makebox(0,0)[cc]
{
$(\CC^{n}\otimes\CC^2)\oplus(\CC^{2}\otimes\CC^{m})$}
}
\put(-10.50,-2.00){\line(-2,-3){2.50}}
\put(-2.00,-2.00){\line(-2,-3){2.50}}
\put(2.00,-2.00){\line(2,-3){2.50}}
\put(10.50,-2.00){\line(2,-3){2.50}}
\end{picture}
\end{tabular}
&
\renewcommand{\tabcolsep}{0pt}%
\begin{tabular}{c}
$m\geq 2$,  \\
$n\geq 2$ 
\end{tabular}
\renewcommand{\tabcolsep}{3pt}%
& 
\begin{tabular}{c}
$\{0\}^\vee=Y_{22}, (O'_1)^\vee=Y_{21}$,$(O'_2)^\vee=O_{2}$,\\
$O_{1}^\vee=Y_{12}$, the rest are self-dual.
\\
\unitlength 0.70ex
\linethickness{0.4pt}
\begin{picture}(34.00,24.00)(-17.00,-11.00)
\put(-15.00,10.00){\makebox(0,0)[cc]{$\{0\}$}}
\put(0.00,10.00){\makebox(0,0)[cc]{$O_{1}$}}
\put(15.00,10.00){\makebox(0,0)[cc]{$O_{2}$}}

\put(-15.00,0.00){\makebox(0,0)[cc]{$O'_{1}\;\;$}}
\put(-15.00,-10.00){\makebox(0,0)[cc]{$O'_{2}\;\;$}}

\put(-6.00,3.00){\makebox(0,0)[cc]{$Z_{\scriptscriptstyle 11}\;$}}
\put(6.00,-3.00){\makebox(0,0)[cc]{$Y_{\scriptscriptstyle 11}$\;}}

\put(15.00,0.00){\makebox(0,0)[cc]{$Y_{\scriptscriptstyle 21}$}}

\put(0.00,-10.00){\makebox(0,0)[cc]{$Y_{\scriptscriptstyle 12}$}}
\put(15.00,-10.00){\makebox(0,0)[cc]{$Y_{\scriptscriptstyle 22}$}}

\put(-10.00,10.00){\vector(1,0){6.00}}
\put(4.00,10.00){\vector(1,0){6.00}}

\put(-1.50,8.00){\vector(-1,-1){3.50}}
\put(-14.00,0.50){\vector(2,1){4.00}}
\put(-2.00,1.00){\vector(3,-2){4.00}}
\put(2.50,-4.50){\vector(-1,-1){3.50}}
\put(7.00,-2.00){\vector(3,1){4.50}}

\put(-10.00,-10.00){\vector(1,0){6.00}}
\put(4.00,-10.00){\vector(1,0){6.00}}

\put(-15.00,7.50){\vector(0,-1){5.00}}
\put(15.00,7.50){\vector(0,-1){5.00}}

\put(-15.00,-2.50){\vector(0,-1){5.00}}
\put(15.00,-2.50){\vector(0,-1){5.00}}

\end{picture}   
\end{tabular}
\\

\hline

\newcase\label{tablecase: red_SpxSLxGL}
&
\begin{tabular}{c}
\unitlength 0.70ex
\linethickness{0.4pt}
\begin{picture}(41.50,12.00)(-20.00,-9.00)
\put(0.00,0.00){\makebox(0,0)[cc]
{
$(\Sp_{2n}\!\times\CC^\times\!)\!\times\!\SL_2\quad\times\;\;\GL_m\quad$}
}
\put(0.00,-8.00){\makebox(0,0)[cc]
{
$(\CC^{2n}\otimes\CC^2)\quad\oplus\quad(\CC^{2}\otimes\CC^{m})$}
}
\put(-11.50,-2.00){\line(-2,-3){2.50}}
\put(-2.00,-2.00){\line(-5,-3){6.00}}
\put(2.00,-2.00){\line(5,-3){6.00}}
\put(16.00,-2.00){\line(2,-3){2.50}}
\end{picture}
\end{tabular}
&
\renewcommand{\tabcolsep}{0pt}%
\begin{tabular}{c}
     $m\geq 2$,  \\
     $n\geq 2$ 
\end{tabular}
\renewcommand{\tabcolsep}{3pt}%
& 
\begin{tabular}{c}
$\{0\}^\vee=Y_{22,2}, (O'_1)^\vee=Y_{22,1}$,$(O'_2)^\vee=O_{22}$,\\
$O_{i0}^\vee=Y_{i0,2}$, $i=1,2$,\\
the rest are self-dual.
\\
\unitlength 0.70ex
\linethickness{0.4pt}
\begin{picture}(49.00,24.00)(-17.00,-11.00)
\put(-15.00,10.00){\makebox(0,0)[cc]{$\{0\}$}}
\put(0.00,10.00){\makebox(0,0)[cc]{$O_{10}$}}
\put(15.00,10.00){\makebox(0,0)[cc]{$O_{20}$}}
\put(30.00,10.00){\makebox(0,0)[cc]{$O_{22}$}}

\put(-15.00,0.00){\makebox(0,0)[cc]{$O'_{1}\;\;$}}
\put(-15.00,-10.00){\makebox(0,0)[cc]{$O'_{2}\;\;$}}

\put(-6.00,3.00){\makebox(0,0)[cc]{$Z_{\scriptscriptstyle 10,1}\;$}}
\put(6.00,-3.00){\makebox(0,0)[cc]{$Y_{\scriptscriptstyle 10,1}$}}

\put(15.00,0.00){\makebox(0,0)[cc]{$Y_{\scriptscriptstyle 20,1}$}}
\put(30.00,0.00){\makebox(0,0)[cc]{$Y_{\scriptscriptstyle 22,1}$}}

\put(0.00,-10.00){\makebox(0,0)[cc]{$Y_{\scriptscriptstyle 10,2}$}}
\put(15.00,-10.00){\makebox(0,0)[cc]{$Y_{\scriptscriptstyle 20,2}$}}
\put(30.00,-10.00){\makebox(0,0)[cc]{$Y_{\scriptscriptstyle 22,2}$}}

\put(-10.00,10.00){\vector(1,0){6.00}}
\put(4.00,10.00){\vector(1,0){6.00}}
\put(19.00,10.00){\vector(1,0){6.00}}

\put(-1.50,8.00){\vector(-1,-1){3.50}}
\put(-14.00,0.50){\vector(2,1){4.00}}
\put(-2.00,1.00){\vector(3,-2){4.00}}
\put(2.50,-4.50){\vector(-1,-1){3.50}}
\put(7.00,-2.00){\vector(3,1){4.50}}

\put(19.00,0.00){\vector(1,0){6.00}}

\put(-10.00,-10.00){\vector(1,0){6.00}}
\put(4.00,-10.00){\vector(1,0){6.00}}
\put(19.00,-10.00){\vector(1,0){6.00}}

\put(-15.00,7.50){\vector(0,-1){5.00}}
\put(15.00,7.50){\vector(0,-1){5.00}}
\put(30.00,7.50){\vector(0,-1){5.00}}

\put(-15.00,-2.50){\vector(0,-1){5.00}}
\put(15.00,-2.50){\vector(0,-1){5.00}}
\put(30.00,-2.50){\vector(0,-1){5.00}}

\end{picture}   
\end{tabular}
\\

\hline

\newcase\label{tablecase: red_SpxSLxSp}
&
\begin{tabular}{c}
\unitlength 0.70ex
\linethickness{0.4pt}
\begin{picture}(41.50,12.00)(-20.00,-9.00)
\put(0.00,0.00){\makebox(0,0)[cc]
{
$(\Sp_{2n}\!\times\CC^\times\!)\!\times\!\SL_2\!\times(\Sp_{2m}\!\times\CC^\times\!)$}
}
\put(0.00,-8.00){\makebox(0,0)[cc]
{
$(\CC^{2n}\otimes\CC^2)\quad\oplus\quad(\CC^{2}\otimes\CC^{2m})$}
}
\put(-11.50,-2.00){\line(-2,-3){2.50}}
\put(-2.00,-2.00){\line(-5,-3){6.00}}
\put(2.00,-2.00){\line(5,-3){6.00}}
\put(16.00,-2.00){\line(2,-3){2.50}}
\end{picture}
\end{tabular}
&
\renewcommand{\tabcolsep}{0pt}%
\begin{tabular}{c}
     $m\geq 2$,  \\
     $n\geq 2$ 
\end{tabular}
\renewcommand{\tabcolsep}{3pt}%
& 
\begin{tabular}{c}
$\{0\}^\vee=Y_{22,22}$, $O_{22}^\vee=O'_{22}$,
\\$O_{i0}^\vee=Y_{i0,22}$, $(O'_{i0})^\vee=Y_{22,i0}$, $i=1,2$,\\
the rest are self-dual.
\\
\unitlength 0.70ex
\linethickness{0.4pt}
\begin{picture}(49.00,34.00)(-17.00,-21.00)
\put(-15.00,10.00){\makebox(0,0)[cc]{$\{0\}$}}
\put(0.00,10.00){\makebox(0,0)[cc]{$O_{10}$}}
\put(15.00,10.00){\makebox(0,0)[cc]{$O_{20}$}}
\put(30.00,10.00){\makebox(0,0)[cc]{$O_{22}$}}

\put(-15.00,0.00){\makebox(0,0)[cc]{$O'_{10}$}}
\put(-15.00,-10.00){\makebox(0,0)[cc]{$O'_{20}$}}

\put(-15.00,-20.00){\makebox(0,0)[cc]{$O'_{22}$}}

\put(-6.00,3.00){\makebox(0,0)[cc]{$Z_{\scriptscriptstyle 11,11}$}}
\put(6.00,-3.00){\makebox(0,0)[cc]{$Y_{\scriptscriptstyle 11,11}$}}

\put(15.00,0.00){\makebox(0,0)[cc]{$Y_{\scriptscriptstyle 20,10}$}}
\put(30.00,0.00){\makebox(0,0)[cc]{$Y_{\scriptscriptstyle 22,10}$}}

\put(0.00,-10.00){\makebox(0,0)[cc]{$Y_{\scriptscriptstyle 10,20}$}}
\put(15.00,-10.00){\makebox(0,0)[cc]{$Y_{\scriptscriptstyle 20,20}$}}
\put(30.00,-10.00){\makebox(0,0)[cc]{$Y_{\scriptscriptstyle 22,20}$}}

\put(0.00,-20.00){\makebox(0,0)[cc]{$Y_{\scriptscriptstyle 10,22}$}}
\put(15.00,-20.00){\makebox(0,0)[cc]{$Y_{\scriptscriptstyle 20,22}$}}
\put(30.00,-20.00){\makebox(0,0)[cc]{$Y_{\scriptscriptstyle 22,22}$}}

\put(-10.00,10.00){\vector(1,0){6.00}}
\put(4.00,10.00){\vector(1,0){6.00}}
\put(19.00,10.00){\vector(1,0){6.00}}

\put(-1.50,8.00){\vector(-1,-1){3.50}}
\put(-14.00,0.50){\vector(2,1){4.00}}
\put(-2.00,1.00){\vector(3,-2){4.00}}
\put(2.50,-4.50){\vector(-1,-1){3.50}}
\put(6.00,-2.00){\vector(3,1){4.50}}

\put(19.00,0.00){\vector(1,0){6.00}}

\put(-10.00,-10.00){\vector(1,0){6.00}}
\put(4.00,-10.00){\vector(1,0){6.00}}
\put(19.00,-10.00){\vector(1,0){6.00}}

\put(-10.00,-20.00){\vector(1,0){6.00}}
\put(4.00,-20.00){\vector(1,0){6.00}}
\put(19.00,-20.00){\vector(1,0){6.00}}

\put(-15.00,7.50){\vector(0,-1){5.00}}
\put(15.00,7.50){\vector(0,-1){5.00}}
\put(30.00,7.50){\vector(0,-1){5.00}}

\put(-15.00,-2.50){\vector(0,-1){5.00}}
\put(15.00,-2.50){\vector(0,-1){5.00}}
\put(30.00,-2.50){\vector(0,-1){5.00}}

\put(-15.00,-12.50){\vector(0,-1){5.00}}
\put(0.00,-12.50){\vector(0,-1){5.00}}
\put(15.00,-12.50){\vector(0,-1){5.00}}
\put(30.00,-12.50){\vector(0,-1){5.00}}

\end{picture}   
\end{tabular}
\\

\hline

\newcase\label{tablecase: red_spin8}
&
\begin{tabular}{c}
\unitlength 0.70ex
\linethickness{0.4pt}
\begin{picture}(41.50,12.00)(-20.00,-9.00)
\put(0.00,0.00){\makebox(0,0)[cc]
{
$\CC^\times\!\times\!\Spin_8\!\times\CC^\times$}
}
\put(0.00,-8.00){\makebox(0,0)[cc]
{
$\CC^8_+\;\oplus\;\CC^8_-$}
}
\put(-8.50,-2.00){\line(2,-3){2.50}}
\put(-1.00,-2.00){\line(-2,-3){2.50}}
\put(1.00,-2.00){\line(2,-3){2.50}}
\put(8.50,-2.00){\line(-2,-3){2.50}}
\end{picture}
\end{tabular}
&
$-$
& 
\begin{tabular}{c}
$\{0\}^\vee=Y_{22}, (O'_1)^\vee=Y_{21}$,$(O'_2)^\vee=O_{2}$,\\
$O_{1}^\vee=Y_{12}$, the rest are self-dual.
\\
\unitlength 0.70ex
\linethickness{0.4pt}
\begin{picture}(34.00,24.00)(-17.00,-11.00)
\put(-15.00,10.00){\makebox(0,0)[cc]{$\{0\}$}}
\put(0.00,10.00){\makebox(0,0)[cc]{$O_{1}$}}
\put(15.00,10.00){\makebox(0,0)[cc]{$O_{2}$}}

\put(-15.00,0.00){\makebox(0,0)[cc]{$O'_{1}\;\;$}}
\put(-15.00,-10.00){\makebox(0,0)[cc]{$O'_{2}\;\;$}}

\put(-6.00,3.00){\makebox(0,0)[cc]{$Z_{\scriptscriptstyle 11}\;$}}
\put(6.00,-3.00){\makebox(0,0)[cc]{$Y_{\scriptscriptstyle 11}$\;}}

\put(15.00,0.00){\makebox(0,0)[cc]{$Y_{\scriptscriptstyle 21}$}}

\put(0.00,-10.00){\makebox(0,0)[cc]{$Y_{\scriptscriptstyle 12}$}}
\put(15.00,-10.00){\makebox(0,0)[cc]{$Y_{\scriptscriptstyle 22}$}}

\put(-10.00,10.00){\vector(1,0){6.00}}
\put(4.00,10.00){\vector(1,0){6.00}}

\put(-1.50,8.00){\vector(-1,-1){3.50}}
\put(-14.00,0.50){\vector(2,1){4.00}}
\put(-2.00,1.00){\vector(3,-2){4.00}}
\put(2.50,-4.50){\vector(-1,-1){3.50}}
\put(7.00,-2.00){\vector(3,1){4.50}}

\put(-10.00,-10.00){\vector(1,0){6.00}}
\put(4.00,-10.00){\vector(1,0){6.00}}

\put(-15.00,7.50){\vector(0,-1){5.00}}
\put(15.00,7.50){\vector(0,-1){5.00}}

\put(-15.00,-2.50){\vector(0,-1){5.00}}
\put(15.00,-2.50){\vector(0,-1){5.00}}

\end{picture}   
\end{tabular}
\\

\hline

\end{longtable}
}

\section{Properties of dual orbits}
\label{s: properties_of_dual_orbits}

Groups in this section are assumed to be connected but \textit{not} necessarily reductive.

We begin by proving Lemmas~\ref{lemma: same_orbits} and~\ref{lemma: duality_and_product}.

\begin{proof}[Proof of Lemma~\ref{lemma: same_orbits}]
Any $G$-orbit $O$ in $V$ is the union of some $G'$-orbits. Since $G'$ a is normal subgroup, $G$ acts on these $G'$-obits by permutations. These smaller orbits are pairwise isomorphic and hence cannot be proper subvarieties in $O$ since $V/G'$ is finite. The claim follows.
\end{proof}

\begin{proof}[Proof of Lemma~\ref{lemma: duality_and_product}]
Clearly, $N^*(O_1\times O_2)$ decomposes into the direct sum of vector bundles $N^*O_1\oplus N^*O_2$. Moreover, on the level of subsets of $(V_1\times V_1^*)\times(V_2\times V^*_2)$ the equality $N^*(O_1\times O_2)=N^*O_1 \times N^*O_2$ holds. The latter subset has dense intersection with $N^*O_1^\vee\times N^*O_2^\vee=N^*(O_1^\vee\times O_2^\vee)$ since this is true for each factor.
\end{proof}

Suppose that $G : V$ is a linear action with finitely many orbits, and $O\subset V$ is one of them. The next lemma follows by observing that $O^\vee$ can be identified by examining a general fiber of $N^*O$.

\begin{lemma}
\label{lemma: reformulation}
Given two orbits $O\subset V$ and $Q\subset V^*$, suppose that there exists a point $x\in O$ such that the intersection $Q\cap N^*_x O$ is dense in $N^*_x O$. Then $Q=O^\vee$.
\end{lemma}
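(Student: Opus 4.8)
The plan is to unwind the definition of Pyasetskii duality via irreducible components of the commuting variety $\CCC$, and then use a dimension count. Recall that $O^\vee$ is the unique orbit $Q' \subset V^*$ such that $\overline{N^*O} = \overline{N^*Q'}$ equals one of the components $\CCC_i$; equivalently, by Theorem~\cite{Py}, $\overline{N^*O}$ is irreducible of dimension $\dim V$ and its projection to $V^*$ has dense image in $\overline{Q'}$. So to show $Q = O^\vee$ it suffices to show that $\overline{N^*O}$, viewed inside $\CCC \subset V \oplus V^*$, has dense intersection with $N^*Q$ (or, symmetrically, that the projection $\overline{N^*O} \to V^*$ dominates $\overline{Q}$ and that $Q$ is the right orbit, i.e. the fibers match up in dimension).

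First I would fix a point $x \in O$ with $Q \cap N^*_x O$ dense in $N^*_x O$, as hypothesized. Since $\dim N^*_x O = \dim V - \dim O$ (the conormal space is the annihilator of the tangent space $T_x O = \g x$, which has dimension $\dim O$), density of $Q \cap N^*_x O$ forces $\dim (Q \cap N^*_x O) = \dim V - \dim O$. Now consider the restriction to the orbit $O$ of the conormal bundle: $N^*O \cap (O \times V^*) = N^*O$ is a vector bundle over $O$ of rank $\dim V - \dim O$, hence $\dim N^*O = \dim O + (\dim V - \dim O) = \dim V$. Consider the second projection $\pi \colon N^*O \to V^*$. Its image lands in $\overline{Q}$: indeed over the point $x$ the fiber $N^*_x O$ meets $Q$ in a dense subset, so $N^*_x O \subset \overline{Q}$, and since $G$ acts transitively on $O$ and $N^*O$ is $G$-stable, $\pi(N^*O) = G \cdot N^*_x O \subset \overline{Q}$. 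Thus $\overline{N^*O}$ maps into $\overline{Q}$ under the second projection.

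Next I would identify which orbit the closure $\overline{N^*O}$ "sits over." We have $\overline{N^*O}$ irreducible of dimension $\dim V$; it is contained in $\CCC$, hence by Theorem~\cite{Py} it is one of the components $\CCC_i$, and $\CCC_i = \overline{N^*O'}$ for the unique orbit $O'$ in $V$ with $\overline{N^*O'} \supset N^*O$, namely $O' = O$ itself (since $N^*O$ is already irreducible of dimension $\dim V$ and open-dense in its closure). So $\overline{N^*O} = \CCC_i$ for some $i$, and by the theorem this same component equals $\overline{N^*(O^\vee)}$, with the second projection giving a dominant map onto $\overline{O^\vee}$. Combining with the previous paragraph, $\overline{O^\vee} \subseteq \overline{Q}$. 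To get the reverse inclusion and conclude $Q = O^\vee$, I would compute dimensions: the general fiber of $\pi \colon \overline{N^*O} \to \overline{O^\vee}$ has dimension $\dim V - \dim O^\vee$; but over $x \in O$ the fiber of $\pi$ on $N^*O$ is exactly $N^*_x O$, which contains the dense subset $Q \cap N^*_x O$ of dimension $\dim V - \dim O$, and since $x$ maps into $\overline{O^\vee}$ we get $\dim V - \dim O \le \dim V - \dim O^\vee$, i.e. $\dim O^\vee \le \dim O$. On the other hand the point $x = \pi(x, y)$ for generic $y \in N^*_x O$ lies in $Q$, so $x \in \overline{Q} \cap \overline{O^\vee}$ forces (by the orbit-closure poset, since $\overline{O^\vee} \subseteq \overline{Q}$) either $O^\vee = Q$ or $\dim O^\vee < \dim Q$; the dimension count $\dim O^\vee \le \dim O$ together with the symmetric statement (swapping the roles of $V$ and $V^*$, $O$ and $O^\vee$) pins down $O^\vee = Q$.

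The main obstacle is making the last dimension-matching step fully rigorous rather than hand-wavy: one must be careful that "$Q \cap N^*_x O$ dense in $N^*_x O$" really does propagate to a statement about the generic fiber of $\pi \colon \overline{N^*O} \to \overline{Q}$, and not merely about a single special fiber. The clean way to do this is to note that $N^*O$ is $G$-homogeneous over $O$, so \emph{every} fiber over $O$ behaves like the fiber over $x$; hence the image $\pi(N^*O)$ is a single irreducible constructible set whose closure is $\overline{Q}$ (because it contains the dense subset $\bigcup_{gx \in O} g\cdot(Q \cap N^*_x O)$ of $Q$), which is enough. Then $\overline{N^*O} = \overline{N^*Q}$ as subsets of $V \oplus V^*$ — both being the closure of the $\dim V$-dimensional irreducible set $N^*O$ whose projections dominate $\overline{O}$ and $\overline{Q}$ respectively — and this is exactly the defining condition $\CCC_i = \overline{N^*O} = \overline{N^*Q}$, so $Q = O^\vee$ by the uniqueness clause in Theorem~\cite{Py}.
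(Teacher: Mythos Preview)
Your final paragraph contains a correct argument, but the third paragraph has genuine errors that you should simply discard rather than try to repair. You write ``the point $x = \pi(x,y)$'' where $\pi$ is the \emph{second} projection (so $\pi(x,y)=y$, not $x$), and then place $x$ --- a point of $V$ --- inside $\overline{Q}\cap\overline{O^\vee}\subset V^*$. The inequality $\dim O^\vee\le\dim O$ you extract from this is unsupported. Fortunately none of that is needed: paragraphs~2 and~4 already give a complete proof. From paragraph~2 you have $\pi(N^*O)\subset\overline{Q}$; from paragraph~4, the $G$-stable set $\bigcup_{g}g\cdot(Q\cap N^*_xO)$ is a nonempty $G$-stable subset of the orbit $Q$, hence equals $Q$, so $Q\subset\pi(N^*O)$. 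Thus $\overline{\pi(N^*O)}=\overline{Q}$, and since the component $\CCC_i=\overline{N^*O}$ is uniquely determined by the closure of its second projection (namely $\overline{O^\vee}$), you get $Q=O^\vee$.

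The paper's proof is shorter because it exploits an observation you never make: by the symmetric description of the commuting variety, any pair $(x,y)$ with $y\in Q\cap N^*_xO$ automatically lies in $N^*Q$ as well (since $y\in(\g x)^\perp$ is equivalent to $x\in(\g y)^\perp$). Hence $\{x\}\times(Q\cap N^*_xO)\subset N^*O\cap N^*Q$, and sweeping by $G$ produces a subset of $N^*O\cap N^*Q$ that is dense in $N^*O$. This gives $\overline{N^*O}\subset\overline{N^*Q}$ directly, and equality follows since both are irreducible of dimension $\dim V$. Your route via the second projection works but requires the extra step of knowing that the bijection in Pyasetskii's theorem can be read off from the closure of the projected image; the paper's route sidesteps projections entirely.
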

\begin{proof}
The intersection $Q\cap N^*_x O \subset \{x\}\times V^*$ is a subset of $N^*Q$. Under the diagonal action $G : V\oplus V^*$ it spans a subvariety of $N^*O\cap N^*Q$ which is dense in $\overline{N^*O}$.
\end{proof}

Suppose that there is a group $H$ and a homomorphism $G\to H$ such that the representation of $G$ is decomposed as $G\to H\to \GL(V)$. Clearly, $H$ also has finitely many orbits, and every $H$-orbit contains a unique open $G$-orbit. It turns out that duality for $H$-orbits is related to duality for some $G$-orbits:

\begin{lemma}
\label{lemma: duality_for_homomorphism}
Let $G\to H\to \GL(V)$ be representations of $G$ and $H$ with finitely many orbits. If mutually dual orbits $O\subset V$ and $O^\vee\subset V^*$ are open in $HO$ and $HO^\vee$ respectively, then  $(H\cdot O)^\vee=H\cdot O^\vee$. Conversely, if two $G$-orbits in $V$ and $V^*$ are open in mutually dual $H$-orbits, then they are dual to each other.
\end{lemma}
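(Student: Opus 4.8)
The plan is to reduce the statement to the defining property of Pyasetskii duality from Theorem~1.1: the irreducible components of the commuting variety are exactly the closures $\overline{N^*O'}$ of conormal bundles of orbits, and $(O')^\vee$ is by definition the unique orbit $Q\subseteq V^*$ for which $\overline{N^*Q}=\overline{N^*O'}$ inside $V\oplus V^*$ (and symmetrically with the roles of $V$ and $V^*$ exchanged). The geometric input I would isolate first is the following: \emph{if a $G$-orbit $O\subseteq V$ is open in the $H$-orbit $HO$, then $\overline{N^*O}=\overline{N^*(HO)}$ as subvarieties of $V\oplus V^*$}; call this property $(\ast)$. Since $H$ is connected, $HO$ is irreducible and $O$ is therefore dense in it; both are smooth and $T_xO=T_x(HO)$ for every $x\in O$, so the conormal spaces agree over $O$, and $N^*O$ --- being the restriction of $N^*(HO)$ over the dense open subset $O$ --- is dense in $N^*(HO)$; this gives $(\ast)$. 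By Theorem~1.1 applied separately to $G$ and to $H$, the two sides of $(\ast)$ are precisely the components of $\CCC^G$ and of $\CCC^H$ indexed by $O$ and by $HO$, so $(\ast)$ identifies these components inside $V\oplus V^*$. The analogous statement holds for $G$-orbits in $V^*$ that are open in their $H$-orbits.

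Granting $(\ast)$, the lemma is a short chain of equalities of subvarieties of $V\oplus V^*$. For the first assertion, assume $O$ is open in $HO$ and $O^\vee$ is open in $HO^\vee$, and write
\[
\overline{N^*(HO)}\;=\;\overline{N^*O}\;=\;\overline{N^*(O^\vee)}\;=\;\overline{N^*(HO^\vee)},
\]
where the outer equalities are $(\ast)$ applied to $O$ and to $O^\vee$, and the middle one is the definition of duality for $G$; reading this through the definition of duality for $H$ yields $(HO)^\vee=HO^\vee$. For the converse, let $O\subseteq V$ and $Q\subseteq V^*$ be $G$-orbits that are open, respectively, in the $H$-orbits $HO$ and $HQ$, with $(HO)^\vee=HQ$; then
\[
\overline{N^*O}\;=\;\overline{N^*(HO)}\;=\;\overline{N^*(HQ)}\;=\;\overline{N^*Q},
\]
using $(\ast)$ on the two ends and the definition of duality for $H$ in the middle, and this equality is exactly the statement $O^\vee=Q$.

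The step I expect to be the main obstacle is the careful verification of $(\ast)$: that at points of $O$ the conormal spaces of $O$ and of $HO$ coincide (this is $T_xO=T_x(HO)$, which holds because $O$ is open in $HO$ and both are smooth), that $N^*O$ is then dense in $N^*(HO)$ (using that $HO$ is irreducible with $O$ dense), and --- crucially --- that $\overline{N^*O}$ really is an irreducible component of $\CCC^G$ rather than a proper piece of one, which is precisely what Theorem~1.1 supplies and is where that theorem enters. As an alternative to $(\ast)$ one can argue fiberwise with Lemma~\ref{lemma: reformulation}: for $x\in O$ one has $N^*_x(HO)=N^*_xO$, and since a general fiber $N^*_xO$ meets $O^\vee\subseteq HO^\vee$ densely it also meets $HO^\vee$ densely, so Lemma~\ref{lemma: reformulation} gives $(HO)^\vee=HO^\vee$; the converse direction is handled in the same way. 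The only extra ingredient for this variant is the remark dual to Lemma~\ref{lemma: reformulation} --- that a general fiber of $N^*O$ does meet $O^\vee$ densely --- which again follows from $\overline{N^*O}=\overline{N^*(O^\vee)}$ having dimension $\dim V$.
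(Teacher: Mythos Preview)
Your proof is correct and follows essentially the same approach as the paper. The paper's argument is terser but identical in spirit: it isolates the observation that if $Y$ is open in a smooth subvariety $X\subset V$ then $N^*Y$ is open in $N^*X$ (your property~$(\ast)$ in the form ``same closure''), and then notes that $N^*(HO)$ and $N^*(HQ)$ have dense intersection if and only if $N^*O$ and $N^*Q$ do, which yields both directions at once.
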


\begin{proof}
First, we observe that, given an open subset $Y$ in a smooth subvariety $X\subset V$, the total space $N^*Y$ of the conormal bundle is open in $N^*X\subset V\oplus V^*$.

Now, assume that $G$-orbits $O\subset V$ and $Q\subset V^*$ are open in the corresponding $H$-orbits $HO$ and $HQ$. It follows that the conormal bundles $N^*(HO)$ and $N^*(HQ)$ have dense intersection if and only if the same holds for $N^*O$ and $N^*Q$. This implies both claims.
\end{proof}

In the case where the representation $V=V_1\oplus V_2$ is reducible, the products of $G$-orbits in $V_i$ coincide with the orbits of the group $H=G\times G$. Then, along with Lemma~\ref{lemma: duality_and_product}, Lemma~\ref{lemma: reformulation} applied to the diagonal embedding $G\to H$ provides the following

\begin{lemma}
\label{lemma: duality_for_reducible}
Suppose that a group $G$ acts linearly on $V_1$ and $V_2$ with finitely many orbits, and $O_i\in V_i/G$. Then the open orbits in the products $O_1\times O_2$ and $O_1^\vee\times O_2^\vee$ are dual to each other.
\end{lemma}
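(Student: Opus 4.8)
The plan is to deduce the statement from Lemma~\ref{lemma: duality_and_product} applied to the group $H=G\times G$, together with the transfer of duality along the diagonal homomorphism $G\to H$ furnished by Lemma~\ref{lemma: duality_for_homomorphism}. As noted just above, the $H$-orbits in $V=V_1\oplus V_2$ are exactly the products $O_1\times O_2$ of $G$-orbits $O_i\subset V_i$, and the $H$-orbits in $V^*=V_1^*\oplus V_2^*$ are the products of $G$-orbits in the $V_i^*$; since $G$ has finitely many orbits in $V$ (which is the case in every situation where the lemma is invoked, $V$ there being a spherical module), each such $H$-orbit is irreducible and contains a unique dense $G$-orbit.

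First I apply Lemma~\ref{lemma: duality_and_product} with $G_1=G_2=G$: for the group $H=G\times G$ one has $(O_1\times O_2)^\vee=O_1^\vee\times O_2^\vee$, the duality taken between the $H$-modules $V$ and $V^*$. Thus the $H$-orbit $O_1\times O_2\subset V$ and the $H$-orbit $O_1^\vee\times O_2^\vee\subset V^*$ are mutually dual for $H$. Next, the $G$-action on $V$ (and on $V^*$) factors as $G\to H\to\GL(V)$ through the diagonal, and both $G$ and $H$ have finitely many orbits in $V$. The dense $G$-orbit of $O_1\times O_2$ is open in the $H$-orbit $O_1\times O_2$, and the dense $G$-orbit of $O_1^\vee\times O_2^\vee$ is open in the dual $H$-orbit; the converse assertion of Lemma~\ref{lemma: duality_for_homomorphism} then says these two $G$-orbits are dual, which is precisely the claim.

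No real obstacle arises once the two quoted lemmas are available; the only point deserving a word is the implicit hypothesis that $G$ has finitely many orbits on $V=V_1\oplus V_2$ --- needed both for Pyasetskii duality on $V$ to be defined and for each $H$-orbit to carry a dense $G$-orbit --- which holds throughout the applications. If one prefers not to route through Lemma~\ref{lemma: duality_for_homomorphism}, the same conclusion follows directly from Lemma~\ref{lemma: reformulation}: choose $x=(x_1,x_2)$ with each $x_i$ general in $O_i$ and $x$ in the dense $G$-orbit $Y$ of $O_1\times O_2$, so that $N^*_x Y=N^*_x(O_1\times O_2)=N^*_{x_1}O_1\oplus N^*_{x_2}O_2$; by the defining property of $O_i^\vee$ and the fact that $N^*O_i$ is a vector bundle over $O_i$, the set $(O_1^\vee\times O_2^\vee)\cap N^*_x Y$ is dense in $N^*_x Y$, and a dimension count on the fibers of the conormal bundles shows that for general such $x$ even the dense $G$-orbit of $O_1^\vee\times O_2^\vee$ meets $N^*_x Y$ densely; Lemma~\ref{lemma: reformulation} then finishes. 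In this second route the one slightly delicate step is exactly this last genericity claim, which one checks by noting that the conormal bundles of the dense $G$-orbits of $O_1\times O_2$ and of $O_1^\vee\times O_2^\vee$ have the same closure, namely the component of the commuting variety of $H$ singled out in the first paragraph.
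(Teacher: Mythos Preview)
Your proposal is correct and follows essentially the same route as the paper: set $H=G\times G$ so that the $H$-orbits in $V_1\oplus V_2$ are the products $O_1\times O_2$, invoke Lemma~\ref{lemma: duality_and_product} to get $(O_1\times O_2)^\vee=O_1^\vee\times O_2^\vee$ for $H$, and then pass to $G$ via the diagonal $G\to H$ using (the converse part of) Lemma~\ref{lemma: duality_for_homomorphism}. Your remark that the statement tacitly assumes $G$ has finitely many orbits on $V_1\oplus V_2$ is well taken; this holds in all applications.
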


We conclude the section by observing that Lemmas~\ref{lemma: duality_for_homomorphism} and~\ref{lemma: duality_for_reducible} hold true under the assumptions of Remark~\ref{rm: equivalent_orbits}, where dual orbits are assumed to lie in the same space $V$.

\section{Orbits}
\label{s: orbits}

In this section, we utilize some data listed in~\cite{Kn_MFS} for every indecomposable saturated spherical module (namely, weights of basic semi-invariants and simple reflections generating the little Weyl group, see~\S\,\ref{ss: orbits_spherical_theory}) to build an abstract orbit diagram not specifying the explicit vertex-orbit correspondence. In general, to figure out this correspondence one needs to specify basic semi-invariant functions. We develop a different approach by combining an abstract orbit diagram with an explicit description of orbits.

For every representation in Tables~\ref{table: irreducible_modules} and~\ref{table: reducible_modules} its orbit decomposition is obtained in three steps: (I) Determine an abstract orbit diagram (\S\,\ref{ss: orbits_spherical_theory}); (II) Give an explicit description of the orbits (\S\,\ref{ss: orbits_description}); (III) Combine steps I and II (\S\,\ref{ss: orbits_convergence}).

\subsection{Spherical modules}
\label{ss: orbits_spherical_theory}

We already mentioned that a spherical representation possesses only finitely many orbits. Geometry of orbits is governed by some combinatorial data. Relevant results of the theory of spherical representations are briefly formulated below. Inspection of all the cases in the tables based on these results justifies the following lemma:

\begin{lemma}
\label{lemma: abstract_orbits}
The diagram in the last column of \textup{Tables}~\textup{\ref{table: irreducible_modules}} and~\textup{\ref{table: reducible_modules}} is the abstract orbit diagram of the corresponding indecomposable saturated spherical representation.
\end{lemma}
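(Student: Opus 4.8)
The plan is to establish Lemma~\ref{lemma: abstract_orbits} by a case-by-case verification that leans entirely on the standard combinatorial invariants attached to a spherical module, rather than on any explicit computation with semi-invariant functions. First I would recall the relevant machinery from the theory of spherical modules (following Knop~\cite{Kn_MFS} and the surrounding literature): for an indecomposable saturated spherical representation $G:V$ one has a \emph{weight monoid} of $\CC[V]$ together with a distinguished set of \emph{basic semi-invariants} whose weights $\sigma_1,\dots,\sigma_k$ generate it; one also has the \emph{little Weyl group} $W_V$, realized as a finite reflection group acting on the span of these weights, together with a set of \emph{spherical roots} (or, equivalently, the simple reflections generating $W_V$). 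The key structural fact, which I would state precisely and then invoke, is that the $G$-orbits in $V$ are in order-reversing (or order-preserving, depending on conventions) bijection with a combinatorially defined poset built from the faces of a certain cone — concretely, the orbit closures correspond to the ``$W_V$-orbits of faces'' of $\cone(\sigma_1,\dots,\sigma_k)$, or dually to intersections of zero loci of subsets of the basic semi-invariants that are compatible with the little Weyl group action. This is exactly the data that ``step~(I)'' in the outline of \S\,\ref{s: orbits} refers to.

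With that dictionary in place, the proof of the lemma is a finite inspection. For each row of Table~\ref{table: irreducible_modules} and each row (and each parity/size subcase) of Table~\ref{table: reducible_modules}, I would: (i) read off from~\cite{Kn_MFS} the list of basic semi-invariants and their weights, and the generators of the little Weyl group; (ii) compute the resulting poset of orbit closures purely combinatorially (number of orbits, covering relations); and (iii) check that the Hasse diagram so obtained coincides with the diagram drawn in the last column of the corresponding table entry. For the irreducible cases this is short: the classical families $\GL_q\times\GL_p:\CC^q\otimes\CC^p$, $\GL_n:\Sym^2\CC^n$, $\GL_n:\wedge^2\CC^n$ give totally ordered chains of length $r+1$ with $r=p$, $r=n$, $r=\lfloor n/2\rfloor$ respectively (rank-stratification of matrices, symmetric matrices, skew matrices), and the exceptional/spin rows $\E_6\times\CC^\times:\CC^{27}$, $\SO_n\times\CC^\times:\CC^n$, $\Gtwo\times\CC^\times:\CC^7$, $\Spin_7\times\CC^\times:\CC^8$, $\Spin_9\times\CC^\times:\CC^{16}$, $\Spin_{10}\times\CC^\times:\CC^{16}$, together with the $\Sp_{2n}\times\GL_m$ families, are handled by quoting the known low-rank orbit structures. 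For the reducible cases the combinatorial poset is genuinely two-dimensional (products $O_i\times O'_j$ splitting into $Y$- and $Z$-pieces), and the subtlety flagged in Remark~\ref{rm: orbit_graph} — that the diagram depends on the parity of $n$ in rows~\ref{tablecase: red_GLxC} and~\ref{tablecase: red_GLxC*}, and that the $Z$-stratum further splits in case~\ref{tablecase: red_SpxCxC} — must be matched against the corresponding feature of the combinatorial data (e.g.\ a spherical root that degenerates, or a basic semi-invariant whose vanishing becomes non-reduced, in the exceptional parity).

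The order of work would be: present the general statement linking orbits to the cone-of-weights/little-Weyl-group combinatorics (with a pointer to~\cite{Kn_MFS} for the underlying classification data); then dispatch Table~\ref{table: irreducible_modules} row by row; then dispatch Table~\ref{table: reducible_modules} row by row, treating the parity subcases of rows~\ref{tablecase: red_GLxC}, \ref{tablecase: red_GLxC*} and the splitting in~\ref{tablecase: red_SpxCxC} explicitly. I expect the main obstacle to be neither conceptual nor individually hard, but rather bookkeeping: one must be careful that the poset extracted from the little Weyl group genuinely reproduces the \emph{covering} relations (not merely the set of orbits and the top/bottom), and that the identification of abstract vertices with concrete strata is consistent across the reducible-case subcases where $Y$ and $Z$ labels interleave. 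A secondary delicate point is to confirm that in the degenerate-parity situations of Remark~\ref{rm: orbit_graph} the combinatorial data really does predict the collapse $Z_k=\varnothing$ and the altered inclusions, since here the naive product poset and the true orbit poset differ; resolving this amounts to tracking exactly which face of the relevant cone is or is not hit by a $G$-orbit, which I would verify by comparing dimensions via item~1 of the Pyasetskii theorem together with the explicit orbit descriptions obtained in \S\,\ref{ss: orbits_description}.
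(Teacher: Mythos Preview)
Your proposal is essentially the paper's own approach: quote the basic weights and the simple reflections of the little Weyl group from Knop's tables~\cite{Kn_MFS}, build the relevant cone, read off the poset of admissible faces, and verify case by case that it matches the drawn diagram; the paper states this as a r\'esum\'e and works one case (Example~\ref{exmpl: Sp_GL3}) in full.

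One point to sharpen: your description of the orbit--face correspondence as ``$W_V$-orbits of faces of $\cone(\sigma_1,\dots,\sigma_k)$'' is not the correct criterion. In the Luna--Vust picture the orbits correspond to the \emph{colored faces} of $(\mathcal{C}(V),\mathcal{D}(V))$, i.e.\ those faces of the simplicial cone $\mathcal{C}(V)=\cone(v_1,\dots,v_r)$ whose relative interior meets the valuation cone~$\mathcal{V}$; since $\mathcal{V}$ is a fundamental chamber for $W_V$, this is governed by the spherical roots (the outer normals of~$\mathcal{V}$), and that is exactly what you must check face by face. Getting this condition right is what produces, for instance, the non-linear diagram in row~\ref{tablecase: irr_Sp_GL3} and the parity-dependent collapse of $Z_k$ in rows~\ref{tablecase: red_GLxC}--\ref{tablecase: red_GLxC*}. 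Also, your final paragraph mixes in dimension counts and the explicit orbit descriptions of \S\,\ref{ss: orbits_description}; in the paper's logic those belong to steps~(II) and~(III), whereas Lemma~\ref{lemma: abstract_orbits} is purely step~(I) and is established from the combinatorial data alone.
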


Let $G$ be a connected reductive group. Fix a Borel subgroup $B\subset G$ along with a maximal torus $T\subset B$. Let $V$ be a finite-dimensional $G$-module. Recall that $V$ is \textit{spherical} if each simple $G$-module occurs in $\CC[V]$ with multiplicity at most one. Notice that for any $G$-submodule $V_0\subset V$ the corresponding restriction map $\CC[V]\to\CC[V_0]$ is a surjective $G$-homomorphism and hence takes isotypic components onto isotypic components. Thus we obtain

\begin{lemma}[e.g., {\cite[Lemma 2.1]{Lea}}]
\label{lemma: sphericity_for_submodules}
Every simple submodule of a spherical module is spherical.
\end{lemma}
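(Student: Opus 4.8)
The statement to prove is Lemma~\ref{lemma: sphericity_for_submodules}: every simple submodule of a spherical module is spherical.

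The plan is to exploit the hint already half-spelled-out in the text: for a $G$-submodule $V_0 \subset V$, the inclusion $V_0 \hookrightarrow V$ dualizes to a surjective $G$-equivariant algebra homomorphism $\CC[V] \twoheadrightarrow \CC[V_0]$ given by restriction of polynomial functions. First I would make this precise: identifying $\CC[V] = \Sym(V^*)$, the restriction map is induced by the surjection $V^* \twoheadrightarrow V_0^*$ dual to the inclusion; it is surjective because polynomials on a subspace extend (equivalently, pick a $G$-stable complement, which exists by reductivity, though surjectivity does not even need that). Being a surjection of $G$-modules, it carries the $\lambda$-isotypic component of $\CC[V]$ onto the $\lambda$-isotypic component of $\CC[V_0]$ for every dominant weight $\lambda$, since a surjective morphism of $G$-modules restricts to a surjection on each isotypic piece (semisimplicity of the category of rational $G$-modules for reductive $G$).

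From here the argument is immediate: the multiplicity of a simple module $W_\lambda$ in $\CC[V_0]$ is the dimension of $\Hom_G(W_\lambda, \CC[V_0])$, and since $\CC[V]_{(\lambda)} \twoheadrightarrow \CC[V_0]_{(\lambda)}$ is surjective, $\dim \Hom_G(W_\lambda, \CC[V_0]) \le \dim \Hom_G(W_\lambda, \CC[V])$. The right-hand side is at most $1$ by sphericity of $V$, hence so is the left-hand side, which is exactly the statement that $V_0$ is spherical. Note this works for an arbitrary $G$-submodule, not only a simple one; the simplicity in the statement is incidental to how the lemma is used later.

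I do not anticipate a genuine obstacle here — the only point requiring a word of care is the claim that a surjection of rational $G$-modules induces a surjection on isotypic components, which follows from complete reducibility of finite-dimensional rational representations of a reductive group (applied degree by degree, since $\CC[V]$ is a graded union of finite-dimensional $G$-submodules). One should also remark that $\CC[V]$ decomposes as a direct sum of its isotypic components in the graded sense, so that ``takes isotypic components onto isotypic components'' is literally correct rather than merely up to completion. Everything else is formal.
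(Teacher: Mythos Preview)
Your argument is correct and is exactly the paper's approach: the paragraph immediately preceding the lemma already records that the restriction map $\CC[V]\to\CC[V_0]$ is a surjective $G$-homomorphism taking isotypic components onto isotypic components, and the lemma is stated as an immediate consequence. Your additional remarks (surjectivity via $\Sym(V^*)\twoheadrightarrow\Sym(V_0^*)$, complete reducibility degree by degree, and that simplicity of $V_0$ is not needed) merely flesh out the one-line justification the paper gives.
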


An important geometrical characterization of spherical representations is provided by

\begin{theorem*}[{\cite{VK}}, {\cite[Theorem 3.1]{Kn_MFS}}]
A $G$-module V is spherical if and only if $B$ has an open orbit in $V$.
\end{theorem*}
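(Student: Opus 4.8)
The plan is to reduce each side of the equivalence to a statement about $B$-semi-invariant polynomials on $V$, and then bridge the two via the field $\CC(V)^B$ of $B$-invariant rational functions. Write $U$ for the unipotent radical of $B$.

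\textbf{Translating the right-hand side.} Since $V$ is an irreducible variety (its coordinate ring is a polynomial ring, hence a domain) and $B$ is connected, Rosenlicht's theorem gives: $B$ has a dense (equivalently, open) orbit in $V$ if and only if $\CC(V)^B=\CC$, i.e. there is no nonconstant $B$-invariant rational function on $V$. So it suffices to show that $V$ is spherical if and only if $\CC(V)^B=\CC$.

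\textbf{Translating the left-hand side.} For any rational $G$-module $M$, evaluation at a highest-weight vector identifies $\Hom_G(V(\lambda),M)$ with the subspace $M^U_\lambda$ of $U$-invariant $T$-eigenvectors of weight $\lambda$; moreover any nonzero $U$-invariant $T$-eigenvector is killed by the positive root subgroups, hence is a highest-weight vector, so its weight is automatically dominant. Applying this with $M=\CC[V]$, the multiplicity of each simple summand of $\CC[V]$ equals the dimension of the corresponding weight space of $\CC[V]^U$; and the nonzero vectors of $\CC[V]^U_\lambda$ are precisely the $B$-semi-invariant polynomials of the associated weight. Consequently, $V$ is spherical if and only if, for every $B$-character $\chi$, the space of $B$-semi-invariant polynomials of weight $\chi$ has dimension at most $1$, i.e. any two $B$-semi-invariants of the same weight are proportional.

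\textbf{The bridge.} The key observation is that every $h\in\CC(V)^B$ is a ratio $f_1/f_2$ of two $B$-semi-invariant polynomials of the same weight. Indeed, write $h=f_1/f_2$ with $f_1,f_2\in\CC[V]$ coprime, which is possible since $\CC[V]$ is a UFD; for $b\in B$ the polynomials $b\cdot f_1,\ b\cdot f_2$ are again coprime and satisfy $(b\cdot f_1)/(b\cdot f_2)=f_1/f_2$, whence $b\cdot f_i=c(b)f_i$ with $c(b)\in\CC^\times$, and $b\mapsto c(b)$ is a character of $B$. Now both implications follow. If $\CC(V)^B=\CC$ but $V$ were not spherical, two linearly independent $B$-semi-invariants $f_1,f_2$ of equal weight would give the nonconstant invariant $f_1/f_2$, a contradiction. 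Conversely, if $V$ is spherical, any $h\in\CC(V)^B$ equals $f_1/f_2$ with $f_1,f_2$ semi-invariant of one common weight $\chi$; sphericity forces $f_1$ to be a scalar multiple of $f_2$, so $h\in\CC$ and $\CC(V)^B=\CC$.

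The only point needing genuine care is the dictionary in the second paragraph between isotypic multiplicities in $\CC[V]$ and weight spaces of $\CC[V]^U$ (and the automatic dominance of the occurring weights); the coprimality argument and the appeal to Rosenlicht's theorem are then formal. This is exactly the classical criterion of Vinberg–Kimura, recorded in~\cite{VK} and~\cite[Theorem 3.1]{Kn_MFS}.
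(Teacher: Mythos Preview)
Your argument is correct and is essentially the standard proof of this classical criterion. Note, however, that the paper does not prove this theorem at all: it is stated with attribution to~\cite{VK} and~\cite{Kn_MFS} and used as background for the subsequent discussion of the weight lattice and valuation cone. There is therefore no proof in the paper to compare yours against; you have simply supplied what the paper deliberately omits.

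One minor correction: in your closing sentence you attribute the result to ``Vinberg--Kimura,'' but the reference~\cite{VK} is Vinberg--Kimel'fel'd. Kimura is associated with the Sato--Kimura theory of prehomogeneous vector spaces, which is related but distinct.
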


Now assume that the representation $V$ is spherical. One associates with it the \textit{weight lattice} $\Lambda=\Lambda(V)$, a sublattice in the character lattice $\mathfrak{X}(B)$ of $B$ consisting of the characters $\lambda\in\mathfrak{X}(B)$ such that the subspace
\begin{equation*}
\CC(V)^{(B)}_{\lambda}=\{f\in\CC(V)\mid b\cdot f=\lambda(b)f, \forall b\in B\}
\end{equation*}
of $B$-semi-invariant rational functions with the weight $\lambda$ is nonzero. Let us also denote the corresponding dual $\QQ$-vector space by $\mathcal{E}=\mathcal{E}(V)=\Hom_\ZZ(\Lambda,\QQ)$. Put
\begin{equation*}
\mathfrak{t}(\QQ)=\{\xi\in\mathfrak{t}\mid d\lambda(\xi)\in\QQ,\forall\lambda\in\mathfrak{X}(T)\},
\end{equation*}
where $\mathfrak{t}$ denotes the Lie algebra of the torus $T$. After identifying the characters of $T$ with their differentials, and the groups $\mathfrak{X}(B)$ and $\mathfrak{X}(T)$ via restriction, $\mathfrak{t}(\QQ)^*$ serves as a natural ambient rational vector space containing the lattice $\Lambda$. The space $\mathcal{E}$, in turn, is equipped with the canonical projection from $\mathfrak{t}(\QQ)$. Since $B$ has an open orbit in $V$, there exists a unique up to proportionality rational function $f_\lambda\in\CC(V)^{(B)}_{\lambda}$ for each $\lambda\in\Lambda$. $B$-stable prime divisors in $V$ are defined by $B$-semi-invariant regular functions; weights of these functions are linearly independent and generate the lattice $\Lambda$. The rank of $\Lambda$ is called the \textit{rank} of $V$.

Each discrete $\QQ$-valued valuation of the field $\CC(V)$ vanishing on $\CC^\times$ determines an element $\overline{v}\in\mathcal{E}$, such that $\pairing{\overline{v}}{\lambda}=v(f_\lambda)$ for all $\lambda\in\Lambda$. It is known (see~\cite{Kn_LV}) that the restriction of the map $v\mapsto\overline{v}$ to the set of $G$-invariant discrete $\QQ$-valued valuations of $\CC(V)$ vanishing on $\CC^\times$ is injective. Its image $\mathcal{V}=\mathcal{V}(V)$ in $\mathcal{E}$ is a finitely generated convex cone of full dimension; it is called the \textit{valuation cone} of $V$. Primitive outer normal vectors in $\Lambda$ of the cone $\mathcal{V}$ are called \textit{spherical roots}. They are linearly independent and form the system of simple roots of some root system in $\Lambda$.

Also, there exists a subgroup $W_V$ of the Weyl group of the root system of $G$ called \textit{little Weyl group}. It acts on $\Lambda\otimes_\ZZ\QQ$ as a crystallographic reflection group preserving the lattice $\Lambda$, and the valuation cone $\mathcal{V}$ is its fundamental chamber in $\mathcal{E}$ containing the image of the negative Weyl chamber under the projection from $\mathfrak{t}(\QQ)$ (see \cite[Theorem 7.4]{Kn_ICM}). Notice that the group $W_V$ is generated by reflections along the spherical roots.

A prime divisor $D\subset V$ defines a discrete valuation $v_D$ of $\CC(V)$ by assigning to any nonzero function $f$ its order along $D$.

Let $\mathcal{D}^B=\mathcal{D}^B(V)$ (resp. $\mathcal{D}^G=\mathcal{D}^G(V)$) be the set of $B$-stable (resp. $G$-stable) prime divisors in $V$, and put $\mathcal{D}=\mathcal{D}(V)=\mathcal{D}^B\backslash\mathcal{D}^G$. Elements of the set $\mathcal{D}$ are called \textit{colors} of $V$. To a $G$-stable subvariety $Y$ we associate the set $\mathcal{D}_Y=\{D\in\mathcal{D}\mid D\supset Y\}$ as well as the convex cone $\mathcal{C}_Y\subset\mathcal{E}$ generated by $\overline{v_D}$ over all $D\subset\mathcal{D}^B$ containing $Y$.

A \textit{colored cone} is a pair $(\mathcal{C}, \mathcal{F})$ consisting of a subset $\mathcal{F}\subset\mathcal{D}$ and a convex cone $\mathcal{C}\subset\mathcal{E}$ subject to the following conditions: (i) $\mathcal{C}$ is generated by $\mathcal{F}$ and finitely many elements of $\mathcal{V}$; (ii) the relative interior of $\mathcal{C}$ intersects $\mathcal{V}$.

A \textit{colored face} of a colored cone $(\mathcal{C}, \mathcal{F})$ is a colored cone $(\mathcal{C}', \mathcal{F}')$, where $\mathcal{C}'$ is a face of $\mathcal{C}$ and $\mathcal{F}'\subset\mathcal{F}$ consists of the colors $D\in \mathcal{F}$ with $\overline{v_D}\in\mathcal{C}'$.

The representation $V$ defines a colored cone $(\mathcal{C}(V),\mathcal{D}(V))$, where $\mathcal{C}(V) = \mathcal{C}_{\{0\}}$ is the cone generated by all $\overline{v_D}$, $D\in\mathcal{D}^B$. Colored faces of this colored cone are in bijection with $G$-orbits in $V$ (see \cite[Lemma 3.2]{Kn_LV}). Namely, an orbit $O$ corresponds to the colored face $(\mathcal{C}_O, \mathcal{D}_O)$. Moreover, the inclusion $O_1\subset\overline{O_2}$ holds if and only if $\mathcal{C}_{O_2}\subset\mathcal{C}_{O_1}$.
\bigbreak
\noindent
\textit{R\'esum\'e:}
\label{resume}
Let $r$ be the rank of the spherical module $V$, and $f_{\lambda_1},\dots,f_{\lambda_r}$ be the equations of prime divisors $D_i\subset V$ lying in the complement to the open $B$-orbit. The weights $\lambda_1,\dots,\lambda_r$ form a basis of the lattice $\Lambda$. As each element $\overline{v_{D_i}}$ takes zero value on any $\lambda_j$ with $j\neq i$ and is equal to $1$ on $\lambda_i$, the cone $\mathcal{C}(V)$ is generated in $\mathcal{E}$ by vectors of the basis $v_i$ dual to $\lambda_i$. Finally, let $w_1,\dots,w_k$ denote the simple reflections generating $W_V$. The associated spherical roots determine faces of the cone $\mathcal{V}$. In~\cite{Kn_MFS} the weights $\lambda_i$ and the elements $w_j$ are listed for every indecomposable saturated spherical module. The Hasse diagram of the colored faces of $\mathcal{C}(V)$ with its arrows reversed represents the desired orbit diagram.

\begin{example}
\label{exmpl: Sp_GL3}
Consider $G=\Sp_{2n}\times\GL_3 : V=\CC^{2n}\otimes\CC^3$. Here $V$ is identified with the space of $(2n\times 3)$-matrices $x=(x_{kl})$. It is convenient to assume that the action $G : V$ is given by $(g, h)\cdot x = (\transpose{g})^{-1} x h^{-1}, g\in\Sp_{2n}, h\in\GL_3$.

Introduce the following notation: $\omega_k$ and $s_{\alpha}$ refer to a fundamental weight in Bourbaki numbering and a reflection of the Weyl group of $\Sp_{2n}$ respectively (with $\alpha$ a positive root given in the notation of~\cite{OV}); $\omega'_k$ and $s'_{ij}$ refer to the highest weight of $\wedge^k\CC^n$ and a reflection (corresponding to the root $\eps_i-\eps_j$, cf. \emph{loc. cit.}) in the Weyl group of $\GL_n$, respectively. The weights $\lambda_i$ and the reflections $w_j$ for this example are indicated in the respective entries of Table~\ref{table: exmpl_Sp_GL3}. The expressions of the spherical roots in terms of $\lambda_i$ (which are specified in the entry ``Spherical roots'') enable us to determine whether some vector in the relative interior of a given face of $\mathcal{C}(V)$ lies in the valuation cone. For instance, $\mathcal{V}$ does not intersect the relative interior of $\cone(v_1,v_2)$ since, as one can see, for any positive linear combination of $v_1$ and $v_2$ there exists a spherical root taking positive value on it.

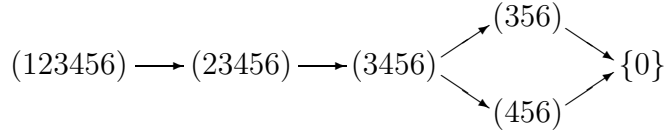
\begin{figure}
\centering
\unitlength 0.70ex
\linethickness{0.4pt}
\begin{picture}(48.00,13.00)(-24.00,-6.00)
\put(-34.00,0.00){\makebox(0,0)[cc]{$(123456)$}}
\put(-16.00,0.00){\makebox(0,0)[cc]{$(23456)$}}
\put(0.00,0.00){\makebox(0,0)[cc]{$(3456)$}}
\put(14.00,5.00){\makebox(0,0)[cc]{$(356)$}}
\put(14.00,-5.00){\makebox(0,0)[cc]{$(456)$}}
\put(26.00,0.00){\makebox(0,0)[cc]{$\{0\}$}}
\put(-27.00,0.00){\vector(1,0){5.00}}
\put(-10.00,0.00){\vector(1,0){5.00}}
\put(5.00,1.00){\vector(3,2){5.00}}
\put(5.00,-1.00){\vector(3,-2){5.00}}
\put(18.00,4.00){\vector(3,-2){5.00}}
\put(18.00,-4.00){\vector(3,2){5.00}}
\end{picture}  
\caption{An abstract orbit diagram for the case~\ref{tablecase: irr_Sp_GL3}, Table~\ref{table: reducible_modules}}
\label{fig: ex_Sp_GL3}
\end{figure}

The faces whose relative interiors intersect $\mathcal{V}$ are listed in the entry ``Colored faces'' (we use ($k_1,k_2\dots$) as a shortcut for $\cone(v_{k_1},v_{k_2},\dots)$). The reversed partial order on the set of colored faces is shown on Fig.~\ref{fig: ex_Sp_GL3}. It illustrates the claim of Lemma~\ref{lemma: abstract_orbits} for the current case. For the sake of completeness, we specify the orbits which correspond to the above listed colored faces. 

Denote bases in $\CC^3$ and $\CC^{2n}$ by $e_1,e_2,e_3$, and $a_1,\dots,a_n$, $b_n,\dots,b_1$, and assume that the symplectic form is given by the matrix
\begin{equation*}
\begin{pmatrix}
0   & J \\
-J  & 0 \\
\end{pmatrix},
\end{equation*}
where $J$ is an $(n\times n)$-matrix with $1$ on the secondary diagonal and $0$ elsewhere. This form establishes an equivariant isomorphism between $\CC^{2n}$ and $\CC^{2n*}$ and reads as $\Omega=\sum_{k=1}^n a_i\wedge b_i$ under this identification. Given $k=1,\dots,n$, put $\overline{k}=2n-k+1$. Then $x=\sum_{k,l}(x_{kl}a_k + x_{\overline{k}l}b_k)\otimes e_l$. A group $T$ (resp. $B$) in $G$ consists of the pairs of diagonal (resp. upper triangular) matrices. Let $\Delta_{l_1\dots l_r}^{k_1\dots k_r}$ be the determinant formed by the elements $x_{kl}$ with $k=k_1,\dots,k_r$ and $l=l_1,\dots,l_r$. Also put $\Omega'=(\Omega'_{ls})=\transpose{x}\Omega x$.

\begin{table}
\centering
\begin{tabular}{l l}
Basic weights: &
$\omega_1+\omega'_1$,
$\omega_2+\omega'_2$,
$\omega_3+\omega'_3$,
$\omega'_2$,
$\omega_1+\omega'_3$,
$\omega_2+\omega'_1+\omega'_3$;
\\
Basic semi-invariants: &
$\Delta_1^1$,
$\Delta_{12}^{12}$,
$\Delta_{123}^{123}$,
$\Omega'_{12}$,
$\sum_{k=2}^n \Delta_{123}^{1k\overline{k}}$,
$\Omega'_{12}\Delta_{13}^{12} - \Omega'_{13}\Delta_{12}^{12}$;
\\
Simple reflections of $W_V$: &
$s_{\eps_1-\eps_2}$,
$s_{\eps_2-\eps_3}$,
$s_{\eps_2+\eps_3}$,
$s'_{12}$,
$s'_{23}$;
\\
Spherical roots: &
\begin{tabular}[t]{l}
$\lambda_1+\lambda_5-\lambda_6$,
$-\lambda_1+\lambda_2-\lambda_3-\lambda_4+\lambda_6$,
$\lambda_3-\lambda_5$, \\
$\lambda_1-\lambda_2-\lambda_5+\lambda_6$,
$\lambda_2+\lambda_4-\lambda_6$;
\end{tabular}
\\
Colored faces: &
$(123456)$,
$(23456)$,
$(3456)$,
$(356)$,
$(456)$,
$\{0\}$;
\\
Orbits: &
$O_{00}$,
$O_{10}$,
$O_{20}$,
$O_{22}$,
$O_{30}$,
$O_{32}$.
\\
\end{tabular}

\caption{Summary for Example~\ref{exmpl: Sp_GL3}}
\label{table: exmpl_Sp_GL3}
\end{table}

Let us show that the functions from the second entry are indeed $B$-semi-invariant with desired weights. While it is obvious for the first four functions, $f_5$ and $f_6$ are still to be worked out. First, we make the following observation. It is known that, given two vector spaces $U$ and $W$, there is the following isomorphism of $\GL(U)\times\GL(W)$-modules
\begin{equation*}
\Sym^r(U\otimes W) \cong \bigoplus_\lambda \Sym^\lambda U \otimes \Sym^\lambda W.
\end{equation*}
Here, $\lambda$ runs over all the partitions of $r$, $\Sym^r$ stands for the $r$-th symmetric power, and $\Sym^\lambda$ denotes the Weyl module corresponding to $\lambda$. In particular, taking $\lambda=(1^r)$ we have $\Sym^\lambda=\wedge^r$, so $\wedge^r U^*\otimes\wedge^r W^*$ may be regarded as a submodule of $\Sym^r(U\times W)^*\subset\CC[U\otimes W]$. By this inclusion, the tensor product of two elements obtained as the wedge products of coordinate covectors of $U^*$ and $W^*$ indexed with $k_1,\dots,k_r$ and $l_1,\dots,l_r$ corresponds to the function $\Delta^{k_1,\dots,k_r}_{l_1,\dots,l_r}$.

Now, introduce $A=e^1\wedge e^2\wedge e^3$, where $e^i$ constitute the dual basis in $\CC^{3*}$, and consider the tensor $F_5 = (b_1\wedge\Omega)\otimes A$ in the space $\wedge^3\CC^{2n}\otimes\wedge^3\CC^{3*}$, which is $G$-isomorphic to $\wedge^3\CC^{2n*}\otimes\wedge^3\CC^{3*}$. Due to the fact that under the identification of the first tensor factors the vector $b_i$ corresponds to the coordinate function of the vector $a_i$ and vice versa, the value of the tensor $(b_{k_1}\wedge b_{k_2}\wedge b_{k_3})\otimes A$ at a matrix $x$ is equal to the determinant of the $(3\times 3)$-submatrix formed by the rows $k_1,k_2,k_3$ (here we assume $b_k=a_{\overline{k}}$ whenever $k>n$). Therefore, the function defined by the tensor $F_5$ is precisely $f_5$. The weight is easily calculated from the definition of $F_5$, because the form $\Omega$ is invariant, while $b_1$ and $A$ are of the weights $\omega_1$ and $\omega'_3$, correspondingly.

We now show that $f_6$ is given by the composite map $V\to\CC$ in the commutative diagram:

\begin{equation*}
\xymatrix{
& \wedge^2\CC^3 \ar[r]^{-\omega'_3 \cdot}_{\cong} & \CC^{3*} & & & \\
V \ar[ur] \ar[dr]_{-\omega_2 \cdot} \ar[rr]^{-(\omega_2+2\omega'_3)\cdot} & & \CC^{3*}\times\CC^{3*} \ar[u] \ar[d] \ar[r] & \wedge^2\CC^{3*} \ar[r]^{\omega'_3 \cdot}_{\cong} & \CC^3 \ar[r]^{-\omega'_1 \cdot} & \CC \\
& \wedge^3\CC^3 \ar[r]^{-\omega'_3 \cdot}_{\cong} & \CC^{3*} & & &
}
\end{equation*}
The maps are defined as follows. An element $x\in V$ gives rise to a linear map $\CC^{3*}\to\CC^{2n}$. In the presence of the symplectic form, denote its adjoint by $x^*:\CC^{2n}\to\CC^3$. Observe that the corresponding matrix is $-\transpose{x}\Omega$. The upper and lower diagonal arrows are given by
\begin{equation*}
x \mapsto \Omega'=x^*\Omega
\text{ and }
x \mapsto x^*b_1\wedge x^*b_2,
\end{equation*}
respectively, where $x^*$ is extended to $\wedge^2\CC^{2n}$ by putting $w_1\wedge w_2\mapsto x^* w_1\wedge x^* w_2$. The maps $\wedge^2\CC^3\to\CC^{3*}$ and $\wedge^2\CC^{3*}\to\CC^3$ are $\SL_3$-modules isomorphisms defined by the ``cross product'' formula; the vertical maps are just the projections (the top one is onto the first factor, and the bottom one is onto the second factor); the map $\CC^{3*}\times\CC^{3*}\to\wedge^2\CC^{3*}$ is given by wedge product. Finally, the function on the right is the first coordinate function with respect to the basis $e_i$. The symbols attached to arrows specify the weight by which one has to multiply the group action on the codomain in order to make the corresponding map $B$-equivariant. It is straightforward to check this in view of our convention that the actions on $\CC^{2n}$ and $\CC^3$ are defined by $g\cdot w = (\transpose{g})^{-1} w$ and $h\cdot u = (\transpose{h})^{-1} u$, $g\in\Sp_{2n}, h\in\GL_3$.

Thus, the diagram produces a semi-invariant function, and the weights along the horizontal arrows in the second row sum up to the opposite of the weight of this function. The obtained function actually equals $f_6$. This is easily seen by noting that the images of $\Omega'$ and $x^*b_1\wedge x^*b_2$ in $\CC^{3*}$ are $(\Omega'_{23},\Omega'_{31},\Omega'_{12})$ and $(\Delta_{23}^{12},\Delta_{31}^{12},\Delta_{12}^{12})$, the latter being the ``cross product'' of the first two rows of $x$ (by direct calculation). The first coordinate of the ``cross product'' of these images is exactly $\Omega'_{31}\Delta_{12}^{12}-\Omega'_{12}\Delta_{31}^{12}=f_6(x)$.

Finally, let us list the orbits. Clearly, the images of any two elements of the same orbit in $V$, regarded as linear mappings $\CC^{3*}\to\CC^{2n}$, are of the same dimension and the restrictions of the symplectic form to these images have the same rank. In other words, the following subsets are $G$-invariant:
\begin{equation*}
\begin{aligned}
O_{r0}=\{x\in V\mid \rk x = r, \Omega'(x)=0\},& \quad r=0,1,2,3; \\
O_{r2}=\{x\in V\mid \rk x = r, \Omega'(x)\neq 0\},& \quad r=2,3.
\end{aligned}
\end{equation*}
They are obviously disjoint and their number is equal to that of the vertices in the abstract orbit diagram. Thus, these subsets exhaust the list of orbits in $V$. We point out that the representation which we look at is a particular case of a family of representations of $\Sp_{2n}\times\GL_m$ which will be discussed in detail below (see~\S\,\ref{s: proof_for_irreducible}).

It is straightforward to check that the orbits in the last entry of Table~\ref{table: exmpl_Sp_GL3} agree with the cones in ``Colored faces''.

\end{example}

\subsection{Explicit orbits description}
\label{ss: orbits_description}

In the list of representations below, every item is associated to the corresponding row in Tables~\ref{table: irreducible_modules} and~\ref{table: reducible_modules}. The only exception is the item~(А\ref*{tablecase: irr_Sp_first}) corresponding to the range of rows 10--14 of Table~\ref{table: irreducible_modules}. In every case, we choose a suitable representation, which is equivalent to the one in the tables. Most of the orbital decompositions are clear or well known to experts. The following lemma deals with with the item~(A10) summarizing what will be proved in~\S\,\ref{s: proof_for_irreducible}.

\begin{lemma}
\label{lemma: prepare_Sp_GL}
The orbits of the action $\Sp_{2n}\times\GL_m : \Hom(\CC^m,\CC^{2n})$ are exactly the sets $O_{rs} = \{x: \CC^m\to \CC^{2n}\mid\rk x = r,\,\rk \Omega\vert_{\Ima x} = s\}$, where $2r-s\leq 2n,s\leq r\leq m$ and $s$ is even. Furthermore, $O_{rs}\subset \overline{O_{r's'}}$ if and only if $r\leq r'$, $s\leq s'$.
\end{lemma}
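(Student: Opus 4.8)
The plan is to proceed in three stages: (i) verify that the listed sets $O_{rs}$ are $G$-stable and mutually disjoint, (ii) show every orbit is one of them by exhibiting a normal form for each $x\in\Hom(\CC^m,\CC^{2n})$, and (iii) establish the closure order. For (i), the rank $r=\rk x$ and the corank $s=\rk(\Omega|_{\Ima x})$ of the restricted symplectic form are clearly invariant under the $\Sp_{2n}\times\GL_m$-action, since $\GL_m$ only reparametrizes the source and $\Sp_{2n}$ acts by symplectic transformations on the target (so it preserves $\Omega$ and moves $\Ima x$ symplectically). The constraints are the standard ones: $r\le m$ (rank bounded by source dimension), $s\le r$ (the form lives on an $r$-dimensional space), $s$ even (a symplectic/alternating form has even rank), and $2r-s\le 2n$ — the last because $\Ima x$ contains an isotropic subspace of dimension $r-s$ (a complement to the radical inside a Lagrangian of the nondegenerate part would be too big; more precisely, a maximal isotropic subspace of $\Ima x$ has dimension $r - s/2$), and... let me just note that $\Ima x + (\Ima x)^{\perp_\Omega}$ must fit in $\CC^{2n}$: since $\dim(\Ima x)^{\perp} = 2n - r$ and $\dim(\Ima x \cap (\Ima x)^\perp) = r - s$, we get $\dim(\Ima x + (\Ima x)^\perp) = 2n - (r-s) \le 2n$, which is automatic; the genuine bound $2r - s \le 2n$ comes from requiring that the isotropic subspace $\Ima x \cap (\Ima x)^\perp$ of dimension $r-s$ together with the rank-$s$ nondegenerate part embeds, i.e. $(r-s) + s \le n$ is too strong — the correct count is that a subspace carrying a form of rank $s$ and radical of dimension $r-s$ embeds in $\CC^{2n}$ iff $(r-s) + s \le 2n$ ... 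I will state the inequality as in the lemma and check it directly from the normal form below.

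For (ii), the key step: given $x$, choose a complement to $\ker x$ in $\CC^m$, so $x$ restricts to an isomorphism onto $W := \Ima x$, an $r$-dimensional subspace of $\CC^{2n}$ on which $\Omega$ has rank $s$. Using Witt's theorem / the classification of subspaces of a symplectic space up to the symplectic group, any two $r$-dimensional subspaces on which $\Omega$ has the same rank $s$ are $\Sp_{2n}$-conjugate. Concretely, pick a symplectic basis $a_1,\dots,a_n,b_n,\dots,b_1$ of $\CC^{2n}$ so that $W = \langle a_1,\dots,a_{s/2}, b_{s/2},\dots,b_1\rangle \oplus \langle a_{s/2+1},\dots,a_{r - s/2}\rangle$ (the first block nondegenerate of rank $s$, the second block isotropic of dimension $r-s$); this requires $(r - s/2) \le n$, equivalently $2r - s \le 2n$, which is exactly the stated constraint — so the constraint is both necessary and sufficient for such a subspace to exist. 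Then act by $\GL_m$ to send the chosen complement of $\ker x$ to the first $r$ standard basis vectors and $x$ to the inclusion of that complement followed by a fixed iso onto $W$; this produces a single normal form in each $O_{rs}$, proving each $O_{rs}$ is one orbit. Disjointness and the fact that the number of such pairs $(r,s)$ matches the number of vertices of the abstract orbit diagram (Lemma~\ref{lemma: abstract_orbits}) then forces these to be all the orbits.

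For (iii), the closure order: the inclusion $O_{rs}\subset\overline{O_{r's'}}$ for $r\le r'$, $s\le s'$ is shown by an explicit degeneration — starting from the normal form of $O_{r's'}$, scale to zero the appropriate coordinates to drop the rank from $r'$ to $r$ and simultaneously kill $r' - r$ isotropic (or, with care, $(s'-s)/2$ symplectically-paired) basis directions to lower $s'$ to $s$; since rank and form-rank are lower semicontinuous (they can only drop under limits), the converse inclusion $\overline{O_{r's'}} \subseteq \bigcup_{r\le r', s\le s'} O_{rs}$ is automatic, giving the "if and only if". The main obstacle I anticipate is bookkeeping in (ii) and (iii): making sure the degenerations respect the parity of $s$ (one must degenerate hyperbolic planes two dimensions at a time when lowering $s$, but single isotropic directions when lowering $r$ at fixed $s$) and confirming that every admissible $(r,s)$ is actually reached — i.e., that the constraints $s\le r\le m$, $s$ even, $2r-s\le 2n$ are exactly the realizability conditions, which is precisely the dimension count $r - s/2 \le n$ above. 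The sphericity input (Lemma~\ref{lemma: abstract_orbits}) is what lets us conclude we have found \emph{all} orbits once we have the right count; without it one would need to argue directly that the $O_{rs}$ are locally closed and cover $\Hom(\CC^m,\CC^{2n})$, which is also not hard but is cleaner to sidestep.
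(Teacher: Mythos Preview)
Your approach is essentially the paper's own (Lemmas~\ref{lemma: orbits_Sp_GL} and~\ref{lemma: partial_order} in \S\ref{s: proof_for_irreducible}): Witt's theorem plus the $\GL_m$-action for transitivity on each $O_{rs}$, semicontinuity of rank and form-rank for one direction of the closure order, and explicit degenerations for the other. Two remarks are in order.

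First, your appeal to Lemma~\ref{lemma: abstract_orbits} (matching the number of $O_{rs}$ against the vertices of the abstract orbit diagram) is both unnecessary and, as stated, not valid: the action $\Sp_{2n}\times\GL_m$ on $\Hom(\CC^m,\CC^{2n})$ is \emph{not} spherical for general $m$ and $n$, so outside the specific parameter ranges in Table~\ref{table: irreducible_modules} there is no abstract diagram to compare against. Fortunately this is moot --- once you have shown each $O_{rs}$ is a single orbit and observed that the $O_{rs}$ manifestly partition $\Hom(\CC^m,\CC^{2n})$ (every $x$ has a well-defined rank and form-rank, automatically satisfying the constraints), you are done. The paper's Lemma~\ref{lemma: orbits_Sp_GL} proceeds exactly this way, with no counting.

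Second, your derivation of the bound $2r-s\le 2n$ wanders. The paper's argument is one line: the radical $L=\ker(\Omega|_{\Ima x})$ is isotropic of dimension $r-s$ and lies in $(\Ima x)^\perp$, which has dimension $2n-r$; hence $r-s\le 2n-r$. For part~(iii), the paper writes down two explicit one-parameter families $x_1(\tau)\in O_{r+1,s}$ and $x_2(\tau)\in O_{r,s+2}$ converging to a fixed $x\in O_{rs}$; your ``scale coordinates to zero from the normal form of $O_{r's'}$'' is the same idea run in the opposite direction and is fine once the bookkeeping you mention (degenerating hyperbolic pairs versus single isotropic directions) is written out.
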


The sets listed in the entry ``Orbits'' in the remaining cases are easily shown to be disjoint and invariant under the group action. The fact that each of them is acted on transitively and that together they cover the representation space is established by matching the number of these sets with that of the vertices in the corresponding abstract orbit diagram (obtained in~\S\,\ref{ss: orbits_spherical_theory}).

\begin{remark}
\label{rm: sphericity_for_submodules}
In every row of Table~\ref{table: reducible_modules}, containing a representation of the form $V=V_1\oplus V_2$, each $V_i$ is an irreducible spherical (according to Lemma~\ref{lemma: sphericity_for_submodules}) representation of $G$. To put the representation $G : V_i$ in the form of Table~\ref{table: irreducible_modules} it suffices to: (i) remove the factors of $G$ acting on $V_i$ trivially; (ii) take the dual representation instead of the original one; (iii) add or remove a one-dimensional torus.
\end{remark}

Our notation for the orbits below follows the \hyperref[expl: tables]{explanation} on p.~\pageref{expl: tables}. In the subsection ``Reducible representations'', the product $O\times O'$ of two orbits is denoted by $X$ equipped with the corresponding subscripts. An expression of the form $O^{(...)}$(N) refers to an orbit of the case (AN) depending on the parameters $(...)$. For example, when parsing the expression $O_2^{(3,n)}(1)$, one takes the orbit $O_2$ of the case (A1) matching $\GL_q\times\GL_p$ with $q=3$, $p=n$. Except for the item (A1), where, for the sake of uniformity, we assume $p$ and $q$ to be arbitrary, all the parameters are bounded by the respective inequalities from the tables. Indices at the orbit symbols run through the admissible values in the last column of the respective row in the table. The orbits with negative subscripts are regarded as empty sets.

The elements of the nonzero orbit of minimal dimension in spinor representation are called \textit{pure spinors}.

We specify dimensions as well. They can be derived from: e.g.,~\cite{KaO} for (A1)--(A9); the proof of Theorem~\ref{thm: Sp_GL} for (A10); the items (A) for orbits $O$ and $Y$ in (B) (given $Y\subset O\times O'$, one obtains $\dim Y=\dim O + \dim O'$); the proof from \S\,\ref{s: proof_for_reducible} for orbits $Z$ in (B1)--(B4); elementary considerations for orbits $Z$ in the remaining (B) cases (the case (B10) is considered separately). 

\bigbreak
\textbf{Irreducible representations}
{
\footnotesize

\begin{longtable}[l]{c p{0.9\textwidth}}

(A\ref{tablecase: irr_GL_GL})
&
$\GL_q\times\GL_p : \Hom(\CC^p,\CC^q)$

\quad Orbits: $O_i=\{x\mid \rk x=i\}$, $i=0,\dots,\min(p,q)$.

\quad$\dim O_i=i(p+q)-i^2$. \\

(A\ref{tablecase: irr_GL_Sym})
&
$\GL_n: \Sym^2 \CC^n$

\quad Orbits: $O_i=\{x\mid \rk x =i\}$, $i=0,\dots,n$.

\quad$\dim O_i=in-i(i-1)/2$. \\

(A\ref{tablecase: irr_GL_Ext})
&
$\GL_n: \wedge^2 \CC^n$

\quad Orbits: $O_i=\{x\mid \rk x =2i\}$, $i=0,\dots,\lfloor n/2\rfloor$.

\quad$\dim O_i=i(2n-2i-1)$. \\

(A\ref{tablecase: irr_E6})
&
$\E_6\times\CC^\times: \CC^{27}$

\quad Orbits:
$O_0 = \{0\}$,
the highest weight vector orbit $O_1$,
the cubic hypersurface $O_2$,
the dense orbit $O_3$.

\quad$\dim O_i=0,17,26,27$ with $i=0,1,2,3$ respectively. \\

(A\ref{tablecase: irr_SO})
&
$\SO_n\times\CC^\times: \CC^n$

\quad Orbits:
$O_0=\{0\}$,
$O_1=$ \{non-zero isotropic vectors\},
the dense orbit $O_2$. \\

& \quad$\dim O_i=0,n-1,n$ with $i=0,1,2$ respectively. \\

(A\ref{tablecase: irr_G2})
&
$\Gtwo\times\CC^\times: \CC^7$. The group $\Gtwo$ acts on $\CC^7$ preserving a non-degenerate quadratic form. The representation is obtained by restriction from $\SO_7$. The orbits are the same as in~(A\ref*{tablecase: irr_SO}) with $n=7$. \\

(A\ref{tablecase: irr_spin7})
&
$\Spin_7\times\CC^\times: \CC^8$

\quad Orbits:
$O_0=\{0\}$,
$O_1=$ \{pure spinors\},
the dense orbit $O_2$. \\

& \quad $\dim O_i=0,7,8$ with $i=0,1,2$ respectively. \\

(A\ref{tablecase: irr_spin10})
&
$\Spin_{10}\times\CC^\times: \CC^{16}$

\quad Orbits:
$O_0=\{0\}$,
$O_1=$ \{pure spinors\},
the dense orbit $O_2$.

\quad$\dim O_i=0,11,16$ with $i=0,1,2$ respectively. \\

(A\ref{tablecase: irr_spin9})
&
$\Spin_9\times\CC^\times: \CC^{16}$

\quad This representation is the restriction of the representation~(A\ref*{tablecase: irr_spin10}). We have $O_1\text{(A\ref*{tablecase: irr_spin10})}=O_1$, $O_2\text{(A\ref*{tablecase: irr_spin10})}=O_2\cup O_3$. Let $\dotprod{\cdot}{\cdot}$ denote a bilinear form identifying the $\Spin_9$-modules $\CC^{16}$ and $(\CC^{16})^*$.

\quad Orbits:
$O_0=\{0\}$,
$O_1=$ \{pure spinors\},
$O_2=\{x\mid \dotprod{x}{x}=0\}\backslash\overline{O_1}$ (\textit{isotropic spinors}),
the dense orbit $O_3$.

\quad$\dim O_i=0,11,15,16$ with $i=0,1,2,3$ respectively. \\

(A\ref{tablecase: irr_Sp_first})
&
$\Sp_{2n}\times\GL_m : \Hom(\CC^m,\CC^{2n})$

\quad Orbits:
$O_{rs} = \{x: \CC^m\to \CC^{2n}\mid\rk x = r,\,\rk \Omega\vert_{\Ima x} = s\}$, $2r-s\leq 2n,s\leq r\leq m$, $s$ is even.

\quad $\dim O_{rs}=r(2n+m-r) - (r-s)(r-s-1)/2$.
\end{longtable}
\addtocounter{table}{-1}
}

\textbf{Reducible representations}
\label{part: reducible}

{
\footnotesize

\begin{longtable}[l]{c p{0.9\textwidth}}

(B\ref{tablecase: red_GLxC})
&
$\GL_n\times\CC^\times: \wedge^2\CC^n\oplus\CC^n$

\;
\begin{tabular}{l p{15em} p{11em} l}
Orbits:
& $O_i=O_i^{(n)}$(\ref*{tablecase: irr_GL_Ext});
& $O'_1=\CC^n\backslash\{0\}$; & \\
& $Z_i= X_{i,1}\cap\{(x,y)\mid y\perp\ker x\}$;
& $Z_{n/2}=\varnothing$, for $n$ even;
& $Y_i=X_{i,1}\backslash Z_i$. \\
\end{tabular}

\quad $\dim Z_i=i(2n-2i+1)$. \\

(B\ref{tablecase: red_GLxC*})
&
$\GL_n\times\CC^\times: \wedge^2\CC^n\oplus\CC^{n*}$

\;
\begin{tabular}{l p{15em} p{11em} l}
Orbits:
& $O_i=O_i^{(n)}$(\ref*{tablecase: irr_GL_Ext});
& $O'_1=\CC^{n*}\backslash\{0\}$; & \\
& $Z_i=X_{i,1}\cap\{(x,y)\mid y\in\ker x\}$;
& $Z_{n/2}=\varnothing$, for $n$ even;
& $Y_i=X_{i,1}\backslash Z_i$. \\
\end{tabular}

\quad $\dim Z_i=n+i(2n-2i-3)$. \\

(B\ref{tablecase: red_GLxGL})
&
$\GL_q\times\GL_p: \Hom(\CC^p,\CC^q)\oplus\CC^{p*}$

\;
\begin{tabular}{l p{15em} p{11em} l }
Orbits:
& $O_i=O_i^{(q,p)}$(\ref*{tablecase: irr_GL_GL});
& $O'_1=\CC^{p*}\backslash\{0\}$; & \\
& $Z_i= X_{i,1}\cap\{(x,y)\mid y\perp\ker x\}$;
& $Z_p=\varnothing$, if $p\leq q$;
& $Y_i=X_{i,1}\backslash Z_i$. \\
\end{tabular}

\quad $\dim Z_i=i(p+q-i+1)$. \\

(B\ref{tablecase: red_GLxGL*})
&
$\GL_q\times\GL_p: \Hom(\CC^p,\CC^q)\oplus\CC^p$

\;
\begin{tabular}{l p{15em} p{11em} l}
Orbits:
& $O_i=O_i^{(q,p)}$(\ref*{tablecase: irr_GL_GL});
& $O'_1=\CC^p\backslash\{0\}$; & \\
& $Z_i=X_{i,1}\cap\{(x,y)\mid y\in\ker x\}$;
& $Z_p=\varnothing$, if $p\leq q$;
& $Y_i=X_{i,1}\backslash Z_i$. \\
\end{tabular}

\quad $\dim Z_i=p+i(p+q-i-1)$. \\

(B\ref{tablecase: red_SpxCxC})
&
$\CC^\times\times\Sp_{2n}\times\CC^\times: \CC^{2n}\oplus\CC^{2n}$

\;
\begin{tabular}{l p{17.5em} l}
Orbits:
& $O_1, O'_1=O_1^{(2n,1)}$(\ref*{tablecase: irr_Sp_first});
& $Z_\sim=\{(x_1,x_2)\mid x_1,x_2\neq 0, x_1\sim x_2\}$; \\
& $Z^\circ=\{(x_1,x_2)\mid \symplprod{x_1}{x_2}=0\}\backslash\overline{Z_\sim}$;
& $Y=X_{1,1}\backslash(Z^\circ\cup Z_\sim)$.
\end{tabular}

Here $\sim$ denotes proportionality and $\symplprod{\cdot}{\cdot}$ denotes a fixed symplectic product on $\CC^{2n}$.

\quad $\dim Z_\sim=2n+1$; $\dim Z=4n-1$. \\

(B\ref{tablecase: red_SpxGL})
&
$(\Sp_{2n}\times\CC^\times)\times\GL_2: \Hom(\CC^2,\CC^{2n})\oplus\CC^{2}$

\;
\begin{tabular}{l p{17.5em} p{8em} l}
Orbits:
& $O_{rs}=O_{rs}^{(2n,2)}$(\ref*{tablecase: irr_Sp_first});
& $O'_1=\CC^2\backslash\{0\}$; & \\
& $Z_{10}=X_{10,1}\cap\{(x,y)\mid y\in \ker x\}$;
& $Y_{2j}=X_{2j,1}$;
& $Y_{10}=X_{1,0}\backslash Z_{10}$
\end{tabular}

\quad $\dim Z_{10}=2n+2$. \\

(B\ref{tablecase: red_GLxSLxGL})
&
$\GL_n\times\SL_2\times\GL_m:
\Hom(\CC^2,\CC^n)\oplus\Hom(\CC^m,\CC^2)$

\;
\begin{tabular}{l p{17.5em} l}
Orbits:
& $O_i=O_i^{(n,2)}$(\ref*{tablecase: irr_GL_GL});
& $O'_j=O_j^{(2,m)}$(\ref*{tablecase: irr_GL_GL}); \\
& $Z_{11}=X_{1,1}\cap\{(x,y)\mid \Ima y\subset\ker x\}$;
& $Y_{ij}=X_{i,j}$,
$(i,j)\neq(1,1)$; \\
& $Y_{11}=X_{1,1}\backslash Z_{11}$.
\end{tabular}

\quad $\dim Z_{11} = m + n + 1$. \\

(B\ref{tablecase: red_SpxSLxGL})
&
$(\Sp_{2n}\times\CC^\times)\times\SL_2\times\GL_m:
\Hom(\CC^2,\CC^{2n})\oplus\Hom(\CC^m,\CC^2)$

\;
\begin{tabular}{l p{17.5em} l}
Orbits:
& $O_{rs}=O_{rs}^{(2n,2)}$(\ref*{tablecase: irr_Sp_first});
& $O'_i=O_i^{(2,m)}$(\ref*{tablecase: irr_GL_GL}); \\
& $Z_{10,1}=X_{10,1}\cap\{(x,y)\mid \Ima y\subset\ker x\}$;
& $Y_{rs,i}=X_{rs,i}$,
$(r,s,i)\neq(1,0,1)$; \\
& $Y_{10,1}=X_{10,1}\backslash Z_{10,1}$.
\end{tabular}

\quad $\dim Z_{10,1} = 2n + m + 1$. \\

(B\ref{tablecase: red_SpxSLxSp})
&
$(\Sp_{2n}\times\CC^\times)\times\GL_2\times(\Sp_{2m}\times\CC^\times):
\Hom(\CC^2,\CC^{2n})\oplus\Hom(\CC^{2m},\CC^2)$

\;
\begin{tabular}{l p{17.5em} l}
Orbits:
& $O_{rs}=O_{rs}^{(2n,2)}$(\ref*{tablecase: irr_Sp_first});
& $O'_{r's'}=O_{r's'}^{(2,2m)}$(\ref*{tablecase: irr_Sp_first}); \\
& $Z_{10,10}=X_{10,10}\cap\{(x,y)\mid \Ima y\subset\ker x\}$;
& $Y_{rs,r's'}=X_{rs,r's'}$,
$(r,s),(r',s')\neq(1,0)$; \\
& $Y_{10,10}=X_{10,10}\backslash Z_{10,10}$.
\end{tabular}

\quad $\dim Z_{10,10}= 2(n+m) + 1$. \\

(B\ref{tablecase: red_spin8})
&
$\CC^\times\times\Spin_8\times\CC^\times: \CC^8_+\oplus\CC^8_-$

\;
\begin{tabular}{l l}
Orbits:
& $O_i,O'_i=O_i$(\ref*{tablecase: irr_spin7}); \\
& $Z_{11}$ = \{the orbit of a pair $(v_+,v_-)$ of highest-weight vectors in $\CC^8_{\pm}$\}; \\
& $Y_{ij}=X_{i,j}$,
$(i,j)\neq(1,1)$;
$Y_{11}=X_{1,1}\backslash Z_{11}$.
\end{tabular}

\quad $\dim Z_{11} = 11$.

\quad This representation is equivalent to $\CC^8\oplus\CC^8_+$. Here, the orbit $Z_{11}$ of a pair of highest weight vectors $(v,v_+)$ is a fiber bundle over the 7-dimensional orbit $O'_1$. Denote by $e_i$ a vector of weight $\eps_i$ (in the notation of~\cite{OV}) in $\CC^8$, $\pm i=1,2,3,4$. Then the isotropy group of the spinor $v_+$ acts on the space $\CC^4$ spanned by $e_1,\dots,e_4$ via the vector representation of its subgroup $\GL_4$. So the orbit $\CC^4\backslash\{0\}$ of $v=e_1$ under the highest weight spinor isotropy group is 4-dimensional. It coincides with the fiber over $v_+$, hence $\dim Z_{11}=11$. \\

\end{longtable}
\addtocounter{table}{-1}
}

\subsection{}
\label{ss: orbits_convergence}

In this subsection, we turn to a problem of assigning specific orbits to vertices of an abstract orbit diagram. Let $G : V$ be one of the representations in the tables. By $\S\,\ref{ss: orbits_spherical_theory}$, its abstract orbit diagram $\Gamma$ can be found in the tables as well. The set of orbits $V/G$ is described in $\S\,\ref{ss: orbits_description}$, its cardinality matches the number of vertices in $\Gamma$. Clearly, the maximal vertex $v_{max}$ corresponds to the open orbit, whereas the minimal vertex $v_0$ corresponds to the zero orbit. We describe what is to be done to set up the correspondence between the remaining vertices and orbits. Here we call a graph $\Gamma$ \textit{flexible} if it admits a non-trivial automorphism, i.e. a monotonic bijection of its vertices. There are four possibilities:

1) $V$ is \textit{irreducible} and

1a) $\Gamma$ is \textit{not flexible} (all rows of Table~\ref{table: irreducible_modules}, except for the 12th row). Case-by-case checking shows that the vertices of $\Gamma$ form a linearly ordered set. The explicit description in~\S\,\ref{ss: orbits_description} together with Theorem~\ref{thm: Pan} (for abelian actions, rows 1--4; see~\S\,\ref{s: proof_for_irreducible}) and Lemma~\ref{lemma: prepare_Sp_GL} (for the groups $\Sp\times\GL$) show that the order on $V/G$ is also linear. Therefore, the orbits are in unambiguous bijection with the vertices (in accordance with the growth of dimensions).

1b) $\Gamma$ is \textit{flexible}. The only relevant representation is $\Sp_{2n}\times\GL_3 : \CC^{2n}\otimes\CC^3$. In this case, the complete orbit diagram may be obtained by Theorem~\ref{thm: Sp_GL} without use of the above theory. Still, we give an overview consistent with the theory of spherical representations in Example~\ref{exmpl: Sp_GL3}.

2) $V=V_1\oplus V_2$ is \textit{reducible}. Among others, there are orbits in $V$ that lie in one of the two submodules $V_i$. In the sequel, these orbits belonging to $V_i/G$ are called \textit{pure}. The closure of a pure orbit contains only pure orbits of the same submodule. The closure of any other orbit contains at least two orbits from different submodules: this is because each orbit is \textit{biconical} (i.e. invariant under the component-wise multiplication by non-zero constants), which, in turn, is due to the saturatedness. Explicitly, if an orbit $Y\subset O\times O'$ is not pure, then $O,O'\subset\overline{Y}$. By Remark~\ref{rm: sphericity_for_submodules} and the discussion in the irreducible case, we may assume that the structure of $V_i/G$ is already known. By checking all direct summands in Table~\ref{table: reducible_modules} we see that in each case $V_i/G$ is a linearly ordered with the specified number of elements.

Now we locate the minimal vertex $v_0$ of $\Gamma$. It directly precedes two vertices $a_1$ and $b_1$, which correspond to the minimal nonzero pure orbits $O_1$ and $O'_1$ (however, the precise correspondence  is yet to be established). Consider the vertices directly following $a_1$. If, among them, there exists one with $a_1$ being the only directly preceding vertex, then it becomes our new pure vertex $a_2$. Since the order on the set of pure orbits of a fixed submodule is linear, the new one is uniquely defined. If there is no such vertex, then $a_1$ corresponds to an open orbit in one of the submodules. Proceeding in this manner, and then carrying out the same with $b_1$, we construct two linear subgraphs $\Gamma_a$ and $\Gamma_b$ starting at $v_0$ and formed by the strings of pure orbits $v_0,a_1,\dots,a_{max}$ and $v_0,b_1,\dots,b_{max}$ lying in $V_1$ and $V_2$, respectively. Suppose that:

2a) $\Gamma$ is \textit{not flexible} (all rows of~\ref{table: reducible_modules}, except for 5,7,9 and 10). In these cases, $\Gamma_a$ and $\Gamma_b$ have distinct lengths. With what we already know about the irreducible case, i.e. the number of orbits in the submodules, we may unambiguously identify these linear subgraphs with the orbit diagrams of $V_1$ and $V_2$. Assume, without loss of generality, that $\Gamma_a$ corresponds to $V_1$ and $\Gamma_b$ corresponds to $V_2$.

Let us define a partial order on the pairs of orbits $(O,O')$ given by the component-wise comparison. Then we go through the pairs starting with the maximal one, say $(O_{max},O'_{max})$. Orbits in the product $O_{max}\times O'_{max}$ correspond to the vertices $v$ subject to the inequalities $v\geq a_{max}$ and $v\geq b_{max}$. For the next pair $(O,O')$, find all the vertices of $\Gamma$ greater than those corresponding to $O$ and $O'$. After removing the vertices considered at the previous step, we are left with a set of vertices corresponding to the orbits in $O\times O'$. Proceeding like this, we uniquely determine the sets of vertices corresponding to the orbits in all products $O\times O'$. In all cases, the product $O\times O'$ is either a single orbit $Y$ or a union of two orbits $Z\cup Y$, hence the sets of vertices contain either one or two elements. Clearly, the process terminates at a uniquely determined correspondence between vertices and orbits.

2b) $\Gamma$ is \textit{flexible} (rows 5,7,9 and 10). There are two differences from the previous case: $\Gamma_a$ and $\Gamma_b$ are of the same length, and the product $O_1\times O'_1$ in the case 5 splits into three orbits (rather than one or two) somehow corresponding to the three vertices in the related set. These differences cause no trouble. Due to the natural symmetry between $V_1$ and $V_2$, it does not matter whether $V_1/G$ corresponds to $\Gamma_a$ and $V_2/G$ to $\Gamma_b$ or vice versa. In the case 5, observing that the order on the set of the three vertices is again linear still enables us to relate the vertices to the orbits in a unique way.

\begin{example}
Consider the representation~\ref{tablecase: red_SpxSLxGL} of Table~\ref{table: reducible_modules}. The action of $G$ on the submodules $V_1 = \CC^{2n}\otimes\CC^2$ and $V_2=\CC^2\otimes\CC^m$ is equivalent to the representation of the groups $\Sp_{2n}\times\GL_2$ and $\GL_2\times\GL_m$ from rows~\ref{tablecase: irr_Sp_GL2} and~\ref{tablecase: irr_GL_GL}, accordingly. The abstract orbit diagram, obtained in~\ref{ss: orbits_spherical_theory}, is shown in Figure~\ref{fig: example} (left).

We assume that the irreducible cases are already examined. Thus, we know that there are four and three linearly ordered orbits in $V_1$ and $V_2$, respectively. Starting with $v_0$, we look for all the pure vertices, i.e. the ones directly preceded by only one vertex. These vertices, forming the subgraphs $\Gamma_a$ and $\Gamma_b$, are shown in Figure~\ref{fig: example} (right) labeled by $a_i$ and $b_j$. With the notations of Table~\ref{table: irreducible_modules}, we have the following: the vertices $a_1,a_2$ and $a_3$ correspond to the orbits $O_{10},O_{20}$ and $O_{22}$ in $V_1$; the vertices $b_1$ and $b_2$ correspond to the orbits $O_1$ and $O_2$ in $V_2$.

Now we go through the pairs of vertices $(a_3,b_2), (a_2,b_2), (a_1,b_2), (a_3,b_1), (a_2,b_1), (a_1,b_1)$. Evidently, a vertex $v\geq a_3, b_2$ is unique and is equal to $v_{max}$. This means that the product $O_{22}\times O'_2$ does not split into a union of $G$-orbits, and that it coincides with $Y_{22,2}$ (see~\S\,\ref{ss: orbits_description}). The vertex set related to the pair $(a_2,b_2)$ consists of two vertices. One of them $v_{max}$ has already been considered, the other corresponds to the product $O_{20}\times O'_2 = Y_{20,2}$. The same holds up to the pair $(a_1,b_1)$, for which the related set (filled dots in Figure~\ref{fig: example} (right)) consists of two vertices. They are in an unambiguous bijection with the orbits $Z_{10,1}\leq Y_{10,1}$.

\renewcommand{\tabcolsep}{18pt}
\begin{figure}
\centering
\begin{tabular}{cc}
{
\footnotesize
\unitlength 0.70ex
\linethickness{0.4pt}
\begin{picture}(49.00,24.00)(-17.00,-11.00)
\put(-15.00,10.00){\makebox(0,0)[cc]{$\bullet$}}
\put(0.00,10.00){\makebox(0,0)[cc]{$\bullet$}}
\put(15.00,10.00){\makebox(0,0)[cc]{$\bullet$}}
\put(30.00,10.00){\makebox(0,0)[cc]{$\bullet$}}

\put(-15.00,0.00){\makebox(0,0)[cc]{$\bullet$}}
\put(-15.00,-10.00){\makebox(0,0)[cc]{$\bullet$}}

\put(-6.00,3.00){\makebox(0,0)[cc]{$\bullet\;\;\;$}}
\put(6.00,-3.00){\makebox(0,0)[cc]{$\bullet\;\;\;$}}

\put(15.00,0.00){\makebox(0,0)[cc]{$\bullet$}}
\put(30.00,0.00){\makebox(0,0)[cc]{$\bullet$}}

\put(0.00,-10.00){\makebox(0,0)[cc]{$\bullet\;\;\;$}}
\put(15.00,-10.00){\makebox(0,0)[cc]{$\bullet$}}
\put(30.00,-10.00){\makebox(0,0)[cc]{$\bullet$}}

\put(-10.00,10.00){\vector(1,0){6.00}}
\put(4.00,10.00){\vector(1,0){6.00}}
\put(19.00,10.00){\vector(1,0){6.00}}

\put(-1.50,8.00){\vector(-1,-1){3.50}}
\put(-14.00,0.50){\vector(2,1){4.00}}
\put(-2.00,1.00){\vector(3,-2){4.00}}
\put(2.50,-4.50){\vector(-1,-1){3.50}}
\put(7.00,-2.00){\vector(3,1){4.50}}

\put(19.00,0.00){\vector(1,0){6.00}}

\put(-10.00,-10.00){\vector(1,0){6.00}}
\put(4.00,-10.00){\vector(1,0){6.00}}
\put(19.00,-10.00){\vector(1,0){6.00}}

\put(-15.00,7.50){\vector(0,-1){5.00}}
\put(15.00,7.50){\vector(0,-1){5.00}}
\put(30.00,7.50){\vector(0,-1){5.00}}

\put(-15.00,-2.50){\vector(0,-1){5.00}}
\put(15.00,-2.50){\vector(0,-1){5.00}}
\put(30.00,-2.50){\vector(0,-1){5.00}}
\end{picture}
}

&

{
\footnotesize
\unitlength 0.70ex
\linethickness{0.4pt}
\begin{picture}(49.00,24.00)(-17.00,-11.00)
\put(-15.00,10.00){\makebox(0,0)[cc]{$v_0$}}
\put(0.00,10.00){\makebox(0,0)[cc]{$a_1$}}
\put(15.00,10.00){\makebox(0,0)[cc]{$a_2$}}
\put(30.00,10.00){\makebox(0,0)[cc]{$a_3$}}

\put(-15.00,0.00){\makebox(0,0)[cc]{$b_1$}}
\put(-15.00,-10.00){\makebox(0,0)[cc]{$b_2$}}

\put(-6.00,3.00){\makebox(0,0)[cc]{$\bullet\;\;\;$}}
\put(6.00,-3.00){\makebox(0,0)[cc]{$\bullet\;\;\;$}}

\put(15.00,0.00){\makebox(0,0)[cc]{$\circ$}}
\put(30.00,0.00){\makebox(0,0)[cc]{$\circ$}}

\put(0.00,-10.00){\makebox(0,0)[cc]{$\circ\;\;\;$}}
\put(15.00,-10.00){\makebox(0,0)[cc]{$\circ$}}
\put(30.00,-10.00){\makebox(0,0)[cc]{$v_{max}$}}

\put(-10.00,10.00){\vector(1,0){6.00}}
\put(4.00,10.00){\vector(1,0){6.00}}
\put(19.00,10.00){\vector(1,0){6.00}}

\put(-1.50,8.00){\vector(-1,-1){3.50}}
\put(-14.00,0.50){\vector(2,1){4.00}}
\put(-2.00,1.00){\vector(3,-2){4.00}}
\put(2.50,-4.50){\vector(-1,-1){3.50}}
\put(7.00,-2.00){\vector(3,1){4.50}}

\put(19.00,0.00){\vector(1,0){6.00}}

\put(-10.00,-10.00){\vector(1,0){6.00}}
\put(4.00,-10.00){\vector(1,0){6.00}}
\put(19.00,-10.00){\vector(1,0){6.00}}

\put(-15.00,7.50){\vector(0,-1){5.00}}
\put(15.00,7.50){\vector(0,-1){5.00}}
\put(30.00,7.50){\vector(0,-1){5.00}}

\put(-15.00,-2.50){\vector(0,-1){5.00}}
\put(15.00,-2.50){\vector(0,-1){5.00}}
\put(30.00,-2.50){\vector(0,-1){5.00}}
\end{picture}
}
\end{tabular}
\caption{The algorithm applied to the representation~\ref{tablecase: red_SpxSLxGL}, Table~\ref{table: reducible_modules}}
\label{fig: example}
\end{figure}
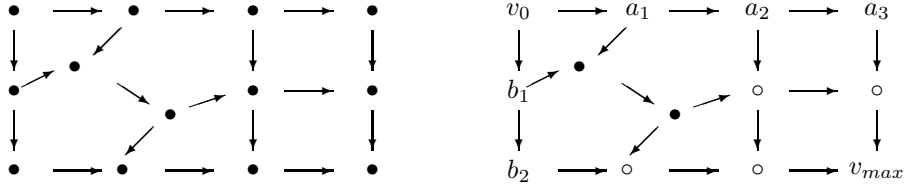
\renewcommand{\tabcolsep}{3pt}

\end{example}

\section{Proof of Theorem~\ref{thm: main_theorem}a}
\label{s: proof_for_irreducible}

Consider the irreducible representations of Table~\ref{table: irreducible_modules}.

The representations in rows 1--4 are related to abelian actions; the orbits and the Pyasetskii duality for abelian actions were studied in~\cite{Pan} (see Theorem~\ref{thm: Pan}; note that in this theorem,  as soon as $V$ and $V^*$ are identified, $O_i\subset V$ and $Q_i\subset V^*$ represent the same orbit). The case $\GL_q\times\GL_p : \CC^q\otimes\CC^p$ corresponds (up to saturation) to the representation of the subgroup $\{(g_1,g_2): \det(g_1)\det(g_2)=1\}$ provided by the short grading of $\mathfrak{sl}_{q+p}$. The representations $\GL_n : \Sym^2\CC^n,\wedge^2\CC^n$ and $\E_6\times\CC^\times : \CC^{27}$ are provided by the short gradings of the Lie algebras $\mathfrak{sp}_{2n}$, $\mathfrak{so}_{2n}$ and $\E_7$, correspondingly.

Each of the actions in rows 5--8 has three orbits, the closed and the open one being dual to each other, and the middle one being self-dual. Notice that, at the same time, the representations 5 and 8 arise as abelian actions. They correspond to the short gradings of the Lie algebras $\mathfrak{so}_{n+2}$ and $\E_6$.

The duality for the representation of the group $\Spin_9\times\CC^\times$ stems from the description of its orbits~(A\ref*{tablecase: irr_spin9}) and Lemma~\ref{lemma: duality_for_homomorphism} applied to the homomorphism $\Spin_9\times\CC^\times\to\Spin_{10}\times\CC^\times$.

\bigbreak
\noindent
\textbf{Representations 10--14.}

These cases admit a uniform description (in row 10 we assume $\CC^\times=\GL_1$), which we discuss now. The results are formulated in Theorem~\ref{thm: Sp_GL} and Corollary~\ref{cor: Sp_GL}.

Let $U$ and $W$ be vector spaces of dimensions $m$ and $2n$, and let the space $W$ be endowed with a symplectic form $\Omega=\symplprod{\cdot}{\cdot}$. It is convenient to take the representation space of the group $G=\Sp(W)\times\GL(U)$ in the form $V=\Hom(U,W)$ geometrically equivalent to $W\otimes U$. We will describe Pyasetskii duality for arbitrary $m$ and $n$. Here $V^*=\Hom(W, U)$, $\pairing{x}{y}=\tr(xy)$, $x\in V, y\in V^*$.
\begin{lemma}
\label{lemma: orbits_Sp_GL}
The sets
\begin{equation*}
O_{rs} = \{x: U\to W\mid\rk x = r,\,\rk \Omega\vert_{\Ima x} = s\}\subset V,
\end{equation*}
\begin{equation*}
Q_{kt}=\{y: W\to U\mid\dim\ker y=k,\,\dim\ker\Omega\vert_{\ker y}=t\}\subset V^*,
\end{equation*}
where $2r-s\leq 2n, s\leq r\leq m$, and $s$ is even, $t\leq k$, $t\leq 2n-k\leq m$, and $k-t$ is even, exhaust all the orbits in $V$ and $V^*$.
\end{lemma}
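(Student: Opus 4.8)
The plan is to reduce the orbit classification to the standard description of subspaces of a symplectic vector space up to the symplectic group. I argue for $V=\Hom(U,W)$; the dual space $V^*=\Hom(W,U)$ is then handled by the identical argument with the roles of kernel and image interchanged.

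First, $O_{rs}$ is $G$-stable because $r=\rk x$ and the isometry type of the restricted form $\Omega|_{\Ima x}$ — equivalently the pair $\bigl(\dim\Ima x,\ \dim(\Ima x\cap(\Ima x)^{\perp})\bigr)=(r,\,r-s)$ — are preserved by $\Sp(W)\times\GL(U)$. The admissibility conditions are automatic: $r\le m$ since $r=\rk x$ and $\dim U=m$; $s$ is even and $s\le r$ since $s$ is the rank of a skew form on an $r$-dimensional space; and $2r-s\le 2n$ since $r-s=\dim(\Ima x\cap(\Ima x)^{\perp})\le\dim(\Ima x)^{\perp}=2n-r$, the bound $r\le 2n$ following in turn.

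For transitivity on a nonempty $O_{rs}$, write $x\in O_{rs}$ as a composition $U\twoheadrightarrow U/\ker x\xrightarrow{\ \phi\ }\Ima x\hookrightarrow W$ with $\phi$ an isomorphism. Acting on the right by $\GL(U)$ we may move $\ker x$ to a fixed coordinate subspace $K_0\subset U$, and then, using the surjection from the stabilizer of $K_0$ in $\GL(U)$ onto $\GL(U/K_0)$, we may further normalize $\phi$ to any prescribed isomorphism $U/K_0\xrightarrow{\sim}\Ima x$. Hence the $G$-orbit of $x$ depends only on the $\Sp(W)$-orbit of the subspace $\Ima x\subseteq W$. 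By Witt's extension theorem for the symplectic form, two subspaces of $W$ lie in one $\Sp(W)$-orbit exactly when they have the same dimension and their radicals have the same dimension — here the numbers $r$ and $r-s$ — so $O_{rs}$ is a single orbit. Since every $x\in V$ evidently lies in such an $O_{rs}$, these exhaust $V/G$.

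For $V^*$, an element $y\colon W\to U$ factors as $W\twoheadrightarrow W/\ker y\xrightarrow{\sim}\Ima y\hookrightarrow U$; now the symplectic factor $\Sp(W)$ acts by precomposition on the subspace $\ker y\subseteq W$ (which carries the form), while $\GL(U)$ acts by postcomposition on $\Ima y\subseteq U$ and absorbs the intermediate isomorphism. Repeating the argument verbatim, the orbit of $y$ is governed by the $\Sp(W)$-orbit of $\ker y$, i.e. by $k=\dim\ker y$ and $t=\dim(\ker y\cap(\ker y)^{\perp})$, subject to $t\le k$, $k-t$ even, $t\le 2n-k$, and $2n-k=\rk y\le m$; this is the list $Q_{kt}$. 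I expect the only nonroutine ingredient to be the input from symplectic linear algebra — the classification of subspaces of $(W,\Omega)$ up to $\Sp(W)$ by dimension and radical dimension (Witt's theorem) — everything else being bookkeeping of the two-sided action, whose single subtlety is that the $\GL(U)$-factor is used twice: once to normalize the kernel (resp. image) on the $U$-side, and once more, after $\Sp(W)$ has been applied, to absorb the intermediate isomorphism.
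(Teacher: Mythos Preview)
Your argument is correct and follows essentially the same route as the paper: both reduce transitivity on $O_{rs}$ to Witt's theorem for $\Sp(W)$ acting on subspaces of $W$, and use $\GL(U)$ to absorb the remaining linear-algebra data (the kernel and the induced isomorphism $U/\ker x\cong\Ima x$). The paper orders the steps slightly differently—first applying Witt to match the images, then lifting the induced map $U/\ker x\to U/\ker x'$ to an element of $\GL(U)$—but the content is the same.
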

\begin{proof}
Put aside the restrictions for $r$ and $s$ for a moment. Clearly, the sets $O_{rs}$ are disjoint, invariant, and cover the whole $V$. At the same time, by Witt's theorem, the images of any two linear maps $x,x'\in O_{rs}$ are linked by a symplectic transformation $g$ of $W$, say, $g(\Ima x)=\Ima x'$. One can lift the corresponding map $U/{\ker x}\to U/{\ker x'}$ to a non-degenerate operator $h$ on $U$. Evidently, $x'=gxh^{-1}$, i.e. $G$ acts on $O_{rs}$ transitively.

The inequalities given above, in essence, indicate whether there exists a subspace $W_1$ in $W$ of dimension $r$ with $\rk\Omega\vert_{W_1}=s$ or not. Specifically, if such a space exists then the subspace $L=\ker\Omega\vert_{W_1}$ is isotropic (whence, lies in $W_1^\perp$), which implies that $\dim L=r-s\leq\dim W_1^\perp=2n-r$, so $2r-s\leq 2n$. Conversely, when the inequalities hold, such a subspace is easily found.

The proof for the space $V^*$ is quite similar.
\end{proof}

Denote by $\dotprod{\cdot}{\cdot}$ a non-degenerate symmetric form on $U$. To an element $x\in V$ we associate an operator $x^*\in V^*$ defined by $\symplprod{x(u)}{w}=\dotprod{u}{x^*(w)}$ for any $u\in U$, $w\in W$. The map $x\mapsto x^*$ sets up a geometric equivalence between $V$ and $V^*$. One quickly verifies that $\ker x^*=(\Ima x)^\perp=:W_0$, $\dim W_0=2n-r$, and $\ker\Omega\vert_{W_0}=W_0\cap (W_0)^\perp=\ker\Omega\vert_{\Ima x}=:L$, $\dim L = r - s$. So, under the geometric equivalence, the orbits $O_{rs}$ and $Q_{2n-r,r-s}$ (equally, $Q_{kt}$ and $O_{2n-k,2n-k-t}$) are the same.

Let us fix a non-empty orbit $O_{rs}$ and an element $x: U\to W$ in $O_{rs}$. The latter can be chosen so that the form $\dotprod{\cdot}{\cdot}$ is non-degenerate on $\ker x$. Since $\Ima x^*$ and $\ker x$ are orthogonal to each other, $U$ decomposes into their direct sum. The spaces $U$ and $W$ admit the following decompositions:
\begin{equation}\label{eq: WU_decompositions}
W = F\oplus L\oplus L'\oplus M = \Ima x\oplus L'\oplus M,~
U =\Ima x^*x\oplus x^* L' \oplus \ker x.
\end{equation}
Here, $L=\ker\Omega\vert_{\Ima x}=\Ima x\cap\ker x^*$, the spaces $F$ and $M$ are complementary to $L$ in $\Ima x$ and $\ker x^*$, respectively. The form $\Omega$ is non-degenerate on $F\oplus M$ since it is non-degenerate on each of the orthogonal direct summands. A simple calculation shows that $L$ is a \textit{Lagrangian} subspace of $(F\oplus M)^\perp$, i.e. an isotropic subspace whose dimension is half that of $(F\oplus M)^\perp$. There always exists a complementary Lagrangian subspace $L'$. Note that $\Omega$ establishes a duality between $L$ and $L'$. Further, $\Ima x^*=x^*F+x^*L'$. Moreover, this is a direct sum because some nontrivial combination of vectors of $F$ and $L'$ would lie in $\ker x^*$ otherwise. Observe that $x^*F=\Ima x^*x$.

\begin{lemma}
\label{lemma: partial_order}
The partial order on orbits in $V$ and $V^*$ is given by the rules:

1. $O_{rs}\subset\overline{O_{r's'}}$ if and only if $r\leq r'$, $s\leq s'$;

2. $Q_{k't'}\subset\overline{Q_{kt}}$ if and only if $k'\geq k$, $k'+t'\geq k+t$;
\end{lemma}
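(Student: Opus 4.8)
The plan is to read off both invariants $r$ and $s$ as ranks of matrices depending polynomially on $x$, obtain the ``only if'' directions for free from semicontinuity, and prove the ``if'' directions by listing the covering relations of the poset of admissible pairs and realising each by an explicit one-parameter degeneration. Throughout I identify $O_{rs}$ with $\{x\in\Hom(U,W)\mid\rk x=r,\ \rk(\transpose{x}\Omega x)=s\}$: the form $(u,u')\mapsto\Omega(xu,xu')$ on $U$ has matrix $\transpose{x}\Omega x$, and pulling a bilinear form back along the surjection $U\to\Ima x$ preserves its rank, so $\rk(\transpose{x}\Omega x)=\rk\Omega\vert_{\Ima x}=s$. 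Since the entries of $x$ and of $\transpose{x}\Omega x$ are respectively linear and quadratic in $x$, the maps $x\mapsto\rk x$ and $x\mapsto\rk(\transpose{x}\Omega x)$ are lower semicontinuous; hence $\overline{O_{r's'}}$ lies in the closed set cut out by $\rk x\le r'$ and $\rk(\transpose{x}\Omega x)\le s'$, and intersecting with $O_{rs}$ gives $r\le r'$, $s\le s'$. This is the ``only if'' part of part~1.

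For ``if'' it suffices, by transitivity of the order $O\leq O'\iff O\subset\overline{O'}$, to realise the covering relations of the poset $P=\{(r,s)\mid s\text{ even},\ 0\le s\le r\le m,\ 2r-s\le 2n\}$. An elementary inspection shows that, besides the obvious covers $(r,s)\to(r+1,s)$ and $(r,s)\to(r,s+2)$ (whenever the right-hand pair is again admissible), there is exactly one further covering relation, namely $(2n-1,2n-2)\to(2n,2n)$, and only when $m\ge 2n$; in particular this last relation is not a composite of moves of the first two kinds. So I only need the three corresponding orbit inclusions.

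For $O_{r,s}\subset\overline{O_{r+1,s}}$: take a representative $x_0\in O_{r,s}$, pick a basis vector of $U$ lying in $\ker x_0$ and send it to $\epsilon g$, where $g\notin\Ima x_0$ is a fixed vector orthogonal to $\Ima x_0$; such $g$ exists precisely because $2(r+1)-s\le 2n$. For $\epsilon\ne 0$ the map has rank $r+1$ with the same restricted symplectic rank $s$, and at $\epsilon=0$ it is $x_0$. For $O_{r,s}\subset\overline{O_{r,s+2}}$: choose $x_0\in O_{r,s+2}$ with $\Ima x_0=P\oplus R$, where $P=\langle e,f\rangle$ is a symplectic plane and $\dim R=r-2$; pick $g\in(R+\langle e\rangle)^{\perp}$ with $g\notin\Ima x_0$ — here $(R+\langle e\rangle)^{\perp}$ fails to lie in $\Ima x_0$ exactly because $2r-s\le 2n$ — and modify $x_0$ so that the image vector $f$ is replaced by $\epsilon f+g$. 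For $\epsilon\ne 0$ the image still has dimension $r$ and restricted symplectic rank $s+2$, while at $\epsilon=0$ the plane $P$ degenerates to the isotropic plane $\langle e,g\rangle$, dropping the rank to $s$. Finally, for $O_{2n-1,2n-2}\subset\overline{O_{2n,2n}}$ (only when $m\ge 2n$): take $x_0$ surjective, fix a symplectic basis $e_1,f_1,\dots,e_n,f_n$ of $W$, and replace the column of $x_0$ mapping onto $f_n$ by its $\epsilon$-multiple; for $\epsilon\ne 0$ the map is still surjective, and at $\epsilon=0$ the image is the hyperplane $\langle e_1,f_1,\dots,e_{n-1},f_{n-1},e_n\rangle$, on which $\Omega$ has rank $2n-2$. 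In each case the claimed ranks of $x_\epsilon$ and of $\transpose{x_\epsilon}\Omega x_\epsilon$ are read off directly from the explicit form.

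Part~2 requires no separate argument: the geometric equivalence $x\mapsto x^{*}$ recalled just before the lemma sends $O_{rs}$ to $Q_{2n-r,\,r-s}$, so substituting $k=2n-r$, $t=r-s$ turns the inequalities $r\le r'$, $s\le s'$ of part~1 into $k\ge k'$ and $k+t\ge k'+t'$, which is exactly the asserted order on the $Q_{kt}$. I expect the only genuinely delicate points to be combinatorial rather than geometric: noticing that $P$ has the single ``non-decomposable'' covering relation $(2n-1,2n-2)\to(2n,2n)$, and the small linear-algebra verifications that the auxiliary vectors $g$ exist — both of which come down to the admissibility inequality $2r-s\le 2n$.
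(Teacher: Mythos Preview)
Your argument is correct and follows essentially the same strategy as the paper: lower semicontinuity of $\rk x$ and $\rk(\transpose{x}\Omega x)$ for the ``only if'' direction, explicit one-parameter families for the covering relations, and the geometric equivalence $x\mapsto x^{*}$ to transport part~1 to part~2.

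Two differences are worth recording. First, your degeneration realising $O_{r,s}\subset\overline{O_{r,s+2}}$ is organised differently: you start from $x_0\in O_{r,s+2}$ and let a symplectic plane $P=\langle e,f\rangle$ in its image collapse to an isotropic plane $\langle e,g\rangle$, whereas the paper starts from $x\in O_{r,s}$ and perturbs inside the Lagrangian complement $L'$ via an explicit formula. Both work; yours is cleaner, though it tacitly uses that one may take $P\perp R$ (immediate from a symplectic basis of a complement to the radical of $\Omega|_{\Ima x_0}$), which you should state.

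Second, and more interestingly, your combinatorial analysis is sharper than the paper's. You correctly note that when $m\ge 2n$ the poset has one covering relation, $(2n-1,2n-2)\lessdot(2n,2n)$, that is \emph{not} a composite of the two basic moves, since neither $(2n,2n-2)$ nor $(2n-1,2n)$ is admissible. The paper's proof only constructs the two families $x_1(\tau)\in O_{r+1,s}$ and $x_2(\tau)\in O_{r,s+2}$, so strictly speaking it does not treat this boundary case; your third family (scaling one column of a surjective $x_0$) supplies exactly what is missing.
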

\begin{proof}
We will justify the first part of the lemma. The second one is merely a reformulation in the context of geometric equivalence. Notice that the necessity of the above inequalities follows from the closedness of the sets $\{\rk x\leq r\}$ and $\{\rk\Omega\vert_{\Ima x}\leq s\}$ (the latter being given by a closed condition $\rk\transpose{x}\Omega x\leq s$ in matrix terms).

Take $x\in O_{rs}$. We find two families $x_1(\tau)\in O_{r+1,s}$ and $x_2(\tau)\in O_{r,s+2}$, where $\tau\in\CC^\times$, which approach $x$ as $\tau$ goes to $0$. Let us retain the notation from the decompositions~(\ref{eq: WU_decompositions}).

For $x_1$, pick two arbitrary non-zero vectors $u\in\ker x$ and $w\in M$ (both subspaces are nonzero because otherwise $O_{r+1,s}$ would be empty). Choose a basis of $U$ consistent with the decomposition~(\ref{eq: WU_decompositions}) and containing $u$. We define $x_1(\tau)$ acting on the basis vectors in the same way as $x$ with one exception: $u\mapsto \tau w$. Clearly, the image of this mapping is $(r+1)$-dimensional and $\Omega$ restricts to a form of rank $s$ on it.

For $x_2$, choose a basis $l_i$ in $L$ (note that $\dim L=r-s\geq 2$ as the orbit $O_{r,s+2}$ is assumed to be non-empty), and denote by $l'_i\in L'\cong L^*$ vectors of the dual basis. The restricted map $x_2$ establishes an isomorphism $\overline{x}_2: \Ima x^* \to \Ima x = F\oplus L$, which yields a decomposition
\begin{equation*}
U = \overline{x}_2^{-1}(F)\oplus \overline{x}_2^{-1}(L)\oplus\ker x.
\end{equation*}
Define $x_2(\tau)$ acting on the first and third direct summand in the same way as $x$ and on the basis vectors $u_i=\overline{x}_2^{-1}(l_i)$ of $\overline{x}_2^{-1}(L)$ as follows:
\begin{equation*}
x_2(\tau): u_1\mapsto l_1+\tau l_2+\tau^2 l_2'+\tau l_1', u_2\mapsto l_2+\tau l_2' + \tau^2 l_1', u_i\mapsto x(u_i) \text{ when } i>2.
\end{equation*}
One readily checks that $x_2(\tau)\in O_{r,s+2}$.
\end{proof}

In the following theorem, the notation $\floortwo{N}$ stands for the greatest even integer less than or equal to $N$.

\begin{theorem}\label{thm: Sp_GL}
$O_{rs}^\vee = Q_{kt}$ if and only if $2n-k=\min(2n-r,m-r+\floortwo{r-s})$ and $\floortwo{t}=\floortwo{r-s}$ ($k-t$ is even).
\end{theorem}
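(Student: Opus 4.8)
The plan is to invoke Lemma~\ref{lemma: reformulation}: fix $x\in O_{rs}$ in general position (with the decompositions~\eqref{eq: WU_decompositions} at our disposal), describe the conormal space $N^*_x O_{rs}\subset V^*$, and show that its general point lies in $Q_{kt}$ for the pair $(k,t)$ singled out in the statement; then $O_{rs}^\vee=Q_{kt}$. First I would compute $N^*_x O_{rs}$. The tangent space is $T_x O_{rs}=\{Ax-xB\mid A\in\mathfrak{sp}(W),\,B\in\gl(U)\}$, so since $\pairing{x'}{y}=\tr(x'y)$ the condition $y\in N^*_x O_{rs}$ splits into $\tr(A\,(xy))=0$ for all $A\in\mathfrak{sp}(W)$ together with $\tr((yx)\,B)=0$ for all $B\in\gl(U)$. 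The second condition reads $yx=0$, i.e.\ $\Ima x\subseteq\ker y$; the first one says that $xy$ is a symplectically self-adjoint endomorphism of $W$.

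Combining this with the splitting~\eqref{eq: WU_decompositions}, I would show: a self-adjoint operator $Z$ with $\Ima x\subseteq\ker Z$ and $\Ima Z\subseteq\Ima x$ automatically has $\Ima Z\subseteq L$, is determined by the skew form $w,w'\mapsto\Omega(Zw,w')$ on $W/L^{\perp}\cong L^*$ --- equivalently by an element of $\wedge^2 L$ --- and every such $Z$ arises as $xy$ for some $y$ with $yx=0$; moreover the set of $y$ with $yx=0$ and $xy=Z$ fixed is a coset of $\{y_0\mid\Ima x\subseteq\ker y_0,\ \Ima y_0\subseteq\ker x\}=\Hom(W/\Ima x,\ker x)$. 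Thus a general $y\in N^*_x O_{rs}$ is encoded by a general $\sigma\in\wedge^2 L$ and a general $y_0\in\Hom(W/\Ima x,\ker x)$.

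Next I would read off the invariants of a general $y$. For general $\sigma$ the operator $Z:=xy$ has rank $\floortwo{\dim L}=\floortwo{r-s}$, so $\ker Z\supseteq\Ima x$ has dimension $2n-\floortwo{r-s}$, and self-adjointness gives $(\ker Z)^{\perp}=\Ima Z$. Writing $y$ through $W/\Ima x$ and splitting $U=\Ima x^*\oplus\ker x$, its $\Ima x^*$-component has kernel $\ker Z/\Ima x$ while its $\ker x$-component is the general map $y_0$; intersecting kernels yields $\dim\ker y=k$ with $2n-k=\min(2n-r,\,m-r+\floortwo{r-s})$. Also $\Ima Z\subseteq\Ima x\subseteq\ker y\subseteq\ker Z$ forces $\Ima Z\subseteq\ker y\cap(\ker y)^{\perp}$, whence $t:=\dim\ker(\Omega\vert_{\ker y})\geq\floortwo{r-s}$, and $t\equiv k\pmod 2$ since $\rk(\Omega\vert_{\ker y})$ is even; hence $t\geq\floortwo{r-s}+(k\bmod 2)$. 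It remains to see this bound is attained by a general $y$. By Lemma~\ref{lemma: partial_order}(2) the orbit $Q_{k,t'}$ grows as $t'$ decreases (with $k$ fixed), so the general orbit met by $N^*_x O_{rs}$ --- namely $O_{rs}^\vee$ --- is the one with the smallest $t'$ that occurs; thus it suffices to exhibit one $y\in N^*_x O_{rs}$ with $\dim\ker y=k$ and $\dim\ker(\Omega\vert_{\ker y})=\floortwo{r-s}+(k\bmod 2)$. For this I would, with $\sigma$ general, produce inside $\ker Z$ a subspace containing $\Ima x$, of dimension $k$, of minimal $\Omega$-corank: passing to $\ker Z/\Ima Z$ (symplectic, with the image of $\Ima x$ of $\Omega$-corank $\le 1$), one first enlarges that image to an even-dimensional non-degenerate subspace and then to the required dimension with corank $\le 1$ --- which fits because $r\le m$ and $2r-s\le 2n$ --- and any such subspace is realized as $\ker y$ for a suitable $y_0$. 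Setting $t:=\floortwo{r-s}+(k\bmod 2)$ one has $\floortwo t=\floortwo{r-s}$ and $k-t$ even, so Lemma~\ref{lemma: reformulation} gives $O_{rs}^\vee=Q_{kt}$, and $(k,t)$ is the unique pair with these properties.

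The hard part will be the symplectic linear algebra: pinning down $N^*_x O_{rs}$ as the family of pairs $(\sigma,y_0)$, and then the genericity-and-extension argument for $\Omega\vert_{\ker y}$, where the floors $\floortwo{\cdot}$ enter precisely because a general alternating form on an odd-dimensional space is degenerate.
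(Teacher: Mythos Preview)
Your proposal is correct and follows essentially the same route as the paper: characterize $N^*_x O_{rs}$ by the conditions $yx=0$ and $xy$ symplectically self-adjoint, extract from this a skew-symmetric object on a space of dimension $r-s$ (your $\sigma\in\wedge^2 L$, the paper's $\bar y z^{-1}$ on $x^*L'$), and read off the generic $(k,t)$ from the resulting rank bounds. The only difference is cosmetic---you parametrize the conormal space invariantly by $(\sigma,y_0)$ while the paper writes out the block matrix of $y$ relative to the decomposition $L'\oplus M\to x^*L'\oplus\ker x$; both encodings yield the identical inequality $\rk y\le\min(2n-r,\,m-r+\floortwo{r-s})$ and the same lower bound $t\ge\floortwo{r-s}$, with the attainment step handled in each case by a short symplectic linear-algebra construction.
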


\begin{proof}
Elements $y\in N^*_x O_{rs}\subset V^*$ are distinguished by the equations $\tr(y\xi x- yx\eta)=0$ for any $\xi\in\mathfrak{sp}(W)$ and $\eta\in\gl(U)$, or, equivalently, by the conditions $xy\perp_{\tr}\mathfrak{sp}(W)$ and $yx=0$. The algebra $\mathfrak{sp}(W)$ is the Lie algebra of $\Omega$-skew-symmetric operators, and its $\tr$-orthogonal complement is the space of $\Omega$-symmetric operators. In particular, $\symplprod{xy(w_1)}{w_2}=\symplprod{w_1}{xy(w_2)}$ for any $w_i\in W$. We will make use of this property in the form
\begin{equation}
\label{eq: orth_symplectic}
\dotprod{y w_1}{x^* w_2}=-\dotprod{x^* w_1}{y w_2}, \forall w_i,
\end{equation}
which is easily obtained from the definitions.

Finally, we are ready to calculate the dimensions of $\ker y$ and $\ker\Omega\vert_{\ker y}$, for general $y\in N^*_x O_{rs}$. To begin, note that the equality $yx=0$ implies that $\Ima x\subset \ker y$. Also, by taking $w_2=x u$ in~(\ref{eq: orth_symplectic}) one gets $\Ima y\subset (\Ima x^*x)^\perp=x^*L'\oplus\ker x$. Therefore,
\begin{equation*}
(y:L'\oplus M\to x^* L'\oplus\ker x) =
\begin{tabular}{c|c|c|}
\multicolumn{1}{c}{} & \multicolumn{1}{c}{$L'$} & \multicolumn{1}{c}{$M$} \\
\cline{2-3}
$x^*L'$ & $\overline{y}$ & $0$ \\
\cline{2-3}
$\ker x$ & $*$ & $*$ \\
\cline{2-3}
\multicolumn{1}{c}{} & \multicolumn{1}{c}{} & \multicolumn{1}{c}{} \\
\end{tabular}\,,
\end{equation*}
where the inclusion $y(M)\subset y(\ker x^*)\subset \ker x$ stems from~(\ref{eq: orth_symplectic}) as well, and the map $\overline{y}:L'\to x^*L'$ is obtained from $y$ by restriction to $L'$ and projection onto $x^*L'$. It follows from the same equality that the composition $\overline{y}z^{-1}$ is skew-symmetric with respect to the form $\dotprod{\cdot}{\cdot}$, where $z$ denotes the restriction $x^*\vert_{L'}$ that establishes a bijection between $L'$ and $x^*L'$. Consequently,
\begin{equation*}
\rk y\leq\min(\dim(L'\oplus M),\floortwo{\dim x^*L'}+\dim\ker x)=\min(2n-r,\floortwo{r-s}+m-r).
\end{equation*}

Now, for a sufficiently general member $y\in N^*_xO_{rs}$ the above inequality turns into an equality. By the second part of Lemma~\ref{lemma: partial_order}, for fixed $k$ one should look for an orbit $Q_{kt}\ni y$ minimizing the value of $t=\dim\ker\Omega\vert_{\ker y}$. Generally, there are two cases depending on the value of the minimum in the above inequality. If the rank $\rk y=2n-k$ equals the first argument of the minimum, then $k=r$, which means $\ker y = \Ima x$, and $t = r-s$.

If the rank $\rk y$ equals the second argument of the minimum, then the rank of $\overline{y}$ has to be the maximum possible. For a sufficiently general $y$  we may assume that the block $M\to\ker x$ is of the maximal rank. When $r$ is even, the operator $\overline{y}$ is non-degenerate, and $\ker y = \Ima x\oplus(\ker y\cap M)$, whence $\rk\Omega\vert_{\ker y}=k-t\leq s + k-r$. Thus, $t\geq r-s$. When $r$ is odd, we have $\rk\overline{y}=r-s-1$, and there exists a unique (up to proportionality) vector $l'\in L'$ with $\overline{y}l'=0$. Therefore $yl'\in\ker x$, and, as $\ker x=y(M)$, there exists a vector $w$ in $\ker y$ of the form $l' + m$, where $m\in M$. Then $\ker y = (\Ima x\oplus \CC w)\oplus(\ker y\cap M)$, and $k-t\leq (s+2) + (k-r-1)$, hence $t\geq r-s-1=\floortwo{r-s}$. Depending on the parity of $k$, the minimum possible value of $t$ is either $\floortwo{r-s}$, or $\floortwo{r-s}+1$. Anyway, $\floortwo{t}=\floortwo{r-s}$, so one unambiguously determines $t$ depending on the parity of $k-t$.

We conclude the proof by Lemma~\ref{lemma: reformulation}.
\end{proof}

By identifying geometrically equivalent orbits we claim:
\begin{corollary}
\label{cor:  Sp_GL}
The orbits $O_{r_1 s_1}$ and $O_{r_2 s_2}$ are dual to each other if and only if $\floortwo{r_1-s_1}=\floortwo{r_2-s_2}=:c$ and $r_1+r_2=\min(2n,m+c)$.
\end{corollary}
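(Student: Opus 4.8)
The plan is to deduce the corollary directly from Theorem~\ref{thm: Sp_GL} by rewriting the statement ``$O_{rs}^\vee=Q_{kt}$'' in terms of the geometric equivalence $x\mapsto x^*$ between $V$ and $V^*$. Recall from the discussion preceding Lemma~\ref{lemma: partial_order} that under this equivalence the orbit $Q_{kt}\subset V^*$ is identified with $O_{2n-k,\,2n-k-t}\subset V$. So, starting from $O_{r_1s_1}^\vee=Q_{kt}$ and putting
\begin{equation*}
r_2=2n-k,\qquad s_2=2n-k-t\quad(\text{equivalently } t=r_2-s_2),
\end{equation*}
the orbit dual to $O_{r_1s_1}$, regarded inside $V$, is exactly $O_{r_2s_2}$.

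It then remains to substitute. The condition $\floortwo{t}=\floortwo{r_1-s_1}$ of Theorem~\ref{thm: Sp_GL} reads $\floortwo{r_2-s_2}=\floortwo{r_1-s_1}$; denote this common value by $c$. The condition $2n-k=\min(2n-r_1,\,m-r_1+\floortwo{r_1-s_1})$ becomes $r_2=\min(2n-r_1,\,m-r_1+c)$, and since $\min$ commutes with subtracting a constant this is equivalent to $r_1+r_2=\min(2n,\,m+c)$. This yields both implications at once: if $O_{r_1s_1}$ and $O_{r_2s_2}$ are dual then the two conditions of the corollary hold, and conversely, if they hold then $(k,t)=(2n-r_2,\,r_2-s_2)$ satisfies the hypotheses of Theorem~\ref{thm: Sp_GL} characterizing $O_{r_1s_1}^\vee$, whence $O_{r_1s_1}^\vee=Q_{kt}=O_{r_2s_2}$.

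Two points of bookkeeping finish the argument. First, the parity clause ``$k-t$ is even'' of Theorem~\ref{thm: Sp_GL} translates into ``$s_2=2n-(k+t)$ is even'', which is precisely the admissibility requirement for $O_{r_2s_2}$ in Lemma~\ref{lemma: orbits_Sp_GL}; likewise the inequalities $t\le k$, $t\le 2n-k\le m$ cutting out the $Q$-orbits turn into $2r_2-s_2\le 2n$ and $0\le s_2\le r_2\le m$, so $(r_2,s_2)$ is indeed an admissible orbit index. Second, the resulting condition is manifestly symmetric in $(r_1,s_1)$ and $(r_2,s_2)$, as it must be since Pyasetskii duality is an involution; one may also check this directly by feeding $(r_2,s_2)$ back into Theorem~\ref{thm: Sp_GL}, the parity clause pinning down $s_1$ uniquely among the two candidates $r_1-c$, $r_1-c-1$. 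There is essentially no obstacle here beyond keeping the floor and parity conditions aligned under the change of notation: all the real content is already contained in Theorem~\ref{thm: Sp_GL} and in Lemma~\ref{lemma: reformulation}.
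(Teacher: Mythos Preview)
Your proof is correct and takes essentially the same approach as the paper: the paper itself offers no proof beyond the clause ``By identifying geometrically equivalent orbits we claim'', and your argument is precisely that identification spelled out---substituting $r_2=2n-k$, $t=r_2-s_2$ into the conditions of Theorem~\ref{thm: Sp_GL} and simplifying. The bookkeeping on parity and admissibility is accurate.
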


\section{Proof of Theorem~\ref{thm: main_theorem}b}
\label{s: proof_for_reducible}

Suppose that $G : V=V_1\oplus V_2$ is a representation from Table~\ref{table: reducible_modules}. Then each $V_i$ is an irreducible spherical representation of the group $G$ (see Remark~\ref{rm: sphericity_for_submodules}). By part (a) of the main theorem, we may assume the duality for orbits in these spaces to be known.

Let $O\subset V_1$, $O'\subset V_2$ be $G$-orbits. Denote by $Y(O,O')$ the open $G$-orbit in $O\times O'$. In this section such orbits are referred to as \textit{apparent}. Lemma~\ref{lemma: duality_for_reducible} shows that $Y(O,O')^\vee=Y(O^\vee,(O')^\vee)$. Clearly, all the apparent orbits are put in duality in this way. In all the cases, except for the cases 1--5, there is only one non-apparent orbit, which is inevitably self-dual.

The representation $\Sp_{2n}\times(\CC^\times)^2 :\CC^{2n}\oplus\CC^{2n}=V$ in the 5th row is a restriction of the representation $\GL_{2n}\times(\CC^\times)^2 : V$ to the subgroup. The orbits of the bigger group are $\{0\}$, $O_1$, $O'_1$, $Z_\sim$ and $Z^\circ\cup Y$. Therefore, we have $\{0\}^\vee=Z^\circ\cup Y$, $O_1^\vee=O'_1$, and the orbit $Z_\sim$ is self-dual. Lemma~\ref{lemma: duality_for_homomorphism} applied to the inclusion $\Sp_{2n}\times(\CC^\times)^2\to\GL_{2n}\times(\CC^\times)^2$ puts all the orbits in duality, except for $Z^\circ$, which has to be self-dual.

\bigbreak
\noindent
\textbf{Representations 1--4.}

We work with the representations (B1)--(B4), \S\,\ref{ss: orbits_description}; they are equivalent to the corresponding ones in Table~\ref{table: reducible_modules}. In these four cases only the non-apparent orbits $Z_i$ remain to be worked out. In the sequel, $e_k$ and $e^k$ denote vectors of dual bases of the related spaces.

\bigbreak
\noindent
\textbf{(B\ref*{tablecase: red_GLxC})}
Here $\GL_n : V=\wedge^2\CC^n\oplus\CC^n$; $V^*=\wedge^2\CC^{n*}\oplus\CC^{n*}$. We identify $\wedge^2\CC^n$ and $\wedge^2\CC^{n*}$ with the spaces of skew-symmetric $(n\times n)$-matrices, and the pairing is given by $\pairing{x'}{x}=\tr(x'x)$; elements of $\CC^n$ are represented by columns, and elements of $\CC^{n*}$ are represented by rows; $\pairing{y'}{y}=y' y=\tr(y y')$. An element $\xi\in\gl_n$ acts on an element $x\in\wedge^2\CC^n$ by the formula $\xi\cdot x=\xi x + x \transpose{\xi}$.

Fix an element $z_0=(x_0,y_0)\subset Z_i$, that is, $\rk x_0=2i, y_0\neq 0$ and $y_0\perp\ker x_0$. Elements $(x',y')\in N^*_{z_0} Z_i$ are characterized by the equation $\pairing{x'}{\xi\cdot x_0} + \pairing{y'}{\xi y_0}=0$ for all $\xi\in\gl_n$. The first term is equal to $\tr(x'(\xi x_0 + x_0\transpose{\xi}))=2\tr(x_0 x'\xi)$ since $x_0$ and $x'$ are skew-symmetric. Thus, $2\tr(x_0 x'\xi)+\tr(y_0 y' \xi)=0$ for all $\xi\in\gl_n$. Finally,
\begin{equation*}
2 x_0 x' + y_0 y' = 0.
\end{equation*}
It is easy to check that in suitable coordinates:

\begin{equation*}
x_0 =
\begin{tabular}{|c|c|}
\hline
$\Omega$ & \,$0$\, \\
\hline
\,$0$\, & $0$ \\
\hline
\end{tabular}\,,
y_0 = e_1;
x' =
\begin{tabular}{|c|c|c|}
\hline
$0$ & \;$A$\; \\
\hline
\!$-\transpose{A}$\! & $*$ \\
\hline
\end{tabular}\,,
y' =
\begin{tabular}{|c|c|}
\hline
$0$ & $-2\alpha$ \\
\hline
\end{tabular}\,,
\end{equation*}

\begin{equation*}
\text{where }
\Omega =
\begin{tabular}{|c|c|}
\hline
$0$ & \,$J$\, \\
\hline
\!$-J$\! & $0$ \\
\hline
\end{tabular}\,,
A = 
\begin{tabular}{|c|}
\hline
$0$ \\
\hline
$\alpha$ \\
\hline
\end{tabular}\,,
\alpha \in (\CC^{n-2i})^*, 
\end{equation*}
and $J$ is an $(i\times i)$-matrix with $1$ on the secondary diagonal and $0$ elsewhere. Clearly, we have $\rk x' \leq\floortwo{n-2i+1}$.

The other cases are considered similarly. Below, it is assumed that $z_0=(x_0,y_0)\in Z_i$ and that $(x',y')\in N^*_{z_0}Z_i$. We provide an equation for $x_0,y_0, x'$ and $y'$, and express these elements by matrices in suitable coordinates.

\noindent
\textbf{(B\ref*{tablecase: red_GLxC*})}
$\GL_n : V=\wedge^2\CC^n\oplus\CC^{n*}$; $V^*=\wedge^2\CC^{n*}\oplus\CC^{n}$; $2x_0 x'-y' y_0=0$;
\begin{equation*}
x_0 =
\begin{tabular}{|c|c|}
\hline
$\Omega$ & \,$0$\, \\
\hline
\,$0$\, & $0$ \\
\hline
\end{tabular}\,,
y_0= e^n;
x' =
\begin{tabular}{|c|c|c|}
\hline
$0$ & \;$A$\; \\
\hline
\!$-\transpose{A}$\! & $*$ \\
\hline
\end{tabular}\,,
y' =
\begin{tabular}{|c|}
\hline
$2\Omega\alpha$ \\
\hline
$0$ \\
\hline
\end{tabular}\,,
\text{where }
A =
\begin{tabular}{|c|c|}
\hline
$0$ & $\alpha$ \\
\hline
\end{tabular}\,,
\alpha\in\CC^{2i}.
\end{equation*}
Therefore, $\rk x' \leq \floortwo{n-2i+1}$.

\noindent
\textbf{(B\ref*{tablecase: red_GLxGL})}
$\GL_q\!\times\!\GL_p \!:\! V\!=\!\Hom(\CC^p,\CC^q)\oplus\CC^{p*}$; $V^*\!=\!\Hom(\CC^q,\CC^p)\oplus\CC^{p}$; $x_0 x'\!=\!0$, $x' x_0+y' y_0\!=\!0$;
\begin{equation*}
x_0 =
\begin{tabular}{|c|c|}
\hline
$E$ & \,$0$\, \\
\hline
\,$0$\, & $0$ \\
\hline
\end{tabular}\,,
y_0 = e^i;
x' =
\begin{tabular}{|c|c|}
\hline
\,$0$\, & \,$0$\, \\
\hline
$A$ & $*$ \\
\hline
\end{tabular}\,,
y' =
\begin{tabular}{|c|}
\hline
$0$ \\
\hline
$-\alpha$ \\
\hline
\end{tabular}\,,
\text{where }
A =
\begin{tabular}{|c|c|}
\hline
$0$ & $\alpha$ \\
\hline
\end{tabular}\,,
\alpha\in\CC^{p-i}.
\end{equation*}
Therefore, $\rk x'\leq\min(p-i,q-i+1)$.

\noindent
\textbf{(B\ref*{tablecase: red_GLxGL*})}
$\GL_q\!\times\!\GL_p \!:\! V\!=\!\Hom(\CC^p,\CC^q)\oplus\CC^p$; $V^*\!=\!\Hom(\CC^q,\CC^p)\oplus\CC^{p*}$; $x_0 x'\!=\!0, x' x_0\!=\!y_0 y';$
\begin{equation*}
x_0 =
\begin{tabular}{|c|c|}
\hline
$E$ & \,$0$\, \\
\hline
\,$0$\, & $0$ \\
\hline
\end{tabular}\,,
y_0 = e_p;
x' =
\begin{tabular}{|c|c|}
\hline
\,$0$\, & \,$0$\, \\
\hline
$A$ & $*$ \\
\hline
\end{tabular}\,,
y' =
\begin{tabular}{|c|c|}
\hline
$\alpha$ & $0$ \\
\hline
\end{tabular}\,,
\text{Where }
A =
\begin{tabular}{|c|}
\hline
$0$ \\
\hline
$\alpha$ \\
\hline
\end{tabular}\,,
\alpha\in\CC^{i*}.
\end{equation*}
Therefore, $\rk x'\leq\min(p-i,q-i+1)$.

\bigbreak
By Lemma~\ref{lemma: reformulation}, in all the above cases, the $G$-orbit $Q$ of an element $(x',y')\in N^*_{z_0}Z_i$, with $x'$ having the maximum possible rank $j$ and $y'\neq 0$, is dual to $Z_i$. Clearly, under the identification of $V$ and $V^*$, the subsets $Q$ and $Z_j$ represent the same orbit. The proof is concluded by examining the cases of even/odd $n$ and order relation between $p$ and $q$.

\end{document}